\newtheorem{thm}{Theorem}[section]
\newtheorem{lma}[thm]{Lemma}
\newtheorem{cor}[thm]{Corollary}
\newtheorem{prop}[thm]{Proposition}
\theoremstyle{definition}
\newtheorem{defn}[thm]{Definition}
\newtheorem{rem}[thm]{Remark}
\newcommand{\R}{\mathbb{R}}
\newcommand{\N}{\mathbb{N}}
\newcommand{\Z}{\mathbb{Z}}
\newcommand{\C}{\mathbb{C}}
\newcommand{\SL}{\mathrm{SL}(2,\mathbb{R})}
\renewcommand{\H}{\mathbb{H}}
\renewcommand{\P}{\mathbb{P}}
\providecommand{\norm}[1]{\lVert#1\rVert}
\newcommand{\I}{\mathcal{I}}
\newcommand{\A}{\mathcal{A}}
\renewcommand{\i}{\mathtt{i}}
\renewcommand{\j}{\mathtt{j}}
\newcommand{\hd}{\dim_\textup{H}}
\newcommand{\att}{\Lambda^+}
\newcommand{\rep}{\Lambda^-}
\renewcommand{\epsilon}{\varepsilon}
\renewcommand{\geq}{\geqslant}
\renewcommand{\leq}{\leqslant}
\title{The Hausdorff dimension of self-projective sets}
\author{Argyrios Christodoulou} \address{Department of Mathematics, University of Surrey, Guildford, GU2 7XH}
\email{a.christodoulou@surrey.ac.uk}
\author{Natalia Jurga} \address{Mathematical Institute, University of St Andrews, Scotland, KY16 9SS}
\email{naj1@st-andrews.ac.uk}
\begin{document}

 \maketitle

\begin{abstract}
Given a finite set $\A \subseteq \SL$ we study the dimension of the attractor $K_\A$ of the iterated function system induced by the projective action of $\A$. In particular, we generalise a recent result of Solomyak and Takahashi by showing that the Hausdorff dimension of $K_\A$ is given by the minimum of 1 and the critical exponent, under the assumption that $\A$ satisfies certain discreteness conditions and a Diophantine property. Our approach combines techniques from the theories of iterated function systems and M\"obius semigroups, and allows us to discuss the continuity of the Hausdorff dimension, as well as the dimension of the support of the Furstenberg measure.
\\ \\
\emph{Mathematics Subject Classification} 2010:  primary: 37C45, 37D20   secondary: 28A80, 37D35
\\ \\
\emph{Key words and phrases}: projective space, iterated function system, Hausdorff dimension, semigroups,  M\"obius transformations
\end{abstract}

\section{Introduction}

This article studies the action of $\SL$ matrices on one-dimensional real projective space $\R\P^1$ induced by the linear action of $\SL$ on $\R^2$. We identify $\R\P^1$ with the interval $(0,\pi]$, with the endpoints identified, and for a matrix $A\in\SL$, we denote the induced projective map by $\phi_A : \R\P^1 \to \R\P^1$ (for more information we refer to \cite[Section 2]{hochman_sol}). 
Given a finite set $\A \subseteq \SL$ we call the collection of maps $\Phi_\A\vcentcolon=\{\phi_A\}_{A\in\A}$ a projective iterated function system (projective IFS). Iterated function systems on real projective space have been previously studied in \cite{bv,deleo1, deleo2, solomyak} and on complex projective space in \cite{vince}. 

Recall that a non-identity matrix $A\in\SL$ is called \emph{elliptic} if $\lvert\mathrm{tr}(A)\rvert<2$, \emph{parabolic} if $\lvert\mathrm{tr}(A)\rvert=2$ and \emph{hyperbolic} if $\lvert\mathrm{tr}(A)\rvert>2$. The corresponding projective map $\phi_A$ will be called elliptic (resp. parabolic or hyperbolic) if the matrix $A$ is elliptic (resp. parabolic or hyperbolic). A parabolic map $\phi_A$ has a unique fixed point $x_0$ in $\R\P^1$, where $\phi_A'(x_0)=1$. A hyperbolic map has two fixed points; an attracting $a$ where $\phi_A'(a)<1$, and a repelling $r$ where $\phi_A'(r)>1$. On the other hand, elliptic maps are non-trivial rotations of $\R\P^1$ and have no fixed points. We denote by $\langle \Phi_\A \rangle$ the semigroup generated by $\Phi_\A$.

\begin{defn}\label{attdef}
For a set $\A\subseteq \SL$ we define $K_\A\subseteq \R\P^1$ to be the smallest closed set containing all attracting fixed points of hyperbolic maps in $\langle \Phi_\A \rangle$ and all unique fixed points of parabolic maps in $\langle \Phi_\A \rangle$.  The set $K_\A$, if non-empty, is called the \emph{attractor} of $\Phi_\A$.
\end{defn}
In principle, $K_\A$ may be empty or the whole of $\R\P^1$. The attractor will be the main object of study in this paper and our main result is a generalisation of a recent result of Solomyak and Takahashi  \cite[Theorem~1.7]{solomyak} concerning its Hausdorff dimension. Before stating our main result, we introduce the two key assumptions that we will need to make on our set $\A$.

If $\A=\{A_i\}_{i \in \I} \subseteq \SL$, we denote by $\A^n$ all products of $n$ matrices from $\A$ and by $\A^*\vcentcolon=\bigcup_{n=1}^\infty \A^n$ the semigroup generated by $\A$. 

\begin{defn}
A set $\A\subseteq \SL$ is called \emph{semidiscrete} if $\mathrm{Id}\notin\overline{\A^*}$, where the closure is taken in $\SL$.
\end{defn}
The term ``semidiscrete" was introduced in \cite{jasho}, where Jacques and Short used a slightly weaker definition (our semidiscrete sets are called ``semidiscrete and inverse-free" in the language of \cite{jasho}). Note that semidiscrete sets can contain parabolic and hyperbolic matrices, but not elliptic matrices. We are later going to see that the semidiscrete property of $\A$ is enough to guarantee that $\Phi_\A$ exhibits some contractive properties on $\R\P^1$.

Next we introduce the ``Diophantine property" which appears in \cite[Definition 1.1]{solomyak} in the following form.
\begin{defn}
Let $\A=\{A_i\}_{i \in \I} \subseteq \SL$ be finite, and let $d$ be a left-invariant Riemannian metric in $\SL$. We say that $\A$ is \emph{Diophantine} if there exists $c>0$ such that for all $n \in \N$, if $\i, \j \in \I^n$ and $\i \neq \j$, we have
\[
d(A_\i,A_\j)>c^n.
\]
\end{defn}
Note that the Diophantine property is independent of the choice of left-invariant Riemannian metric $d$, see \cite[\S 2.3]{hochman_sol}. It is clear that if $\A$ is Diophantine then it generates a free semigroup. In \cite{hochman_sol,solomyak} the authors also consider a weaker version of this definition with the freeness of $\A^*$ removed, but we do not require this more general form.

Given a finite or countable set $\A \subseteq \SL$, define its \emph{zeta function} $\zeta_\A:[0,\infty) \to \R \cup \{\infty\}$ by
\begin{equation}
\label{zeta}
\zeta_\A(s)\vcentcolon= \sum_{n=1}^\infty \sum_{A \in \A^n} \norm{A}^{-2s}
\end{equation}
and its \emph{critical exponent}
\begin{equation}
\label{crit}
\delta_\A\vcentcolon= \inf\{s>0 \; : \; \zeta_\A(s)<\infty\}.
\end{equation}
If $\zeta_\A(s)$ diverges for all $s \geq 0$, we define $\delta_\A=\infty$. 

We are ready to state the main result of this paper.
\begin{thm}\label{MAIN}
Let $\A$ be a finite subset of $\SL$ that is Diophantine and semidiscrete. If $K_\A$  is not a singleton, then
\begin{equation} \label{dimformula}
\hd(K_\A)=\min\{1,\delta_\A\}.
\end{equation}
\end{thm}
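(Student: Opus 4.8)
The plan is to establish the two bounds $\hd(K_\A)\le\min\{1,\delta_\A\}$ and $\hd(K_\A)\ge\min\{1,\delta_\A\}$ separately, the upper bound requiring only semidiscreteness and the lower bound using in addition the Diophantine property.

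The common first step is to convert the algebraic data of $\A$ into metric geometry on $\R\P^1$. The basic tool is the cocycle identity $\phi_A'(x)=\norm{w}^2/\norm{Aw}^2$ for a vector $w$ spanning the line $x$, which makes $\norm{A_\i}^{-2}$ the natural proxy for the contraction of $\phi_\i$. Combining this with the structure theory of semidiscrete M\"obius semigroups (Jacques--Short), I would show that semidiscreteness of $\A$ provides a nonempty compact forward-invariant set $L\subsetneq\R\P^1$ (so $\phi_i(L)\subseteq L$ for each generator, $K_\A\subseteq L$, and the nested family $\{\phi_\i(L)\}_{\i\in\I^n}$ covers $K_\A$) together with a constant $C$ such that $\operatorname{diam}\phi_\i(L)\le C\norm{A_\i}^{-2}$ for every finite word $\i$. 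Proving this comparison is, in my view, one of the two delicate points: one must exclude expansion of $\phi_\i$ on $L$ --- this is where the absence of elliptic elements and semidiscreteness keep $L$ off the repelling directions of long words --- and one must accommodate the merely neutral behaviour of $\Phi_\A$ near parabolic fixed points.

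Granting the comparison, the upper bound is immediate. For $s>\delta_\A$ the double series defining $\zeta_\A(s)$ converges, so
\[
\sum_{\i\in\I^n}\bigl(\operatorname{diam}\phi_\i(L)\bigr)^s\le C^s\sum_{A\in\A^n}\norm{A}^{-2s}\longrightarrow 0\qquad(n\to\infty),
\]
whence $\mathcal H^s(K_\A)=0$ and $\hd(K_\A)\le s$; letting $s\downarrow\delta_\A$ and recalling $K_\A\subseteq\R\P^1$ gives $\hd(K_\A)\le\min\{1,\delta_\A\}$.

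For the lower bound, fix $s<\min\{1,\delta_\A\}$. The strategy is to find inside $\langle\Phi_\A\rangle$ a finite family $F$ of uniformly contracting maps that freely generates a sub-IFS with critical exponent $\delta_F>s$, and then to invoke a dimension theorem for exponentially separated systems. Since $\A^*$ is free (a consequence of the Diophantine property) and $K_\A$ is not a singleton --- which, there being no elliptic elements, rules out the degenerate case in which the whole semigroup fixes a common point and consists of parabolics --- such an $F$ can be extracted from the divergence of $\zeta_\A$ at $s$; some care is needed because this divergence may be witnessed only ``at infinity'' by arbitrarily long parabolic blocks, so one assembles $F$ from hyperbolic words together with bounded-length parabolic blocks and lets the bound on the block length grow. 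The Diophantine property of $\A$ then transfers to exponential separation of the cylinders of $\Phi_F$; this is the second delicate point, since the projective action can collapse far-apart matrices, so the separation has to be read at base points inside the good set $L$. Finally $\Phi_F$ is a finite, uniformly contracting, exponentially separated conformal (M\"obius) IFS on a subinterval of $\R\P^1$, so by the conformal analogue of Hochman's theorem on exponential separation --- the tool used by Solomyak and Takahashi in the hyperbolic case --- we get $\hd(K_F)=\min\{1,\delta_F\}\ge\min\{1,s\}=s$; since $K_F\subseteq K_\A$ and $s<\min\{1,\delta_\A\}$ was arbitrary, we are done. I expect the extraction of a good finite $F$, together with the transfer of the Diophantine condition to exponential separation in the projective setting, to be the principal obstacle.
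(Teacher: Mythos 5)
Your plan has the right large-scale shape --- dominate $K_\A$ by a family of cylinders for the upper bound, and exhaust $\delta_\A$ from below by uniformly hyperbolic Diophantine subsystems for the lower bound --- but the specific estimate on which your upper bound rests is false precisely in the new regime the theorem is meant to cover, and the lower bound elides the one genuinely hard identity that makes the exhaustion work.

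\textbf{The upper bound.} You posit a compact forward-invariant $L\supseteq K_\A$ with $\operatorname{diam}\phi_\i(L)\leq C\norm{A_\i}^{-2}$ uniformly over all words $\i$. This cannot hold once $\A$ is semidiscrete but not uniformly hyperbolic, because then $K_\A\cap R_\A\neq\emptyset$ (Lemma \ref{uh2}) and the ``bad directions'' $u_{A_\i}^-$ from Lemma \ref{contract} accumulate on $L$. Concretely, if $P\in\A^*$ is parabolic with fixed point $x_0$, then $x_0\in K_\A\subseteq L$ by definition of $K_\A$, and $\phi_{P^n}(L)$ is an interval pinned at $x_0$ of length comparable to $1/n$, whereas $\norm{P^n}^{-2}\asymp 1/n^2$; the inequality you need is off by a factor of $n$, and the discrepancy propagates through longer words $\j P^n$. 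Thus the sum $\sum_{\i\in\I^n}(\operatorname{diam}\phi_\i(L))^s$ is \emph{not} dominated by $C^s\sum_{A\in\A^n}\norm{A}^{-2s}$ at the critical exponent, and the covering argument does not close. This is the essential obstruction, and the paper's way around it is a specific device you do not have: it proves (Proposition \ref{uh-sub}) that there exist $A_0\in\A^*$ and nested intervals $U'\subset U$ with $\phi_{A_0B}(\overline{U})\subset U'$ for \emph{all} $B\in\A^*$, so the post-composed system $\Gamma=\{A_0B\colon B\in\A^*\}$ is uniformly hyperbolic and the distortion estimate (Lemma \ref{key bound}) holds for $\Gamma$ on $U$. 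Since $\phi_{A_0}(K_\A)\subseteq K_\Gamma$, one has $\hd K_\A=\hd K_\Gamma\leq\delta_\Gamma\leq\delta_\A$; the full system $\A$ is never used for the covering.

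\textbf{The lower bound.} Here your strategy (extract a finite uniformly contracting, exponentially separated sub-IFS $F$ with $\delta_F>s$ and apply the Solomyak--Takahashi/Hochman machinery) is close in spirit to the paper's, but two things are skipped. First, the paper's subsystems $\Gamma_n=\{A_0B\colon B\in\A^n\}$ consist of words of a single length in $\A$, so they inherit the Diophantine property immediately; your $F$ is explicitly assembled from words of varying length, for which the transfer of separation is exactly the ``delicate point'' you flag but do not resolve. Second, and more importantly, the step ``such an $F$ can be extracted from the divergence of $\zeta_\A$ at $s$'' is where the real work is. The paper needs the identity $s_\A=\delta_\A$ between the root of the pressure and the critical exponent (Theorem \ref{s=d}), proved by a careful chain of approximations through the auxiliary system $\A_\infty$, precisely because the pressure is what behaves well under taking finite truncations. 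Without some substitute for $s_\A=\delta_\A$ you cannot conclude $\sup_n\delta_{\Gamma_n}=\delta_\A$ (or $\sup\delta_F=\delta_\A$ in your language); the divergence of a zeta function is not finitely witnessed in general.

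\textbf{Reducibility.} Finally, the paper splits into irreducible and reducible cases (Sections \ref{eg} and \ref{irred}), because Proposition \ref{uh-sub} and Lemma \ref{hyp} need irreducibility. In the reducible non-uniformly-hyperbolic case the result is established by hand (Lemmas \ref{para-rep}, \ref{att-rep}: $K_\A$ contains an interval and $\delta_\A\geq 1$). Your sketch rules out the case where the whole semigroup is parabolic, but the reducible case with a common fixed point and a mixture of hyperbolic and parabolic (or two hyperbolics sharing a fixed point) still needs an argument, and it is not obviously absorbed by your extraction scheme.
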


A first observation about Theorem~\ref{MAIN} is that it can be applied to projective IFS containing parabolic maps. In order to properly describe the contribution of Theorem \ref{MAIN}, however, we must first review the literature which is most relevant to our problem.
To this end, we introduce the notion of uniform hyperbolicity following \cite{bv, deleo1, solomyak}.
\begin{defn} \label{uh}
A finite set $\A\subseteq \SL$ is called \emph{uniformly hyperbolic} if there exist real numbers $\lambda>1$ and $c>0$, such that for every $n\in\N$,
\[
\norm{A}\geq c\lambda^n, \quad \text{for all} \quad A\in\A^n.
\]
\end{defn}
A concept similar to uniform hyperbolicity for $\mathrm{GL}(d,\R)$-cocycles was studied by Bochi and Gourmelon in \cite{bogo}. Note that if $\A$ is uniformly hyperbolic, then all the matrices in the semigroup $\A^*$ have to be hyperbolic.

As we mentioned earlier, Theorem~\ref{MAIN} is a generalisation of the following theorem due to Solomyak and Takahashi \cite[Theorem~1.7]{solomyak}.
\begin{thm} \label{sol_thm}
Let $\A \subseteq \SL$ be a finite, uniformly hyperbolic set of matrices which is Diophantine. If $K_\A$ is not a singleton, then
\[
\hd K_\A=\min\{1,\delta_\A\}.
\]
\end{thm}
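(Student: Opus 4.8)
The plan is to establish the two inequalities $\hd(K_\A) \le \min\{1,\delta_\A\}$ and $\hd(K_\A) \ge \min\{1,\delta_\A\}$ separately, following the general strategy of thermodynamic formalism for conformal iterated function systems but adapted to the projective setting. For the upper bound, the key observation is that the derivative of a projective map $\phi_A$ at a point $x\in\R\P^1$ is comparable to $\norm{A}^{-2}$ up to constants that depend on how aligned $x$ is with the contracting direction of $A$. Since $\A$ is semidiscrete, one expects a uniform contraction estimate along admissible words: there should exist constants so that for $\i \in \I^n$, the map $\phi_{A_\i}$ has derivative (in sup norm over a suitable neighbourhood of $K_\A$) bounded above by $C\norm{A_\i}^{-2}$, at least for words whose contracting directions stay away from a bad region. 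First I would cover $K_\A$ by the images $\phi_{A_\i}(V)$ of a fixed neighbourhood $V$ of $K_\A$ over all $\i \in \I^n$, estimate $\mathrm{diam}\,\phi_{A_\i}(V) \lesssim \norm{A_\i}^{-2}$, and sum $\sum_{\i \in \I^n}(\mathrm{diam}\,\phi_{A_\i}(V))^s \lesssim \sum_{A \in \A^n}\norm{A}^{-2s}$; taking $n\to\infty$ and $s>\delta_\A$ gives $\hd(K_\A)\le\delta_\A$, and the bound by $1$ is automatic since $K_\A\subseteq\R\P^1$.

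For the lower bound, the strategy is to construct, for each $s < \min\{1,\delta_\A\}$, a subsystem that behaves like a genuinely contracting conformal IFS satisfying a separation condition, whose attractor has dimension at least $s$, and which is contained in $K_\A$. Concretely, since $\delta_\A > s$, the partial sums $\sum_{A\in\A^n}\norm{A}^{-2s}$ grow, so one can extract a large finite collection $\mathcal{B}\subseteq\A^N$ of long words for suitable $N$; because $\A$ is semidiscrete, the matrices in $\mathcal{B}$ are hyperbolic with strong contraction, and because $\A$ is Diophantine their contracting/repelling directions are quantitatively separated at scale $c^N$. This separation should let me pass to a sub-collection forming an IFS with bounded distortion and the strong separation (or open set) condition, so that a Bowen-type equation / mass distribution principle yields Hausdorff dimension of its attractor $\ge s - o(1)$ as $N\to\infty$. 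Finally one checks this attractor is a subset of $K_\A$: the attracting fixed points of products of maps from $\mathcal{B}$ lie in $K_\A$ by Definition~\ref{attdef}, and they are dense in the sub-attractor. Letting $s\uparrow\min\{1,\delta_\A\}$ finishes it.

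The main obstacle I anticipate is the derivative/distortion control near the ``bad'' directions — unlike in a hyperbolic IFS where all maps uniformly contract a fixed interval, here a projective map $\phi_A$ expands near its repelling fixed point, so one cannot simply iterate on a single interval $V$. The semidiscreteness hypothesis is what must be leveraged to guarantee that, along the relevant words, the repelling directions do not accumulate inside the region carrying $K_\A$; quantifying this — effectively showing that $\overline{\A^*}$ avoids the identity forces a uniform gap between contracting and repelling directions for long products — is the technical heart. The Diophantine condition then upgrades this qualitative separation to a quantitative one at the exponentially small scale $c^N$, which is exactly what is needed to make the sub-collection of $\A^N$-words into a well-separated conformal IFS and to absorb the bounded-distortion errors when comparing $\mathrm{diam}\,\phi_{A_\i}(V)$ with $\norm{A_\i}^{-2}$. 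I would also need to handle the interface with parabolic elements carefully in the general statement, but for Theorem~\ref{sol_thm} the uniform hyperbolicity hypothesis removes parabolics entirely and makes the contraction estimates cleanly uniform, so here the argument should reduce to a fairly standard conformal-IFS dimension computation once the derivative comparison $\phi_{A_\i}' \asymp \norm{A_\i}^{-2}$ is pinned down.
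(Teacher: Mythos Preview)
The paper does not prove Theorem~\ref{sol_thm}: it is quoted verbatim from Solomyak--Takahashi \cite[Theorem~1.7]{solomyak} and used as a black box (most prominently in Proposition~\ref{lowerbound}, where it is applied to the finite uniformly hyperbolic subsystems $\Gamma_n$). So there is no in-paper argument to compare against; I can only assess your sketch on its merits.

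Your upper bound is fine, and is essentially what the paper does carry out in Proposition~\ref{ub} (there for the slightly harder semidiscrete case, via the uniformly hyperbolic subsystem $\Gamma$ and the derivative control of Lemma~\ref{key bound}). In the genuinely uniformly hyperbolic setting the invariant multicone from Theorem~\ref{abybv} gives the bounded-distortion estimate $|\phi_{A_\i}'| \asymp \norm{A_\i}^{-2}$ directly, so the covering argument goes through.

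Your lower bound, however, has a real gap. You write that the Diophantine property implies ``their contracting/repelling directions are quantitatively separated at scale $c^N$'' and that this lets you pass to a sub-IFS satisfying the open set condition. That is not what the Diophantine condition gives: it says $d(A_\i,A_\j)>c^n$ in a left-invariant metric on $\SL$, which does \emph{not} translate into separation of the images $\phi_{A_\i}(V)$ in $\R\P^1$. Two matrices can be $c^n$-apart in $\SL$ while their projective images overlap almost completely (think of composing with a near-identity rotation). In particular one cannot, in general, extract a well-separated conformal sub-IFS, and the ``standard conformal-IFS dimension computation'' you are aiming for is unavailable.

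The actual route in \cite{solomyak} is quite different: one invokes the Hochman--Solomyak dimension formula for Furstenberg measures \cite{hochman_sol}, which says that under the Diophantine hypothesis the stationary measure $\nu_p$ satisfies $\dim\nu_p=\min\{1,\,h_p/(2\chi_p)\}$ (entropy over twice the Lyapunov exponent). Optimising the probability vector $p$ drives $h_p/(2\chi_p)$ up to $s_\A=\delta_\A$, and since each $\nu_p$ is supported on $K_\A$ this yields $\hd K_\A\ge\min\{1,\delta_\A\}$. The Diophantine condition enters not through geometric separation in $\R\P^1$ but through Hochman's entropy-growth / inverse-theorem machinery inside $\SL$, which is precisely what handles overlaps. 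Your sketch bypasses this entirely, and without it the lower bound does not go through.
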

In the case that $\Phi_\A$ (when restricted to an appropriate open subset of $\R\P^1$, such as a multicone, see \S 2) additionally satisfies the open set condition, Theorem \ref{sol_thm} is due to De Leo \cite[Theorem 4]{deleo1}. We also note that a different definition was used for the attractor in \cite{solomyak} (see Section \ref{s2}), but in Theorem \ref{dense} we show that it is equivalent to Definition \ref{attdef}.

To clarify how our result generalises Theorem~\ref{sol_thm}, let us consider the parameter space $\SL^N$ for some positive integer $N$. Also, let $\mathcal{H}$ denote the set of all $N$-tuples that are uniformly hyperbolic (as a slight abuse of notation we use $N$-tuples and subsets of $N$ elements interchangeably). The locus $\mathcal{H}$ was thoroughly investigated by Avila, Bochi and Yoccoz \cite{aby}, as well as Yoccoz \cite{yoccoz}, where they proved various topological properties of $\mathcal{H}$ and raised several questions, many of which remain still open.

Similarly to $\mathcal{H}$, we let $\mathcal{S}$ denote the set of all $N$-tuples that are semidiscrete. It is easy to see that if $\A\subseteq \SL$ is finite and uniformly hyperbolic then it is also semidiscrete. So we have the inclusion $\mathcal{H}\subseteq \mathcal{S}$. More importantly, most $N$-tuples on the boundary of $\mathcal{H}$, also lie in $\mathcal{S}$ \cite[Theorem~5.17]{thesis}. The only exceptions for this statement are certain $N$-tuples that lie on the boundary of specific components, called principal components, but in general these are easy to handle (we refer to \cite[Section 2]{aby} as well as \cite[Section~5.3]{thesis}, for more information). Thus, the parameter space $\mathcal{S}$ is significantly larger than $\mathcal{H}$ and contains boundary $N$-tuples that are normally difficult to work with through the lens of uniform hyperbolicity. As an example, we note the existence of semidiscrete $N$-tuples that lie on the boundary of $\mathcal{H}$ but not on the boundary of any connected component of $\mathcal{H}$ \cite[Example~6.5]{thesis}. In addition, the tuple $\A$ in \cite[Example~6.5]{thesis} is such that the semigroup $\A^*$ contains only hyperbolic matrices, indicating that the differences between the semidiscrete and uniformly hyperbolic properties are more subtle than merely the inclusion of parabolic matrices in the former. We shall return to this example at the end of Section \ref{s2}.

Thus, Theorem \ref{MAIN} allows us to obtain a dimension formula for the attractor of Diophantine tuples on the boundary of most components of $\mathcal{H}$, as well as any tuples that are points of accumulation of components.

Let us now discuss the common ground between Theorems \ref{MAIN} and \ref{sol_thm}, the Diophantine property. It is known that when the matrices in $\A$ have algebraic entries and generate a free semigroup, $\A$ is Diophantine \cite[Proposition 4.3]{hochman_sol}. Solomyak and Takahashi \cite[Theorem 1.2]{solomyak} proved that if $\SL$ is treated as a subset of $\R^{4|\I|}$, then Lebesgue almost all choices of sets of positive matrices $\A\subset \SL$ are Diophantine. In general however it is not known whether the Diophantine property is generic amongst tuples of matrices from $\SL$, and inquiries related to this are a topic of contemporary research, see \cite{solomyak} and references therein.

The Diophantine property is also closely related to a separation condition which has recently received attention in the dimension theory of IFS on $\R^d$, called the ``exponential separation condition" \cite{hochman-annals,rap}. The exponential separation condition states that images of points in $\R^d$ under compositions of fixed length do not get super-exponentially close in the topology of $\R^d$ (see \cite[Definition~1.9]{solomyak} for a more precise statement when $d=1$). The notion of exponential separation was introduced by Hochman  who proved that if an IFS composed of similarity mappings satisfies the exponential separation condition, then the Hausdorff dimension of its attractor is given by a natural formula \cite[Corollary 1.2]{hochman-annals}. This marked significant progress in
a long standing problem on overlapping self-similar sets.

Given a finite set $\A \subseteq \SL$ we can consider the IFS of M\"obius transformations that it induces on $\R$. For example, if $\A$ are chosen in such a way that it induces an IFS of similarity mappings, then the natural formula from \cite[Corollary 1.2]{hochman-annals} corresponds to $\min\{1,\delta_\A\}$ for this choice of $\A$.   Solomyak and Takahashi proved that if an IFS of M\"obius transformations on $\R$, induced by a set of matrices $\A$, satisfies the exponential separation condition then $\A$ satisfies the Diophantine property \cite[Proposition 2.4]{solomyak}. Thus Theorem \ref{sol_thm}, as well as our result Theorem~\ref{MAIN}, can be interpreted as extensions of Hochman's result.

Finally, we present two corollaries that follow from Theorem~\ref{MAIN}. First, a corollary concerning the dimension of the support of stationary measures associated to $\A$. For this, we will require the notions of irreducibility and strong irreducibility of a set $\A \subseteq \SL$.
\begin{defn}
A set $\A\subseteq \SL$ will be called \emph{strongly irreducible}, if the set of maps $\Phi_\A$ do not preserve any finite subset of $\R\P^1$. A set $\A \subseteq \SL$ will be called \emph{irreducible} if the set of maps $\Phi_\A$ do not have a common fixed point in $\R\P^1$. If $\A$ is not irreducible, then we say that it is \emph{reducible}.
\end{defn}

Given a finite set $\A=\{A_i\}_{i \in \I} \subseteq \SL$ and non-degenerate probability vector $(p_i)_{i \in \I}$, we can consider the probability measure $\mu$ on $\SL$ which is supported on $\A$:
\[
\mu=\sum_{i \in \I} p_i \mathbf{1}_{A_i}.
\]

If $\A$ is strongly irreducible and generates an unbounded semigroup, then there exists a unique probability measure $\nu$ on $\R\P^1$ with the property that
\[
\nu=\sum_{i \in \I} p_i \phi_{A_i}^*\nu,
\]
where $\phi_{A_i}^*\nu$ denotes the pushforward of $\nu$ under the map $\phi_{A_i}$, see \cite{fu}. We call $\nu$ the \emph{Furstenberg measure} (or the stationary measure) associated to $\A$ and $(p_i)_{i \in \I}$. We note that if $\A \subseteq \SL$ is semidiscrete, it is irreducible if and only if it is strongly irreducible (Lemma \ref{strong}), therefore given an irreducible, semidiscrete subset $\A \subseteq \SL$ and a non-degenerate probability vector $(p_i)_{i \in \I}$ we can refer to the (unique) Furstenberg measure without any ambiguity.

The dimension theoretic properties of $\nu$, such as its exact dimensionality, were investigated first by Ledrappier \cite{ledrappier} followed by Hochman and Solomyak \cite{hochman_sol} who obtained a more practical formula for its dimension, which provided the main tool behind the proof of Theorem \ref{sol_thm}. On the other hand, Theorems \ref{MAIN} and \ref{sol_thm} allow one to study the dimension of the \emph{support} of the Furstenberg measure. In \cite[Corollary 1.12]{solomyak}, Solomyak and Takahashi showed that if $\A$ is Diophantine, uniformly hyperbolic and irreducible, then $\hd \mathrm{supp} \nu=\min\{1,\delta_\A\}$. Theorem \ref{MAIN} allows us to extend their result in the following way.

\begin{cor}\label{furst}
Suppose $\A \subseteq \SL$ is finite, Diophantine, semidiscrete and irreducible. Let $\nu$ be the Furstenberg measure for a non-degenerate probability vector $(p_i)_{i \in \I}$. Then
\[
\hd \mathrm{supp} \nu= \min\{1, \delta_\A\}.
\]
\end{cor}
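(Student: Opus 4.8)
The plan is to deduce this from Theorem \ref{MAIN} by showing that the support of the Furstenberg measure $\nu$ coincides with the attractor $K_\A$, and that the hypotheses of Corollary \ref{furst} guarantee $K_\A$ is not a singleton. First I would recall that $\mathrm{supp}\,\nu$ is the unique nonempty closed subset of $\R\P^1$ that is minimal with respect to being $\Phi_\A$-invariant, i.e.\ satisfying $\bigcup_{i\in\I}\phi_{A_i}(\mathrm{supp}\,\nu)\subseteq\mathrm{supp}\,\nu$; this is standard for stationary measures of strongly irreducible systems and follows from the relation $\nu=\sum_i p_i\,\phi_{A_i}^*\nu$ together with non-degeneracy of $(p_i)$. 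Since $\A$ is semidiscrete and irreducible, Lemma \ref{strong} gives that it is strongly irreducible, so this minimality and uniqueness hold; in particular $\mathrm{supp}\,\nu$ does not depend on the choice of non-degenerate probability vector.

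Next I would identify this minimal set with $K_\A$. On one hand, every attracting fixed point $a$ of a hyperbolic $\phi_B\in\langle\Phi_\A\rangle$ lies in $\mathrm{supp}\,\nu$: writing $B=A_{i_1}\cdots A_{i_k}$, forward invariance gives $\phi_B(\mathrm{supp}\,\nu)\subseteq\mathrm{supp}\,\nu$, and iterating $\phi_B$ on any point of the (nonempty, closed) set $\mathrm{supp}\,\nu$ outside the repelling fixed point forces $a\in\mathrm{supp}\,\nu$; the same argument applies to the parabolic fixed points. Hence $K_\A\subseteq\mathrm{supp}\,\nu$ by minimality of the closed set $K_\A$ in Definition \ref{attdef} — wait, more precisely: $K_\A$ is the \emph{smallest} closed set containing all these fixed points, and we have just shown $\mathrm{supp}\,\nu$ is a closed set containing all of them, so $K_\A\subseteq\mathrm{supp}\,\nu$. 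For the reverse inclusion, I would use that $\A$ semidiscrete implies (by the contractive properties of $\Phi_\A$ discussed after the definition of semidiscreteness, and the structure theory of Möbius semigroups from \cite{jasho}) that $K_\A$ is nonempty and forward-invariant under $\Phi_\A$; then minimality of $\mathrm{supp}\,\nu$ among nonempty closed forward-invariant sets yields $\mathrm{supp}\,\nu\subseteq K_\A$. Care is needed to verify $K_\A$ is genuinely forward-invariant rather than merely backward-invariant, but this should follow from density of the attracting fixed points of $\langle\Phi_\A\rangle$-hyperbolic elements inside $K_\A$ (a statement of the type recorded in Theorem \ref{dense}) together with continuity of each $\phi_{A_i}$.

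It then remains to rule out that $K_\A$ is a singleton. If $K_\A=\{x_0\}$ then $x_0$ is a common fixed point of all $\phi_{A_i}$, $i\in\I$: indeed forward-invariance forces $\phi_{A_i}(x_0)\in K_\A=\{x_0\}$ for each $i$. But this contradicts irreducibility of $\A$. Hence $K_\A$ is not a singleton, Theorem \ref{MAIN} applies, and we conclude
\[
\hd\,\mathrm{supp}\,\nu=\hd K_\A=\min\{1,\delta_\A\}.
\]
The main obstacle I anticipate is the clean identification $\mathrm{supp}\,\nu=K_\A$ — specifically, establishing forward-invariance of $K_\A$ and the minimality characterization of $\mathrm{supp}\,\nu$ in the possibly non-uniformly-hyperbolic (e.g.\ parabolic) setting; once $\mathrm{supp}\,\nu=K_\A$ is in hand, the rest is immediate from Theorem \ref{MAIN}.
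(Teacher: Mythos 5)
Your proof is correct, and it reaches the conclusion by a genuinely different route from the paper. The paper works \emph{constructively}: in Section~\ref{f-section} it uses the semidiscreteness of $\A$ (via the Jacques--Short theorem) to define the coding map $\Pi_\psi:\I^\N\to\R\P^1$, shows by direct computation that the pushforward $\nu=(\Pi_\psi)^*\mu$ of the Bernoulli measure is stationary, invokes uniqueness of the Furstenberg measure (which follows from strong irreducibility, Lemma~\ref{strong}) to identify this pushforward with $\nu$, and then reads off $\mathrm{supp}\,\nu=\bigcup_{\i\in\I^\N}\Pi_\psi(\i)=K_\A$ from Lemma~\ref{comp} and Theorem~\ref{attlimset}. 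You instead sidestep the explicit coding and characterise $\mathrm{supp}\,\nu$ abstractly as the unique minimal nonempty closed $\Phi_\A$-invariant set (a standard fact for strongly irreducible systems, best justified by noting that restricting the random walk to any nonempty closed invariant set $C$ produces a stationary measure on $C$, which by uniqueness must be $\nu$), and then pair this with the paper's observation after Theorem~\ref{dense} that $K_\A$ is the smallest such set; the two minimality statements pin down $\mathrm{supp}\,\nu=K_\A$. Both arguments are sound. The paper's has the advantage of being self-contained and of producing the symbolic coding as a byproduct; yours is shorter and avoids the stationarity computation, at the cost of appealing to an external fact about supports of stationary measures that the paper chose to prove implicitly rather than cite. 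One small economy you could make: rather than iterating hyperbolic and parabolic elements on $\mathrm{supp}\,\nu$ to show $K_\A\subseteq\mathrm{supp}\,\nu$, it is cleaner to observe, as you do at the end of that step, that $\mathrm{supp}\,\nu$ is a nonempty closed invariant set and quote the paper's remark that $K_\A$ is contained in every such set. Your handling of the non-singleton hypothesis via irreducibility and the invariance $K_\A=\Phi_\A(K_\A)$ from Theorem~\ref{dense}(1) is exactly right and fills a point the paper passes over silently.
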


Our last result is a continuity property for the Hausdorff dimension on Diophantine subsets of $\mathcal{S}$.

\begin{thm}\label{cty}
Let $(\A_n)$ be a sequence of Diophantine and semidiscrete subsets of $\SL$. Assume that $\A_n$ converges to $\A$ in the Hausdorff metric, where $\A\subset\SL$ is semidiscrete, Diophantine and $K_\A$ is not a singleton. Then $\hd K_{\A_n} \to \hd K_\A$.
\end{thm}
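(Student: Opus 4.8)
The plan is to combine Theorem \ref{MAIN}, which identifies $\hd K_{\A_n} = \min\{1,\delta_{\A_n}\}$ and $\hd K_\A = \min\{1,\delta_\A\}$, with a continuity statement for the critical exponent: it suffices to prove that $\delta_{\A_n} \to \delta_\A$ whenever $\A_n \to \A$ in the Hausdorff metric within the class of semidiscrete Diophantine tuples. The upper semicontinuity and lower semicontinuity of $\delta$ must be handled separately, and both will rest on the fact that the zeta function $\zeta_\A(s) = \sum_{n\geq 1}\sum_{A\in\A^n}\norm{A}^{-2s}$ depends, level by level, continuously on the matrix entries, since for each fixed word length $n$ the finite sum $\sum_{A\in\A^n}\norm{A}^{-2s}$ is a continuous function of the $|\I|$ matrices and of $s>0$.

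For \emph{lower semicontinuity} of $\delta$, i.e.\ $\liminf_n \delta_{\A_n} \geq \delta_\A$: fix $s < \delta_\A$, so $\zeta_\A(s) = \infty$. Then for any large constant $M$ there is a finite level $N$ with $\sum_{n=1}^N \sum_{A\in\A^n}\norm{A}^{-2s} > M$. Since $\A_n \to \A$ and each $\A_n$ (being Diophantine) generates a free semigroup of the same cardinality as $\A$, the truncated sums $\sum_{n=1}^N\sum_{A\in\A_n^n}\norm{A}^{-2s}$ converge to the corresponding sum for $\A$ as $n\to\infty$; hence this truncated sum eventually exceeds $M/2$, and since $M$ was arbitrary, $\zeta_{\A_n}(s) = \infty$ for all large $n$, giving $\delta_{\A_n} \geq s$. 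Letting $s \uparrow \delta_\A$ yields the claim. (One subtlety: if $\delta_\A = \infty$ the same truncation argument applied for arbitrarily large $s$ shows $\delta_{\A_n}\to\infty$.)

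For \emph{upper semicontinuity}, i.e.\ $\limsup_n \delta_{\A_n} \leq \delta_\A$: this is the harder direction, and the essential obstacle is that the individual terms $\norm{A}^{-2s}$ for long words $A \in \A_n^n$ need not be uniformly controlled by those of $\A$ without an additional exponential-growth input. Here we exploit that semidiscreteness of the limit $\A$ gives, via the contractive behaviour of $\Phi_\A$ on $\R\P^1$ established in Section \ref{s2}, a uniform lower bound $\norm{A} \geq c\lambda^{n}$ on the growth of norms along words of length $n$ — or at least a subexponential-in-reverse estimate sufficient to dominate the tail — together with the fact that this bound is stable under small perturbations of $\A$ (one passes to a slightly smaller $\lambda' > 1$ and a possibly smaller $c' > 0$ valid for all $\A_n$ with $n$ large). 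Granting such a uniform estimate, fix $s > \delta_\A$; then $\zeta_\A(s') < \infty$ for some $s' \in (\delta_\A, s)$, and the uniform norm growth lets us bound $\sum_{A\in\A_n^k}\norm{A}^{-2s}$ geometrically in $k$ with a ratio strictly below $1$, uniformly in large $n$, whence $\zeta_{\A_n}(s) < \infty$ and $\delta_{\A_n} \leq s$. Letting $s \downarrow \delta_\A$ completes the proof.

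The main difficulty, as indicated, is upgrading the qualitative semidiscreteness of $\A$ (and of each $\A_n$) into a quantitative, perturbation-stable exponential lower bound on $\norm{A}$ over $\A_n^n$ that is uniform in $n$; this is where the machinery of Möbius semigroups and multicones from Section \ref{s2} enters, and one should expect to invoke a compactness argument on the space of multicones together with the openness of the semidiscreteness-type conditions used there. Once that uniformity is in hand, the remainder is the elementary two-sided semicontinuity argument above combined with Theorem \ref{MAIN}, noting finally that $K_{\A_n}$ is automatically not a singleton for large $n$ — since $K_\A$ is not a singleton, $\delta_\A > 0$, and the lower semicontinuity already forces $\delta_{\A_n}$ bounded away from $0$, which via the structure of $K_{\A_n}$ precludes it being a single point.
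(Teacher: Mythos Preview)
Your overall strategy of reducing to continuity of $\delta_\A$ via Theorem \ref{MAIN} matches the paper's starting point, but both halves of your semicontinuity argument contain genuine gaps.

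\textbf{Upper semicontinuity.} You assert that semidiscreteness of $\A$ yields a uniform lower bound $\norm{A}\geq c\lambda^n$ for $A\in\A^n$, stable under perturbation. That bound is precisely the definition of uniform hyperbolicity (Definition~\ref{uh}), which is strictly stronger than semidiscreteness: the paper stresses this distinction throughout, and Section~\ref{examples} gives semidiscrete $\A$ containing parabolic matrices (where $\norm{P^n}=\mathrm{O}(n)$) or generating non-discrete semigroups (where norms along a subsequence stay bounded). So your claimed estimate is false in the generality needed, and no compactness-of-multicones argument will produce it. The paper circumvents this by working not with $\A$ itself but with the uniformly hyperbolic subsystem $\A_\infty\subseteq\A^*$ of Proposition~\ref{uh-sub}, for which Lemma~\ref{almost} gives almost-multiplicativity $\norm{AB}\geq c\norm{A}\norm{B}$; this turns the pressure into an infimum (Fekete), making it upper semicontinuous. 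One then transfers back to $\A$ via $s_{\A_\infty}=s_\A$ (Theorem~\ref{s=d}). Moreover the paper splits off entirely the case where $\A$ is reducible, semidiscrete and not uniformly hyperbolic: there it does not attempt continuity of $\delta$ at all, but shows directly (Lemmas~\ref{para-rep}, \ref{att-rep} and Theorem~\ref{principal}) that $K_{\A_n}$ contains an interval for large $n$, so both dimensions equal $1$.

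\textbf{Lower semicontinuity.} Your argument concludes ``since $M$ was arbitrary, $\zeta_{\A_n}(s)=\infty$ for all large $n$''. This does not follow: for each $M$ you get a threshold $n_0(M)$, increasing with $M$, beyond which the truncated sum for $\A_n$ exceeds $M/2$; but a large truncated partial sum says nothing about divergence of the full series, and no single $n$ is covered for all $M$. The paper instead uses the pressure $P_\A(s)=\sup_k(\sum_{A\in\A^k}\norm{A}^{-2s})^{1/k}$, a supremum of continuous functions and hence lower semicontinuous in $\A$; combined with $s_\A=\delta_\A$ this gives the correct direction. Your truncation idea could be salvaged if rephrased in terms of $P_\A$ rather than $\zeta_\A$.

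Finally, your closing remark that $\delta_{\A_n}$ bounded away from $0$ precludes $K_{\A_n}$ being a singleton is unjustified: a reducible system with a singleton attractor can still have positive critical exponent. The paper does not argue this way; once $\delta_{\A_n}\to\delta_\A$ (or the direct interval argument applies) the conclusion about $\hd K_{\A_n}$ follows from Theorem~\ref{MAIN} applied to those $\A_n$ for which $K_{\A_n}$ is not a singleton, and the singleton case is trivially handled since then $\hd K_{\A_n}=0$ while $\delta_{\A_n}\to\delta_\A>0$ would give a contradiction via the lower-bound half of the proof.
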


We remark that there are also many points of discontinuity of the Hausdorff dimension in $\SL$ and as such, the assumptions on the limit point $\A$ in Theorem \ref{cty} cannot be significantly weakened, see Remark \ref{discont}. The proof of Theorem \ref{cty} is largely based on two results: an identification of the critical exponent of the zeta function of $\A$ with the minimal root of its pressure function (see \eqref{pressure}, in Section~\ref{irred}, for the definition), and a continuity result for the critical exponent, Theorem~\ref{cty_str}. These results, however, require further setup before they can be stated properly.\\

\noindent\textbf{Structure of the paper.} In the second section we present a geometric approach to how the uniformly hyperbolic and semidiscrete properties affect the contraction of the induced IFS. Section~\ref{examples} is dedicated to applying Theorem~\ref{MAIN} to three classes of projective IFS that are not uniformly hyperbolic. The main properties of the attractor will be proved in Section~\ref{attractor}, where we also settle Corollary~\ref{furst}.

The proof of our main result, Theorem~\ref{MAIN}, is carried out in two steps. First, we settle the theorem for reducible sets of matrices in Section~\ref{eg}, before moving to the more general, irreducible, case in Section~\ref{irred}. Finally, Section~\ref{cty_sect} contains the proof of Theorem~\ref{cty}.

\section{Contractive properties for projective IFS} \label{s2}

There is a strong connection between the uniform hyperbolicity of a finite set of matrices and the contractive properties of the projective IFS that it induces. Before we describe this connection, in Theorem~\ref{abybv} to follow, we make the following definition.

\begin{defn}
A subset of $\R\P^1$ will be called a \emph{multicone} if it is a finite union of open intervals with disjoint closures.
\end{defn}

For a set $U\subseteq \R\P^1$, define $\Phi_\A(U)\vcentcolon= \bigcup_{A \in \A} \phi_A(U)$. We say that $\Phi_\A$ maps a subset $U \subseteq \R\P^1$ \emph{compactly inside itself} if $\Phi_\A(\overline{U}) \subsetneq U$ and \emph{strictly inside itself} if $\Phi_\A(U) \subsetneq U$. Also, let $d_\P$ denote the metric induced by identifying $\R\P^1$ with $\R/\pi\Z$ (see \cite[Section 2]{hochman_sol} for a formula for $d_\P$). The next theorem can be obtained by combining the main results from \cite{aby} and \cite{bv}.

\begin{thm}\label{abybv}
Let $\A$ be a finite subset of $\SL$. The following are equivalent:
  \begin{enumerate}
  \item $\A$ is uniformly hyperbolic.
  \item $\Phi_\A$ maps a multicone compactly inside itself.
  \item There is a non-trivial open set $U \subsetneq \R\P^1$ and a metric $d$ in $\R\P^1$, equivalent to $d_\P$, such that $\Phi_\A$ is uniformly contractive on $\overline{U}$ with respect to $d$.
\item There exists a non-empty closed set $F \subsetneq \R\P^1$, such that $\Phi_\A(F)=F$ and $\Phi_{\A^n}(B)$ converges to $F$ as $n\to\infty$, in the Hausdorff metric in $\R\P^1$, for all non-empty closed sets $B$ in an open neighbourhood of $F$.
  \end{enumerate}
  \end{thm}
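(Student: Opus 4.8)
The plan is to prove the four properties equivalent through the chain (1)$\Leftrightarrow$(2)$\Rightarrow$(3)$\Rightarrow$(4)$\Rightarrow$(1), so that (3) and (4) are pulled back to (1) and (2) around the loop. The equivalence (1)$\Leftrightarrow$(2) is where the cited literature does the real work: I would pass to the locally constant $\SL$-valued cocycle over the full shift on $\lvert\I\rvert$ symbols whose fibre generators are the matrices of $\A$, under which the defining inequality $\norm{A}\geq c\lambda^{n}$ for $A\in\A^{n}$ is exactly uniform hyperbolicity of the cocycle. The characterisation of uniformly hyperbolic $\SL$-cocycles by the existence of a strictly invariant multicone in Avila--Bochi--Yoccoz \cite{aby}, together with the translation of this picture to the projective-semigroup setting in \cite{bv}, then gives precisely the equivalence of (1) with the existence of a multicone $M$ satisfying $\Phi_\A(\overline M)\subsetneq M$. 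The only thing to verify is that ``multicone'' and ``strict invariance'' in those references agree with the notions of multicone and of a map acting compactly inside itself used in this section, which is routine.

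For (2)$\Rightarrow$(3), starting from a multicone $M$ with $\Phi_\A(\overline M)\subsetneq M$, I would fix a multicone $U$ with $\overline{\Phi_\A(\overline M)}\subseteq U\subseteq\overline U\subseteq M$; then $U\subsetneq\R\P^{1}$, each $\phi_A$ ($A\in\A$) maps $\overline U$ into a compact subset of $M$, and $\Phi_\A(\overline U)\subseteq\overline U$. One then equips a neighbourhood of $\overline U$ with a metric $d$ equivalent to $d_\P$ for which every generator is a uniform $\rho$-contraction, $\rho<1$; this can be assembled from the Hilbert cross-ratio metrics of the component intervals of $M$ (each $\phi_A$ decreases the relevant cross-ratios uniformly, since images are compactly contained) or taken from the adapted-metric construction of \cite{bv}. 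For (3)$\Rightarrow$(4), since we are traversing the chain (2)$\Rightarrow$(3)$\Rightarrow$(4) we may take $U$, $d$, $\rho$ to be those just constructed, so that $\Phi_\A(\overline U)\subseteq\overline U$, and set $F\vcentcolon=\bigcap_{n\geq1}\Phi_{\A^{n}}(\overline U)$. This is a decreasing intersection of nonempty compacta, so $F$ is nonempty, closed and contained in $\overline U\subsetneq\R\P^{1}$; continuity of the finitely many maps with nestedness gives $\Phi_\A(F)=\bigcap_{n}\Phi_{\A^{n+1}}(\overline U)=F$. With $U$ as the open neighbourhood: for nonempty closed $B\subseteq U$ and $\epsilon>0$, compactness and nestedness yield $m$ with $\Phi_{\A^{m}}(\overline U)$ inside the $\epsilon$-neighbourhood of $F$, whence $\Phi_{\A^{n}}(B)\subseteq\Phi_{\A^{n}}(\overline U)\subseteq\Phi_{\A^{m}}(\overline U)$ lies there for all $n\geq m$; conversely, iterating $\Phi_\A(F)=F$ gives $F=\bigcup_{\i\in\I^{n}}\phi_{A_\i}(F)$, and since each $\phi_{A_\i}$, $\i\in\I^{n}$, contracts $d$ by $\rho^{n}$, every point of $F$ lies within $\rho^{n}\operatorname{diam}_{d}(\overline U)$ of $\Phi_{\A^{n}}(B)$. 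Hence $\Phi_{\A^{n}}(B)\to F$ in the Hausdorff metric.

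For (4)$\Rightarrow$(1), let $V$ be the open neighbourhood of $F$ from (4); we may assume $V\subsetneq\R\P^{1}$, for otherwise $B=\R\P^{1}$ and $\Phi_{\A^{n}}(\R\P^{1})=\R\P^{1}$ would force $F=\R\P^{1}$. Since $F$ is compact, covering it by finitely many short open intervals inside $V$ and merging overlapping ones produces a multicone $M$ with $F\subseteq M\subseteq\overline M\subseteq V$ and $\overline M\neq\R\P^{1}$. Because $\Phi_{\A^{n}}(\overline M)\to F$ and $F$ lies in the open set $M$, there is $n_{0}$ with $\Phi_{\A^{n_{0}}}(\overline M)\subsetneq M$; that is, the finite set $\A^{n_{0}}$ satisfies (2), so by (2)$\Rightarrow$(1) it is uniformly hyperbolic, say $\norm{A}\geq c\lambda^{k}$ for all $A\in(\A^{n_{0}})^{k}$. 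Any $A\in\A^{m}$ then factors as $A=A_{1}A_{2}$ with $A_{1}\in(\A^{n_{0}})^{\lfloor m/n_{0}\rfloor}$ and $A_{2}$ a product of fewer than $n_{0}$ generators; using $\norm{A_{1}A_{2}}\geq\norm{A_{1}}/\norm{A_{2}}$ in $\SL$ and the uniform bound on $\norm{A_{2}}$, this gives $\norm{A}\geq c'(\lambda^{1/n_{0}})^{m}$, so $\A$ is uniformly hyperbolic. I expect the main obstacle to be the adapted-metric construction in (2)$\Rightarrow$(3) --- producing one metric, equivalent to $d_\P$ on a whole neighbourhood of $\overline U$, under which every generator is a genuine Lipschitz contraction even when it carries one component of the multicone onto a different one --- together with the bookkeeping matching the definitions of multicone and uniform hyperbolicity used here to those of \cite{aby,bv}. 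Both are essentially contained in the cited works, so the role of this proof is largely to assemble them.
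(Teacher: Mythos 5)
The paper does not prove this theorem; it merely remarks that the result ``can be obtained by combining the main results from'' Avila--Bochi--Yoccoz and Barnsley--Vince, so there is no argument in the paper to compare against. Your reconstruction is largely sound --- the treatment of (1) $\Leftrightarrow$ (2), the recognition that (2) $\Rightarrow$ (3) requires the Barnsley--Vince adapted-metric construction (a single metric equivalent to $d_\P$ that is contracted even by a generator sending one multicone component into a different one), and the self-contained (4) $\Rightarrow$ (1) via an iterate $\A^{n_0}$ with $\Phi_{\A^{n_0}}(\overline{M})\subsetneq M$ followed by the $\norm{A_1A_2}\geq\norm{A_1}/\norm{A_2}$ estimate in $\SL$ are all correct. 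However, the step you label (3) $\Rightarrow$ (4) has a genuine logical gap.

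You write that ``since we are traversing the chain (2) $\Rightarrow$ (3) $\Rightarrow$ (4) we may take $U$, $d$, $\rho$ to be those just constructed, so that $\Phi_\A(\overline{U})\subseteq\overline{U}$.'' That is not a valid move in a circular chain of implications: each link must be proved from its stated hypothesis alone, since the whole point of the chain is to show that (3), on its own, implies (1), (2) and (4). What your argument actually establishes is (2) $\Rightarrow$ (4); nothing in the chain then returns to show that (3) implies any of the others. Condition (3), read literally, says only that each generator contracts $d$-distances uniformly on $\overline{U}$; it does not assert $\Phi_\A(\overline{U})\subseteq\overline{U}$, which is exactly the forward invariance you need to make the sets $\Phi_{\A^n}(\overline{U})$ nested and to define $F$ as their intersection. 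The correct repair is to make explicit that ``uniformly contractive on $\overline{U}$'' is being used in the sense of Barnsley--Vince, whose definition of a contractive projective IFS builds in the requirement $\phi_A(\overline{U})\subseteq U$ for every $A\in\A$. Once that is stated, your (3) $\Rightarrow$ (4) argument is a clean application of the contraction mapping principle on the compact metric space $(\overline{U},d)$ and the chain closes; without it, you would need a separate argument extracting an invariant open set from mere contractivity, which you do not give.
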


So, a uniformly hyperbolic set $\A$ necessarily induces a uniformly contractive iterated function system on any multicone which is compactly contracted by $\Phi_\A$. Barnsley and Vince \cite{bv} defined the attractor of a projective IFS $\Phi_\A$ as the set $F$ which satisfies \textit{(4)} above, and showed that it exists if and only if $\A$ is uniformly hyperbolic, in which case it is unique. The same definition of an attractor was used in \cite{solomyak}. In Section~\ref{attractor}, we show that our definition of an attractor is in fact a generalisation of the definition from \cite{bv}, in the sense that the two coincide whenever $\A$ is uniformly hyperbolic. Therefore, we freely use the term ``attractor" without fear of confusion between the two definitions.

On the other hand, semidiscrete subsets of $\SL$ do not generally induce a uniformly contractive iterated function system on any subset of $\R\P^1$ (see, for example, Lemmas \ref{para-rep} and \ref{att-rep}). Nevertheless, an analogue of Theorem~\ref{abybv} for semidiscrete subsets of $\SL$ was proved by Jacques and Short \cite[Theorem~7.1]{jasho}, and can be stated as follows.

\begin{thm}\label{sd}
Let $\A$ be a finite subset of $\SL$. If $\Phi_\A$ maps a closed subset of $\R\P^1$ strictly inside itself, then $\A$ is semidiscrete. Conversely, if $\A$ is semidiscrete, then there exists a non-trivial closed subset $C\subsetneq \R\P^1$, such that $\Phi_\A(C)\subseteq C$.
\end{thm}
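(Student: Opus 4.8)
The plan is to prove the two implications separately; the first is elementary, while the second rests on the structure theory of M\"obius semigroups and is the genuinely delicate part.

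\emph{If $\Phi_\A$ maps a closed set strictly inside itself, then $\A$ is semidiscrete.} Suppose $C\subseteq\R\P^1$ is closed with $\Phi_\A(C)\subsetneq C$; note this already forces $C$ to be non-empty and proper, since each $\phi_A$ is surjective. First I would check, by induction on word length, that $\phi_w(C)\subseteq\Phi_\A(C)$ for every $w\in\A^*$: the case $|w|=1$ is the definition of $\Phi_\A(C)$, and if $w=Av$ with $A\in\A$ and $v\in\A^*$ then $\phi_v(C)\subseteq\Phi_\A(C)\subseteq C$ by the inductive hypothesis, whence $\phi_w(C)=\phi_A(\phi_v(C))\subseteq\phi_A(C)\subseteq\Phi_\A(C)$. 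Since $\A$ is finite and each $\phi_A$ is a homeomorphism of $\R\P^1$, the set $\Phi_\A(C)$ is closed; as the inclusion $\Phi_\A(C)\subsetneq C$ is strict I may fix $p\in C\setminus\Phi_\A(C)$ together with an open neighbourhood $W$ of $p$ with $W\cap\Phi_\A(C)=\emptyset$. If now $\mathrm{Id}\in\overline{\A^*}$, pick $w_k\in\A^*$ with $w_k\to\mathrm{Id}$ in $\SL$; by continuity of $(A,x)\mapsto\phi_A(x)$ we obtain $\phi_{w_k}(p)\to p$, so $\phi_{w_k}(p)\in W$ for large $k$, whereas $p\in C$ forces $\phi_{w_k}(p)\in\phi_{w_k}(C)\subseteq\Phi_\A(C)$. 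This contradiction shows $\mathrm{Id}\notin\overline{\A^*}$.

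\emph{If $\A$ is semidiscrete, then some non-trivial closed $C\subsetneq\R\P^1$ has $\Phi_\A(C)\subseteq C$.} The starting point is that for any $x\in\R\P^1$ the orbit closure $C_x\vcentcolon=\overline{\{\phi_w(x):w\in\A^*\}}$ is non-empty and automatically forward invariant, since $\phi_A(C_x)=\overline{\{\phi_{Aw}(x):w\in\A^*\}}\subseteq C_x$ for every $A\in\A$. Hence it suffices to produce a single $x$ with $C_x\neq\R\P^1$; equivalently, to show that a semidiscrete $\A$ cannot act minimally on $\R\P^1$. I would argue the contrapositive: assuming every forward orbit is dense, I aim to find either a sequence in $\A^*$ converging to $\mathrm{Id}$, or an elliptic element of $\overline{\A^*}$; in the second case the closed sub-semigroup $\overline{\{\psi^n:n\geq 1\}}\subseteq\overline{\A^*}$ generated by such an element $\psi$ already contains $\mathrm{Id}$ (an elliptic map being conjugate to a rotation), so either outcome contradicts semidiscreteness. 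Concretely I would fix three distinct points $x_1,x_2,x_3$, use density of the orbit of $x_1$ to choose $w_k\in\A^*$ with $\phi_{w_k}(x_1)\to x_1$, pass to a subsequence so that either $w_k\to\psi\in\SL$ or $\norm{w_k}\to\infty$, and then exploit the density of the orbits of $x_2$ and $x_3$ to pin down the behaviour of $\phi_{w_k}$ away from $x_1$, forcing $\psi$ to be the identity or elliptic and excluding the north--south degeneration.

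The principal obstacle is exactly this last step: converting ``every forward orbit is dense'' into a near-identity, or elliptic, element of $\overline{\A^*}$. Making the compactness argument in $\SL$ rigorous -- ruling out the degenerate limits, and in the bounded case upgrading a non-trivial parabolic or hyperbolic limit fixing a single point into genuine near-identity behaviour by composing with further semigroup elements supplied by density -- is precisely where the structure theory of semidiscrete M\"obius semigroups is needed; this direction is \cite[Theorem~7.1]{jasho}, to which I would refer for the details beyond the sketch above.
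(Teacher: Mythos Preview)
The paper does not prove Theorem~\ref{sd} at all: it is quoted verbatim as \cite[Theorem~7.1]{jasho}, with no argument given for either implication. Your proposal therefore goes strictly beyond what the paper does.

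Your proof of the first implication (strict invariance $\Rightarrow$ semidiscrete) is complete and correct. The induction showing $\phi_w(C)\subseteq\Phi_\A(C)$ for every $w\in\A^*$, the observation that $\Phi_\A(C)$ is closed (as a finite union of homeomorphic images of a compact set), and the final contradiction via $\phi_{w_k}(p)\to p$ are all sound; note that the case $\mathrm{Id}\in\A^*$ is covered by taking a constant sequence.

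For the converse, your reduction to the orbit-closure $C_x$ is fine, and the reformulation ``a semidiscrete $\A$ cannot act minimally on $\R\P^1$'' is a reasonable way to frame what must be shown. However, as you yourself flag, the sketch that follows---fixing three points, extracting a subsequence of near-return words, and trying to force either an elliptic limit or north--south collapse to be excluded---does not close without substantial further input. The unbounded-norm case is exactly where the difficulty lies: density of a \emph{forward} orbit does not by itself prevent a sequence $w_k$ with $\|w_k\|\to\infty$ and $\phi_{w_k}(x_1)\to x_1$, and turning this into a genuine near-identity element requires the kind of structural analysis carried out in \cite{jasho}. Since you end by invoking \cite[Theorem~7.1]{jasho} anyway, your treatment of this direction is, in substance, the same as the paper's: a citation. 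The honest summary is that you have supplied an independent proof of the easy half and a heuristic outline, followed by the same reference, for the hard half.
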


The second part of Theorem~\ref{sd} cannot be improved since there are examples where there exists a unique interval fixed by $\Phi_\A$ and yet $\A$ is semidiscrete (see \cite[Section~9]{jasho}). Theorem~\ref{sd} indicates that even though a projective IFS induced by a semidiscrete set $\A$ may not contain topological contractions, its action on $\R\P^1$ still exhibits some contractive properties.

Comparing Theorems \ref{abybv} and \ref{sd}, we obtain a geometric interpretation of the fact that uniformly hyperbolic subsets of $\SL$ are semidiscrete. The simplest example of a set $\A$ that is semidiscrete and not uniformly hyperbolic, is one where $\Phi_\A$ maps a multicone strictly, but not \emph{compactly} inside itself. One can find many examples of such systems by considering subsets $\A  \subseteq \SL$ which lie on the boundaries of connected components of $\mathcal{H}$. It is important to note however that not all examples are of this type. For instance, \cite[Example~6.5]{thesis} mentioned in the introduction demonstrates a semidiscrete set $\A$ where $\Phi_\A$ does not map any multicone inside itself, but instead maps an infinitely connected closed subset of $\R\P^1$ inside itself (an infinitely connected version of a multicone).

\section{Examples}\label{examples}
Before we describe the proofs of our results, we apply our main theorem, Theorem~\ref{MAIN}, to three templates of projective iterated functions systems that are not induced by uniformly hyperbolic sets of matrices. These offer some insight on the restrictions posed by uniform hyperbolicity and the need for a more general approach on projective IFS.\\

\noindent\textbf{Systems with parabolics.}  Suppose that $\A$ is a finite, Diophantine and semidiscrete subset of $\SL$. Assume the attractor $K_\A$ of $\A$ is not a singleton, and that there exists a parabolic matrix $P$ in $\A^*$. The existence of $P$ forces the critical exponent of the zeta function of $\A$ to be large, in some sense which has a similar effect on the Hausdorff dimension of $K_\A$, due to Theorem~\ref{MAIN}.

To be more specific, note that since $P$ is parabolic $\norm{P^n}=\mathrm{O}(n)$. Thus
\[
\zeta_\A(s)\geq \sum_{n=1}^\infty \norm{P^n}^{-2s}= C^{-2s}\sum_{n=1}^\infty n^{-2s},
\]
for some constant $C>0$, and the sum on the right hand side diverges for all $s\leq\tfrac{1}{2}$. So $\delta_\A\geq \tfrac{1}{2}$, which in turn implies that $\hd K_\A\geq \tfrac{1}{2}$.\\

\noindent\textbf{Non-discrete systems.} A semigroup $S\subset\SL$ is called \emph{discrete} if it is a discrete subset in the topology of  $\SL$. It is clear that uniformly hyperbolic sets generate discrete semigroups. The same, however, does not hold for semidiscrete sets (see \cite[Section~3]{jasho} and Lemma~\ref{att-rep}). Even though semidiscrete subsets of $\SL$ that generate non-discrete semigroups exhibit interesting properties, Theorem~\ref{MAIN} renders studying the dimension of their attractor trivial.

Let $\A\subseteq \SL$. We show that $\delta_\A=\infty$ whenever $\A^*$ is not discrete. Since $\A^*$ is not discrete, there exists a sequence $(A_n)\subseteq \A^*$ that converges to some $A\in \SL$. So, there exists some $r>0$ such that $\norm{A_n}<r$, for all $n$. Hence, for all $s>0$
\[
\zeta_\A(s)\geq\sum_{n=1}^\infty \norm{A_n}^{-2s}\geq \sum_{n=1}^\infty\frac{1}{r^{2s}}.
\]
Thus $\zeta_\A(s)$ diverges for all $s>0$, implying that $\delta_\A=\infty$.

If we additionally assume that $\A$ is a finite, Diophantine and semidiscrete subset of $\SL$, and the attractor $K_\A$ is not a singleton, then Theorem~\ref{MAIN} shows that $\hd K_\A=1$.\\

\noindent\textbf{Systems with elliptics.} Our final class of examples is motivated by the work of B\'ar\'any, K\"aenm\"aki and Morris \cite{bakamo}. Suppose that $M$ is a multicone in $\R\P^1$, and let $E, S\subseteq \SL$ be finite non-empty sets that satisfy the following:
\begin{enumerate}
\item $\Phi_S$ maps $M$ strictly inside itself.
\item $E$ contains only elliptic matrices and $\Phi_E$ fixes the multicone $M$.
\end{enumerate}
Notice that Theorem~\ref{sd} implies that $S$ is semidiscrete. Also, since $M$ is a multicone, all elements of $E$ have to be elliptic matrices of finite order. Let $p$ denote the lowest common multiple of the orders of elements in $E$, and choose $B \in E^*$ such that $B$ has order $p$.

Now, consider $\A=S\cup E$. The set $\A$ is neither semidiscrete nor Diophantine, since $\mathrm{Id}\in \A^*$, but its action on projective space is, in some sense, tame due to the fact that $\A$ consists of two ``well-behaved" sets. The authors of \cite{bakamo} are mainly interested in case where $S$ is uniformly hyperbolic, where they prove that it is equivalent to the property that the cocycle induced by $\A$ is almost additive \cite[Corollary~2.5]{bakamo}.

Note that $K_\A$ is non-empty, as the semigroup $\A^*$ contains many hyperbolic or parabolic matrices, but we cannot use Theorem~\ref{MAIN} directly to determine its Hausdorff dimension. Instead, we consider the set 
\begin{align*}
\A_0=\left\{AB^n \; : A \in S, \; 0 \leq n \leq p-1\right\}.
\end{align*}
Since $\Phi_S$ maps $M$ strictly inside itself and $\Phi_E$ fixes $M$, it is obvious that $\Phi_{\A_0}$ maps $M$ strictly inside itself, and thus Theorem~\ref{sd} implies that $\A_0$ is semidiscrete. Furthermore, we can show that $\hd K_\A=\hd K_{\A_0}$.

To see this, let
\begin{align*}
\A_j=\left\{B^jAB^n \; : A \in S, \; 0 \leq n \leq p-1\right\}.
\end{align*}
for each $1 \leq j \leq p-1$. In particular, $K_\A= \bigcup_{j=0}^{p-1} K_{\A_j}$. Let $1 \leq j \leq p-1$. We begin by showing that $K_{\A_0}$ contains a bi-Lipschitz copy of $K_{\A_j}$. To see this, let $C \in \A_j^*$, so that $C=B^jD$ for some $D \in \A_0^*$. Note that either $C, D$ and $DB^j$ are all hyperbolic or they are all parabolic. Assuming they are all hyperbolic, let $a(DB^j)$ denote the attracting fixed point of $\phi_{DB^j}$ and let $a(C)$ denote the attracting fixed point of $\phi_C$ and note that $a(C)=\phi_{B^j}(a(DB^j))$. 
 Similarly, if $C, D$ and $DB^j$ are all parabolic, then letting $p(C)$ denote the parabolic fixed point of $\phi_C$ and letting $p(DB^j)$ denote the parabolic fixed point of $\phi_{DB^j}$ we deduce that $p(C)=\phi_{B^j}(p(DB^j))$. Since $DB^j \in \A_0^*$ we conclude that $\phi_{B^{-j}}(K_{\A_j}) \subseteq K_{\A_0}$, therefore $\hd K_{\A_j} \leq \hd K_{\A_0}$. Similarly we can show that $\hd K_{\A_j} \geq \hd K_{\A_0}$. Since $1 \leq j \leq p-1$ was chosen arbitrarily, $\hd K_\A=\hd K_{\A_0}$.

 So, if we assume that $\A_0$ is Diophantine and that $K_\A$ is not a singleton, then using Theorem~\ref{MAIN} on the set $\A_0$ yields the formula $\hd K_\A=\min\{1,\delta_{\A_0}\}$.

\section{Attractors of projective IFS} \label{attractor}

Let $\A$ be a subset of $\SL$. Recall that the attractor of $\Phi_\A$, where it exists, is defined as the smallest closed set $K_\A\subseteq \R\P^1$  containing all attracting fixed points of hyperbolic maps in $\langle \Phi_\A \rangle$ and all unique fixed points of parabolic maps in $\langle \Phi_\A \rangle$. In this section we prove important properties for $K_\A$ that will be used throughout the rest of the paper. It is important to note that most of our results involving $K_\A$ will not assume that the set $\A$ is finite, and thus can be used even for infinitely-generated IFS.

\subsection{M\"obius transformations and limit sets of semigroups} \label{mobius}
Let us consider the projective space $\R\P^1$ as the extended real line $\overline{\R}=\R\cup\{\infty\}$. As mentioned in the introduction, $\SL$ acts on $\R$ with M\"obius transformations. For a matrix 
\[
A=\begin{pmatrix}
a & b\\
c & d
\end{pmatrix},\]
in $\SL$, we define the map $f_A\colon \overline{\R} \to  \overline{\R}$ with
\[
f_A(x)=\frac{ax+b}{cx+d}.
\]
Through this map, a set of matrices $\A\subseteq \SL$ induces an iterated function system $F_\A=\{f_A\colon A\in\A\}$ on $\overline{\R}$. Also, define $\psi\colon (0,\pi]\to \overline{\R}$ with $\psi(\theta)=\cos \theta\slash\sin \theta$ and note that $\psi$ is a smooth mapping for which the derivatives of $\psi$ and $\psi^{-1}$ are bounded on compact subsets of $(0,\pi)$. Hence, $\psi$ provides us with the following correspondence between the iterated functions systems $\Phi_\A$ and $F_\A$
\begin{equation}\label{psieq}
\psi \circ \phi_A= f_A \circ \psi.
\end{equation}
Observe that for any $A\in\SL$ the map $f_A$ is a linear fractional transformation. Hence, if $\H$ is the upper half-plane in $\C$, the action of $A$ on $\overline{\R}$ can be extended to an action on $\overline{\H}=\H\cup\overline{\R}$. Thus, for any $\A\subseteq \SL$, the set $F_\A$ can be thought of as a set of M\"obius transformations mapping $\H$ conformally onto itself. Also note that the group of all such M\"obius transformations is isomorphic to $\mathrm{PSL}(2,\R)$. We refer the reader to \cite[Section 2]{reyes} for an interesting approach on the connection between $\SL$ and the conformal automorphisms of $\H$. 

Let $\langle F_\A \rangle$ be the semigroup generated by $F_\A$. For a point $z\in\overline{\H}$, we define \emph{the orbit of $z$ under $\langle F_\A \rangle$} to be the set $ F_\A (z)\vcentcolon=\{f_A(z)\colon A\in\A^*\}$. In accordance to the theory of discrete groups, we define the limit set of $F_\A$ as follows.

\begin{defn}\label{deflim}
The \emph{forward limit set} $\att(F_\A)$ of $F_\A$ is defined to be the set $\overline{ F_\A (z_0)}\cap\overline{\R}$, where $z_0\in\mathbb{H}$. Similarly, the \emph{backward limit set} $\rep(F_\A)$ of $F_\A$ is defined to be the forward limit set of $\{{f_A}^{-1}\colon A\in\A\}$.
\end{defn}

The definition of the forward and backward limit sets does not depend on the choice of the point $z_0$, and so we will often chose $z_0$ to be the complex number $i$. Furthermore, observe that $\att(F_\A)$ is forward invariant under $F_\A$, while $\rep(F_\A)$ is backward invariant.

Properties of the limit sets of M\"obius semigroups were studied in \cite{frmast,jasho,hima}. Of particular interest to our analysis is the following result by Fried, Marotta and Stankiewitz \cite[Theorem~2.4, Proposition~2.6 and Remark~2.20]{frmast}.

\begin{thm}\label{rep}
Let $\A\subseteq\SL$ be such that $\A^*$ contains hyperbolic matrices. Then $\rep(F_\A)$ is the smallest closed set in $\overline{\R}$ containing all repelling fixed points of hyperbolic transformations in $\langle F_\A \rangle$. If, in addition, $\rep(F_\A)$ is infinite then it is a perfect set.
\end{thm}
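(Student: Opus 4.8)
\emph{Plan.} By definition $\rep(F_\A)=\overline{\{g(z_0):g\in\langle\mathcal G\rangle\}}\cap\overline{\R}$, where $\mathcal G=\{f_A^{-1}:A\in\A\}$ and we may fix $z_0=i\in\H$. A map $g\in\langle F_\A\rangle$ is hyperbolic precisely when $g^{-1}\in\langle\mathcal G\rangle$ is, in which case the repelling fixed point of $g$ is the attracting fixed point of $g^{-1}$; hence the set of repelling fixed points of hyperbolic elements of $\langle F_\A\rangle$ coincides with the set of attracting fixed points of hyperbolic elements of $\langle\mathcal G\rangle$. So it is enough to show: (i) $\rep(F_\A)$ contains this set; (ii) $\rep(F_\A)$ is contained in its closure; (iii) if $\rep(F_\A)$ is infinite it has no isolated points. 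Part (i) is immediate: if $g^{-1}\in\langle\mathcal G\rangle$ is hyperbolic with attracting fixed point $r$ (the repelling fixed point of $g$), then, $z_0$ not being its repelling fixed point, $g^{-n}(z_0)\to r$, so $r\in\rep(F_\A)$; closedness of $\rep(F_\A)$ finishes (i).

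\emph{The inclusion (ii).} This is the heart of the matter and rests on two elementary facts. The first is a degeneration lemma: if $(M_n)\subset\SL$ has $\|M_n\|\to\infty$, then, after passing to a subsequence and writing the Cartan decomposition $M_n=K_nD_nL_n$ with $K_n,L_n\in\mathrm{SO}(2)$, there are points $p,q\in\overline{\R}$ with $f_{M_n}\to q$ locally uniformly on $\overline\C\setminus\{p\}$ and $f_{M_n}^{-1}\to p$ locally uniformly on $\overline\C\setminus\{q\}$. The second: if $g\in\mathrm{PSL}(2,\R)$ sends the closure of a round disk $D\subset\overline\C$ strictly inside $D$, then $g$ is hyperbolic with attracting fixed point in $D\cap\overline{\R}$ --- indeed $\bigcap_k g^k(\overline D)$ is a single point by Schwarz--Pick, this is the unique fixed point of $g$ in $D$, and neither an elliptic element (which preserves the hyperbolic metric of $D$) nor a parabolic element (which preserves no round disk in this way) can do this. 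Granting these, let $\xi\in\rep(F_\A)$, pick $h_n\in\langle\mathcal G\rangle$ with $h_n(z_0)\to\xi$, and write $h_n=f_{M_n}^{-1}$ with $M_n\in\A^*$. Then $\|M_n\|\to\infty$ (otherwise $h_n(z_0)$ stays in a compact subset of $\H$), and the lemma yields $h_n\to\xi$ locally uniformly on $\overline\C\setminus\{p\}$ and $h_n^{-1}\to p$ locally uniformly on $\overline\C\setminus\{\xi\}$ for some $p\in\overline{\R}$ (the value $\xi$ being forced since $h_n(z_0)\to\xi$ and $z_0\notin\overline{\R}$). If $p\neq\xi$: for large $n$ the map $h_n$ carries the closure of a fixed small disk about $\xi$ strictly inside it, so $h_n$ is hyperbolic with attracting fixed point $\zeta_n$ in that disk; then $h_n^{-1}=f_{M_n}\in\langle F_\A\rangle$ is hyperbolic with repelling fixed point $\zeta_n$, and letting the disk shrink we obtain hyperbolic elements of $\langle F_\A\rangle$ with repelling fixed points tending to $\xi$, so $\xi$ lies in the required closure. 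If $p=\xi$: replace $h_n$ by $\tilde h_n=h_n f_A^{-1}$ for a generator with $f_A(\xi)\neq\xi$; this keeps $\tilde h_n(z_0)\to\xi$ while moving the exceptional point to $f_A(\xi)\neq\xi$, reducing to the previous case.

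\emph{Main obstacle.} The argument above leaves exactly one case untreated: every $f_A$ with $A\in\A$ fixes $\xi$, i.e.\ $\A$ is reducible with common fixed point $\xi$. Conjugating $\xi$ to $\infty$ makes every element of $\langle F_\A\rangle$ affine, $z\mapsto az+b$ with $a>0$, and one then has to verify the statement directly: reading off $\rep(F_\A)$ and the set of repelling fixed points in terms of the coefficients, a finite accumulation point $\xi'$ of the orbit is matched by composing, on the left, a hyperbolic element of $\A^*$ (which exists by hypothesis) with a sequence realising $\xi'$; the point $\infty$ is treated separately depending on whether $\langle F_\A\rangle$ contains a contracting affine map or only parabolic ones. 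I expect this reducible case, together with the bookkeeping of exceptional points in the $p=\xi$ step, to be the only genuinely delicate part: in the irreducible regime the degeneration argument is robust, whereas under a common fixed point one must argue by hand that even parabolic-type escape of the orbit still produces repelling fixed points of \emph{honest} hyperbolic elements.

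\emph{Perfectness.} Suppose $\rep(F_\A)$ is infinite and, for contradiction, $\xi\in\rep(F_\A)$ is isolated. By (ii) some repelling fixed point of a hyperbolic $g_0\in\langle F_\A\rangle$ lies arbitrarily close to $\xi$, hence equals $\xi$ by isolation. Then $g_0^{-1}\in\langle\mathcal G\rangle$ is hyperbolic with attracting fixed point $\xi$, and $\rep(F_\A)$, being the forward limit set of $\mathcal G$, is mapped into itself by $g_0^{-1}$. Choose $\eta\in\rep(F_\A)$ distinct from $\xi$ and from the attracting fixed point of $g_0$ (possible since $\rep(F_\A)$ is infinite); then $g_0^{-n}(\eta)\in\rep(F_\A)\setminus\{\xi\}$ for all $n$ and $g_0^{-n}(\eta)\to\xi$, contradicting isolation. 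Hence $\rep(F_\A)$ has no isolated points.
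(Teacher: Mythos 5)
The paper does not prove Theorem~\ref{rep} at all: it is imported verbatim from Fried--Marotta--Stankewitz (the reference \cite{frmast}, Theorem~2.4, Proposition~2.6 and Remark~2.20), so there is no paper proof to compare against. Your proposal is therefore a self-contained alternative, and it is worth assessing on its own terms.

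The overall structure is sound. Identifying $\rep(F_\A)$ with the forward limit set of the inverse semigroup, proving the easy inclusion (i) via Denjoy--Wolff/iteration of a single hyperbolic, and the perfectness argument (pushing a second point $\eta\in\rep(F_\A)$ towards $\xi$ by the inverse of a hyperbolic whose repelling fixed point is $\xi$) are all correct. The degeneration lemma via Cartan decomposition is correct as stated, and the ``disk" fact (a M\"obius map in $\mathrm{PSL}(2,\R)$ sending $\overline D$ strictly into a round disk $D$ must be hyperbolic with attracting fixed point in $D\cap\overline{\R}$) is also correct, though the parenthetical justification for why an elliptic cannot do this is slightly off --- an elliptic need not preserve the hyperbolic metric of the given disk $D$; the cleaner argument is that $g^k$ converges uniformly on $g(\overline D)$ by Schwarz--Pick while the iterates of a non-trivial elliptic never converge pointwise. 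The $p=\xi$ remedy of replacing $h_n$ by $h_nf_A^{-1}$ with $f_A(\xi)\neq\xi$ is correct, since $h_nf_A^{-1}(z_0)\to\xi$ by local uniform convergence and $(h_nf_A^{-1})^{-1}=f_Ah_n^{-1}\to f_A(\xi)\neq\xi$ away from $\xi$, and $f_Ah_n^{-1}\in\langle F_\A\rangle$ as required.

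The genuine gap is the case you flag yourself: when $\xi$ is fixed by every generator. There the two-sentence sketch is not a proof, and the dichotomy you propose is misstated: after conjugating $\xi$ to $\infty$ every element of $\langle F_\A\rangle$ is $z\mapsto az+b$ with $a>0$, and since by hypothesis $\A^*$ contains a hyperbolic matrix, the alternative ``only parabolic ones'' simply cannot occur. The actual argument that is missing is the following. If some $g\in\langle F_\A\rangle$ has multiplier $a<1$, then $\infty$ is already a repelling fixed point of a hyperbolic element and one is done. Otherwise every $g\in\langle F_\A\rangle$ has $a\geq 1$ (this is forced since multipliers multiply and a generator with $a<1$ would itself be a contracting hyperbolic). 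Then $\infty\in\rep(F_\A)$ produces a sequence $g_n(z)=a_nz+b_n$ in $\langle F_\A\rangle$ with $g_n^{-1}(i)=(i-b_n)/a_n\to\infty$, which together with $a_n\geq1$ forces $|b_n|/a_n\to\infty$, hence $|b_n|\to\infty$. If $a_n>1$, the repelling fixed point of $g_n$ is $b_n/(1-a_n)$, whose modulus exceeds $|b_n|/a_n\to\infty$; if $a_n=1$, precompose with a fixed hyperbolic $g_0(z)=cz+d$ ($c>1$) from $\langle F_\A\rangle$ to get $g_0g_n(z)=cz+(cb_n+d)$, with repelling fixed point $(cb_n+d)/(1-c)\to\infty$. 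Either way the repelling fixed points of hyperbolic elements of $\langle F_\A\rangle$ accumulate at $\infty$. Without this computation the common-fixed-point case is unproved, so as written the proposal has a real, if acknowledged, gap.
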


An immediate consequence of Theorem~\ref{rep} is the following characterisation of the forward limit set.

\begin{cor}\label{att}
Let $\A\subseteq\SL$ be such that $\A^*$ contains hyperbolic matrices. Then $\att(F_\A)$ is the smallest closed set in $\overline{\R}$ containing all attracting fixed points of hyperbolic transformations in $\langle F_\A \rangle$. If, in addition, $\att(F_\A)$ is infinite then it is a perfect set.
\end{cor}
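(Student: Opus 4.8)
The plan is to derive Corollary~\ref{att} from Theorem~\ref{rep} by conjugating the whole picture by the orientation-reversing involution $x \mapsto -x$ on $\overline{\R}$, which on the upper half-plane side corresponds to the reflection $z \mapsto -\bar z$. More precisely, write $\sigma(x) = -x$ and observe that for any $A \in \SL$ one has $\sigma \circ f_A \circ \sigma = f_{\tilde A}$ where $\tilde A$ is obtained from $A$ by flipping the signs of the off-diagonal entries; equivalently $\tilde A = J A J$ with $J = \mathrm{diag}(1,-1)$, which still lies in $\SL$. Set $\widetilde{\A} = \{\tilde A : A \in \A\} = J\A J$.

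First I would record the elementary algebraic facts: the map $A \mapsto \tilde A$ is a semigroup anti-automorphism composed with... actually it is a semigroup \emph{homomorphism} on $\SL$ (since $\widetilde{AB} = JABJ = (JAJ)(JBJ) = \tilde A \tilde B$), it sends $\widetilde{\A^*} = (\widetilde{\A})^*$, it preserves traces (so hyperbolic matrices go to hyperbolic matrices), and on the level of M\"obius maps it intertwines $f_A$ with $f_{\tilde A}$ via $\sigma$. Consequently $\langle F_{\widetilde{\A}} \rangle = \sigma \langle F_\A \rangle \sigma$, and if $h \in \langle F_{\widetilde{\A}}\rangle$ is hyperbolic with attracting fixed point $a$ and repelling fixed point $r$, then $\sigma h \sigma \in \langle F_\A \rangle$ is hyperbolic with attracting fixed point $\sigma(a)=-a$ and repelling fixed point $\sigma(r)=-r$ (conjugating by a homeomorphism preserves the classification of fixed points as attracting/repelling since it preserves the derivative condition up to the bounded factor coming from the chart, or more simply since it preserves the multiplier).

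Next I would apply Theorem~\ref{rep} to the set $\widetilde{\A}$. Since $\A^*$ contains a hyperbolic matrix, so does $(\widetilde{\A})^* = \widetilde{\A^*}$ (traces are preserved), so Theorem~\ref{rep} gives that $\rep(F_{\widetilde{\A}})$ is the smallest closed subset of $\overline{\R}$ containing all repelling fixed points of hyperbolic transformations in $\langle F_{\widetilde{\A}}\rangle$, and is perfect if infinite. Now I claim $\att(F_\A) = \sigma(\rep(F_{\widetilde{\A}}))$. Indeed, by Definition~\ref{deflim}, $\rep(F_{\widetilde{\A}})$ is the forward limit set of $\{f_{\tilde A}^{-1}\}$; but $f_{\tilde A}^{-1} = \sigma \circ f_A^{-1} \circ \sigma$ wait — I want to be careful and instead argue directly from orbits: $\att(F_\A) = \overline{F_\A(i)} \cap \overline{\R}$, and applying $\sigma$ (which fixes $\overline{\R}$ setwise, fixes $i$ since $-\bar i = i$, and is a homeomorphism of $\overline{\H}$) gives $\sigma(\att(F_\A)) = \overline{\sigma F_\A(i)} \cap \overline{\R} = \overline{F_{\widetilde{\A}}(i)}\cap \overline{\R} = \att(F_{\widetilde{\A}})$. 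So it suffices to relate $\att(F_{\widetilde{\A}})$ to $\rep(F_{\widetilde{\A}})$; but actually the cleanest route avoids even that: replace $\widetilde{\A}$ above by the inverse set. Let me reorganize — apply Theorem~\ref{rep} not to $\widetilde\A$ but observe that $\att(F_\A)$, being forward-invariant, should be expressible as a backward limit set of an auxiliary family. Concretely, consider $\mathcal{B} := \{J A^{-1} J : A \in \A\}$; then $f_B = \sigma \circ f_A^{-1} \circ \sigma$, so $\langle F_{\mathcal B}\rangle = \sigma\{g^{-1} : g \in \langle F_\A\rangle\}\sigma$, and $\rep(F_{\mathcal B})$ is by definition the forward limit set of $\{f_B^{-1}\} = \{\sigma f_A \sigma\}$, which is $\sigma(\att(F_\A))$ by the orbit computation above. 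Applying Theorem~\ref{rep} to $\mathcal B$ — valid since $\mathcal B^*$ contains a hyperbolic matrix whenever $\A^*$ does, because $A$ hyperbolic implies $A^{-1}$ hyperbolic implies $JA^{-1}J$ hyperbolic — identifies $\rep(F_{\mathcal B})$ as the smallest closed set containing repelling fixed points of hyperbolic maps in $\langle F_{\mathcal B}\rangle$, and perfect if infinite. Finally, I push this back through $\sigma$: $r$ is a repelling fixed point of $h \in \langle F_{\mathcal B}\rangle$ iff $\sigma(r)$ is a repelling fixed point of $\sigma h \sigma \in \sigma\langle F_{\mathcal B}\rangle\sigma = \{g^{-1}: g\in\langle F_\A\rangle\}$, iff $\sigma(r)$ is an \emph{attracting} fixed point of some $g \in \langle F_\A\rangle$. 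Since $\sigma$ is a homeomorphism of $\overline{\R}$, it carries "smallest closed set" to "smallest closed set" and "perfect" to "perfect", yielding exactly the statement of Corollary~\ref{att}.

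The only genuinely delicate point is the bookkeeping of attracting versus repelling fixed points under the conjugations and inversions — I expect that to be the main obstacle, in the sense of the place where an error is easiest to make. Everything hinges on two facts: (i) conjugating a M\"obius map by a homeomorphism of $\overline{\R}$ swaps nothing, it transports the attracting point to the image of the attracting point (because it conjugates the multiplier to itself); and (ii) \emph{inverting} a hyperbolic M\"obius map interchanges its attracting and repelling fixed points (the multiplier becomes its reciprocal). Composing these two operations once, as in the passage from $\langle F_\A\rangle$ to $\langle F_{\mathcal B}\rangle$, produces exactly one swap, which is precisely what converts the "repelling" in Theorem~\ref{rep} into the "attracting" in Corollary~\ref{att}. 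I would state (i) and (ii) as a one-line observation and then let the orbit identity $\sigma(\att(F_\A)) = \rep(F_{\mathcal B})$ do the rest; no further computation is needed.
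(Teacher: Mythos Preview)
Your argument is correct, but the conjugation by $\sigma$ is an unnecessary detour. The paper treats this corollary as an ``immediate consequence'' of Theorem~\ref{rep}, and the intended one-line derivation is simply to apply Theorem~\ref{rep} to the inverse set $\A^{-1}=\{A^{-1}:A\in\A\}\subseteq\SL$. Indeed, by Definition~\ref{deflim} the backward limit set of $\A^{-1}$ is the forward limit set of $\{(f_{A^{-1}})^{-1}:A\in\A\}=F_\A$, i.e.\ $\rep(F_{\A^{-1}})=\att(F_\A)$; since $(\A^{-1})^*$ contains hyperbolic matrices whenever $\A^*$ does, Theorem~\ref{rep} identifies $\att(F_\A)$ as the smallest closed set containing the repelling fixed points of hyperbolic maps in $\langle F_{\A^{-1}}\rangle=\{g^{-1}:g\in\langle F_\A\rangle\}$, which are exactly the attracting fixed points of hyperbolic maps in $\langle F_\A\rangle$.

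Your route builds $\mathcal{B}=\{JA^{-1}J:A\in\A\}$ and pushes everything through $\sigma(x)=-x$, but since $\sigma$ appears symmetrically in both the orbit identity $\rep(F_{\mathcal B})=\sigma(\att(F_\A))$ and in the fixed-point correspondence, it cancels and contributes nothing. In effect you have rediscovered the $\A^{-1}$ argument with an extra involution threaded through it. Your careful bookkeeping of attracting versus repelling under conjugation and inversion is correct (conjugation by a homeomorphism transports each type to itself; inversion swaps them), but only the inversion is actually doing work here. Dropping $J$ and $\sigma$ altogether collapses your proof to the paper's intended two-sentence observation.
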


\subsection{Attractor versus limit set}
Using \eqref{psieq}, we are going show that the attractor of the IFS $\Phi_\A$ and the forward limit set of $F_\A$ are closely related, as stated in Theorem~\ref{attlimset} to follow. Let us first establish the following lemma.

\begin{lma}\label{reduc}
For any set $\A\subseteq\SL$ the following hold:
\begin{enumerate}
\item If $\#K_\A=1$ then $\A^*$ does not contain any elliptic matrices; whereas
\item if $\#K_\A>1$ then the semigroup $\A^*$ contains a hyperbolic matrix.
\end{enumerate}
\end{lma}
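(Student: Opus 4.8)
The plan is to analyze the dynamical structure of $\A^*$ by distinguishing cases based on what kinds of matrices it contains, and to relate this to the size of $K_\A$ via the correspondence $\psi$ and the limit set results (Theorem \ref{rep}, Corollary \ref{att}) already available.

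For part (1), I would argue the contrapositive: if $\A^*$ contains an elliptic matrix $E$, then $\#K_\A \neq 1$. Suppose for contradiction that $K_\A = \{x_0\}$ is a singleton. Note first that $K_\A$ nonempty means $\A^*$ contains a hyperbolic or parabolic matrix $A$, whose distinguished fixed point must be $x_0$ (every attracting/parabolic fixed point of an element of $\langle\Phi_\A\rangle$ lies in $K_\A$). The key observation is that $\phi_E$ is a nontrivial rotation of $\R\P^1$ with no fixed points, so $\phi_E(x_0) \neq x_0$. But then consider the composition $EAE^{-1}$... — actually, since we only have a semigroup, we cannot invert. Instead I would use powers: since $E$ is elliptic of some (possibly irrational) rotation number, the products $E^k A$ lie in $\A^*$, and one checks that for suitable $k$ the matrix $E^k A$ (being a product of an elliptic with a hyperbolic/parabolic) is itself hyperbolic or parabolic with a distinguished fixed point that has been rotated away from $x_0$; more carefully, $A E^k \in \A^*$ has distinguished fixed point equal to $\phi_A(\text{something})$ — the cleanest route is: the attracting fixed point of $AE^k$ equals $\phi_A$ applied to the attracting fixed point of $E^k A$, and by varying $k$ one produces at least two distinct fixed points in $K_\A$, contradicting the singleton assumption. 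I would carry this out by working in $F_\A$ on $\overline\R\cup\H$ via \eqref{psieq}: the forward limit set $\att(F_\A) = \overline{F_\A(i)}\cap\overline\R$ is forward-invariant under $F_\A$, so it is invariant under the elliptic (parabolic in $\mathrm{PSL}(2,\R)$ sense — rather, elliptic rotation fixing a point of $\H$) map $f_E$; an $f_E$-invariant closed subset of $\overline\R$ that is a single point is impossible unless $f_E$ fixes that point, which an elliptic element does not.

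For part (2), suppose $\#K_\A > 1$. If $\A^*$ contained no hyperbolic matrices, then by semidiscreteness considerations (though we are not assuming semidiscreteness here) $\A^*$ consists only of parabolic and elliptic matrices. I would split into subcases. If $\A^*$ contains an elliptic matrix, I would use it together with any parabolic matrix to produce a hyperbolic one: a product of a parabolic and an elliptic rotation generically has trace exceeding $2$ in absolute value, and one can choose the elliptic power to force this (a short trace computation: if $P$ is parabolic and $R_\theta$ elliptic, $\tr(P R_\theta)$ varies with $\theta$ and exceeds $2$ for a range of $\theta$). If $\A^*$ contains no elliptic matrices, then it contains only parabolic matrices; but the set of parabolic matrices is not closed under multiplication in general, and moreover if every element of $\A^*$ were parabolic, each would fix a single point of $\R\P^1$ — one then shows they must all share the same fixed point (two parabolics with distinct fixed points multiply to something non-parabolic, again a trace computation), forcing $K_\A$ to be that single common fixed point, contradicting $\#K_\A > 1$. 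Hence $\A^*$ must contain a hyperbolic matrix.

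The main obstacle I anticipate is the bookkeeping in the "product of parabolic/elliptic with something is hyperbolic" arguments: one needs to rule out degenerate coincidences (e.g. the elliptic rotation happening to conjugate one parabolic fixed point exactly onto another, or cancellations making the product parabolic for every relevant power). The cleanest way to handle this is uniformly via the trace function $\theta \mapsto \tr(AR^k)$ or $\theta\mapsto\tr(PR_\theta)$ and the intermediate value / non-constancy of these real-analytic functions, together with the elementary fact that a parabolic or elliptic matrix has infinite order only if elliptic with irrational rotation, in which case its powers are dense among rotations and one has even more room. I would also lean on Theorem \ref{rep} and Corollary \ref{att} to translate "$K_\A$ has more than one point" into "$\att(F_\A)$ is infinite, hence perfect," which immediately supplies many hyperbolic fixed points and sidesteps some casework in part (2).
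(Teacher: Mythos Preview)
Your overall strategy is in the right neighbourhood, but there is a genuine gap in your argument for part (1), and a circularity in your suggested shortcut for part (2).

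\textbf{The gap in part (1).} Your ``cleanest route'' argues that $\att(F_\A)$ is forward-invariant under $f_E$, and that a singleton forward-invariant set would have to be fixed by $f_E$. The problem is that you are only given $\#K_\A=1$, not $\#\att(F_\A)=1$. The identification $\psi(K_\A)=\att(F_\A)$ is precisely Theorem~\ref{attlimset}, which is proved \emph{after} (and using) the present lemma, so you cannot invoke it here. In fact, in the situation you are trying to rule out --- $K_\A=\{x_0\}$ together with an elliptic $E\in\A^*$ --- there is no reason a priori for $\att(F_\A)$ to be a singleton; indeed the presence of an infinite-order elliptic can make orbits dense in $\overline\R$. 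A viable repair would be to establish forward-invariance of $K_\A$ itself directly from the definition (for hyperbolic $A$ with attracting fixed point $a$ and any $B\in\A$, the attracting fixed point of $BA^n$ converges to $\phi_B(a)$, so $\phi_B(a)\in K_\A$, and similarly for parabolic fixed points), but you do not do this. The paper instead handles the elliptic $B\in\A^*$ by splitting on its order: if $B$ has finite order then $B^{-1}\in\A^*$ as a power of $B$, so $B^{-1}A_0B\in\A^*$ has the same type as $A_0$ but a distinct fixed point; if $B$ has infinite order then one cites a density result \cite{babeca} for $\langle f_B,f_{A_0}\rangle$ in $\mathrm{PSL}(2,\R)$, which immediately produces hyperbolics with distinct attracting fixed points. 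Your approach (b) via $E^kA$ essentially reduces to the paper's argument in the finite-order case, but does not handle infinite order without a similar external input.

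\textbf{Part (2).} Your argument is essentially the paper's, but with an unnecessary elliptic subcase and a circular final suggestion. By definition $K_\A$ records only attracting and parabolic fixed points, so ``no hyperbolics in $\A^*$ and $\#K_\A>1$'' already forces two parabolics $A_1,A_2\in\A^*$ with distinct fixed points; there is no need to ask whether $\A^*$ contains elliptics. A trace computation (as you indicate) then shows $A_1^nA_2^m$ is hyperbolic for suitable $n,m$, which is exactly what the paper does. Your final suggestion to lean on Corollary~\ref{att} to ``sidestep casework'' is circular: Corollary~\ref{att} \emph{assumes} $\A^*$ contains hyperbolics, which is precisely the conclusion of part (2), and again the translation between $K_\A$ and $\att(F_\A)$ is only available after this lemma.
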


\begin{proof}
We start with the proof of \emph{(1)}. Since $K_\A\neq\emptyset$, the semigroup $\A^*$ has to contain a matrix $A_0$ that is either parabolic or hyperbolic. Assume that there exists an elliptic matrix $B\in\A^*$. If $B$ is of finite order, then the matrices $B^{-1}A_0B\in\A^*$ and $A_0$ are either both parabolic or both hyperbolic and the projective maps $\phi_{A_0}$ and $\phi_{B_1^{-1}A_0B_1}$ have distinct fixed points in $\R\P^1$. Thus $\#K_\A\geq 2$ which is a contradiction. For the case where $B$ is elliptic of infinite order, we refer to \cite[Theorem 3]{babeca} which states that if $B$ does not commute with a matrix $C\in\SL$, then the semigroup $\langle f_B,f_C\rangle$ is dense in $\mathrm{PSL}(2,\R)$. Thus, since $B$ and $A_0$ do not commute, the semigroup $\langle B, A_0\rangle$ contains hyperbolic matrices that induce projective maps with distinct fixed points, and we are led to the same contradiction.\\
For the proof of \emph{(2)} assume that $\A^*$ does not contain any hyperbolic matrices. Since $\#K_\A>1$ by assumption, $\A^*$ has to contain at least two parabolic matrices, say $A_1$ and $A_2$, such that the projective maps $\phi_{A_1}$ and $\phi_{A_2}$ have distinct fixed points in $\R\P^1$. It is then easy to check that there exist $n,m\in\N$ such that $A_1^nA_2^m$ is a hyperbolic matrix, contradicting our assumption.
\end{proof}

\begin{thm}\label{attlimset}
Suppose $\A\subseteq \SL$ is such that $K_\A$ is non-empty. Then $\psi(K_\A)=\att(F_\A)$.
\end{thm}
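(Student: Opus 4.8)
The plan is to show the two inclusions $\psi(K_\A) \subseteq \att(F_\A)$ and $\att(F_\A) \subseteq \psi(K_\A)$ separately, using the conjugacy \eqref{psieq} to transport fixed points between the two pictures, and then invoking Lemma~\ref{reduc} together with Corollary~\ref{att} to pin down which points actually lie in each set. The key bookkeeping fact is that because $\psi \circ \phi_A = f_A \circ \psi$, the map $\psi$ sends fixed points of $\phi_A$ to fixed points of $f_A$, and moreover it preserves the attracting/repelling/parabolic trichotomy (since $\psi$ is a diffeomorphism of $(0,\pi)$ onto $\R$ with locally bounded derivative in both directions, conjugation does not change whether the derivative at a fixed point is $<1$, $=1$, or $>1$). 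The same holds for any element $A \in \A^*$, i.e.\ for every map $\phi_A$ in the semigroup $\langle \Phi_\A \rangle$, which is exactly the generating data for both $K_\A$ and $\att(F_\A)$.

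First I would dispose of the degenerate case $\#K_\A = 1$. By Lemma~\ref{reduc}(1), $\A^*$ contains no elliptic matrices but does contain a parabolic or hyperbolic matrix $A_0$; since $K_\A$ is a single point it must be the common fixed point of all the parabolic/hyperbolic maps in the semigroup (any two distinct fixed points would force $\#K_\A \geq 2$). Transporting through $\psi$, the point $\psi(K_\A)$ is then the common fixed point of the corresponding M\"obius maps, and one checks directly that the orbit $\overline{F_\A(i)} \cap \overline{\R}$ reduces to that single point, so $\att(F_\A) = \psi(K_\A)$. (Here one uses that the orbit of $i$ under a semigroup of M\"obius maps all sharing a fixed point $x_0 \in \overline{\R}$ accumulates only at $x_0$; this is a short hyperbolic-geometry computation, or can be seen by conjugating $x_0$ to $\infty$.)

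For the main case $\#K_\A > 1$, Lemma~\ref{reduc}(2) guarantees $\A^*$ contains a hyperbolic matrix, so Corollary~\ref{att} applies: $\att(F_\A)$ is the smallest closed subset of $\overline{\R}$ containing all attracting fixed points of hyperbolic maps in $\langle F_\A \rangle$. Now $\psi(K_\A)$ is closed (as $\psi$ is a homeomorphism and $K_\A$ is closed) and, by the conjugacy, it contains every attracting fixed point of every hyperbolic map $f_A$, $A \in \A^*$ — because $\phi_A$ is then hyperbolic with attracting fixed point in $K_\A$, and $\psi$ carries it to the attracting fixed point of $f_A$. Minimality of $\att(F_\A)$ gives $\att(F_\A) \subseteq \psi(K_\A)$. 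For the reverse inclusion I would argue that $\psi(K_\A)$ is the \emph{smallest} closed set containing all attracting fixed points of hyperbolic maps \emph{and} all parabolic fixed points of parabolic maps in $\langle F_\A \rangle$; since $\att(F_\A)$ is closed and forward-invariant under $F_\A$ and contains all the hyperbolic attracting fixed points, the only thing to check is that it also contains every parabolic fixed point $p(f_A)$. This is the one genuinely nontrivial point: given a parabolic $f_A$ with fixed point $p$, one shows $p \in \att(F_\A)$ by producing hyperbolic elements of $\langle F_\A\rangle$ whose attracting fixed points converge to $p$ — e.g.\ if $\A^*$ also contains a hyperbolic (or another parabolic) element $g$ with a fixed point distinct from $p$, then $f_A^n g$ is hyperbolic for large $n$ with attracting fixed point tending to $p$, exactly as in the proof of Lemma~\ref{reduc}(2); since $\att(F_\A)$ is closed this forces $p \in \att(F_\A)$. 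Pulling back through $\psi^{-1}$ then yields $\psi(K_\A) \subseteq \att(F_\A)$, completing the proof.

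I expect the main obstacle to be this last step — checking that $\att(F_\A)$ contains all parabolic fixed points — since it requires showing one can always approximate a parabolic fixed point by hyperbolic attracting fixed points within the semigroup, which needs a small case analysis depending on what else sits in $\A^*$ (and uses that $K_\A$, hence the semigroup's fixed-point set, is not a singleton). The rest is essentially a transport-of-structure argument through the conjugacy $\psi$, combined with the characterisations already established in Lemma~\ref{reduc} and Corollary~\ref{att}.
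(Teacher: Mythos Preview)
Your argument is correct, but you make the inclusion $\psi(K_\A)\subseteq\att(F_\A)$ harder than it needs to be. The paper proves this inclusion \emph{first} and more directly: since $\att(F_\A)$ is by Definition~\ref{deflim} the closure of the orbit of $i$ intersected with $\overline{\R}$, it automatically contains every attracting fixed point of a hyperbolic $f_A$ \emph{and} every fixed point of a parabolic $f_A$, because in both cases $f_A^n(i)\to\psi(x)$ in $\overline{\H}$. Transporting through $\psi$ and taking closures gives $\psi(K_\A)\subseteq\att(F_\A)$ in one line, with no need for the approximation of parabolic fixed points by hyperbolic attracting ones that you flag as the ``main obstacle''. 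The paper then handles the reverse inclusion essentially as you do: in the case $\#K_\A>1$, Lemma~\ref{reduc}(2) supplies a hyperbolic element so Corollary~\ref{att} applies, giving $\att(F_\A)\subseteq\psi(K_\A)$ by minimality; the singleton case is dispatched directly.

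So the difference is one of economy: the paper exploits the orbit-closure definition of $\att(F_\A)$ to absorb parabolic fixed points for free, whereas you route through Corollary~\ref{att} (which only speaks of hyperbolic attracting points) and then have to recover the parabolic ones by an approximation argument. Your $f_A^n g$ trick works, but it duplicates effort already hidden in Definition~\ref{deflim}.
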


\begin{proof}
Due to our assumption, $\A^*$ has to contain at least one hyperbolic or parabolic matrix, and so $\att(F_\A)$ is also non-empty. Observe that if the map $\phi_A$ fixes the point $x\in(0,\pi]$, for some $A\in\SL$, then equation \eqref{psieq} implies that $f_A$ fixes $\psi(x)\in\overline{\R}$. Hence, if $x\in K_\A$ then $\psi(x)$ is either the attracting fixed point of $f_A$, if the transformation is hyperbolic, or the unique fixed point of $f_A$, if the transformation is parabolic. It follows that $\psi(x)\in\att(F_\A)$, and so $\psi(K_\A)\subseteq \att(F_\A)$.\\
Note that if $\A^*$ contains a hyperbolic matrix then the other inclusion follows immediately from Corollary~\ref{att}. So, if $\#K_\A>1$ then we have nothing to prove due to Lemma~\ref{reduc}. If, on the other hand, $K_\A=\{x_0\}$, for some $x_0\in\R\P^1$ then all the hyperbolic and parabolic maps in $\Phi_\A$ have $x_0$ as their attracting or unique fixed point, in which case $\psi(K_\A)=\att(F_\A)=\{\psi(x_0)\}$.
\end{proof}

One can show that $K_\A$ is empty if and only if $\att(F_\A)$ is empty, and thus the equality $\psi(K_\A)=\att(F_\A)$ in Theorem \ref{attlimset} holds for any set $\A\subseteq \SL$. We omit the proof of this fact since we do not require this stronger version of the theorem.

Theorem~\ref{attlimset} allows us to utilise the theory of M\"obius semigroups in order to show that the attractor of a projective IFS $\Phi_\A$ shares many standard properties of attractors of iterated function systems.

\begin{thm}\label{dense}
Let $\A\subseteq \SL$ be such that $K_\A$ is non-empty. The attractor $K_\A$ of $\Phi_\A$ satisfies the following properties:
\begin{enumerate}
\item $K_\A=\Phi_\A(K_\A)$,
\item If $x\in K_\A$ is not fixed by every element of $\Phi_\A$, then $K_\A=\overline{\bigcup_{n\in\N} \Phi_{\A^n}(\{x\})}$,
\item If $K_\A$ is infinite, then it is a perfect set.
\end{enumerate}
\end{thm}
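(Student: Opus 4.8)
The plan is to transport all three statements, via the bijection $\psi$ satisfying \eqref{psieq} and Theorem~\ref{attlimset} (which gives $\psi(K_\A)=\att(F_\A)$), into assertions about the forward limit set of the M\"obius semigroup $\langle F_\A\rangle$, where the fixed-point characterisation and perfectness of Corollary~\ref{att}, together with the forward-invariance of $\att(F_\A)$, are available. Since $\psi$ is a homeomorphism of $\R\P^1$ onto $\overline{\R}$, topological properties such as perfectness pass freely between $K_\A$ and $\att(F_\A)$; and since $\psi\circ\phi_A=f_A\circ\psi$, orbits of $\langle\Phi_\A\rangle$ correspond to orbits of $\langle F_\A\rangle$, so that $\psi\big(\overline{\bigcup_n\Phi_{\A^n}(\{x\})}\big)=\overline{F_\A(\psi(x))}$ and $\psi(\Phi_\A(K_\A))=F_\A(\att(F_\A))$, where $F_\A(z)=\{f_A(z):A\in\A^*\}$.

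Granting this dictionary, \emph{(3)} is immediate: if $K_\A$ is infinite then so is $\att(F_\A)$, and as $\#K_\A>1$ forces $\A^*$ to contain a hyperbolic matrix (Lemma~\ref{reduc}(2)), Corollary~\ref{att} gives that $\att(F_\A)$ is perfect, hence so is $K_\A$. For \emph{(1)} it suffices to show $F_\A(\att(F_\A))=\att(F_\A)$; the inclusion $\subseteq$ is forward-invariance. For $\supseteq$ I would split a reduced word according to its first letter to get the self-referential identity $F_\A(i)=\bigcup_{A\in\A}f_A\big(\{i\}\cup F_\A(i)\big)$, then take closures in $\overline{\H}$, pull the closure inside the finite union, use that each $f_A$ is a homeomorphism of $\overline{\H}$ preserving $\H$ and $\overline{\R}$ (so $f_A(i)\in\H$), and intersect with $\overline{\R}$; this yields $\att(F_\A)=\bigcup_{A\in\A}f_A(\att(F_\A))$. (Alternatively one can argue via Corollary~\ref{att}: a cyclic permutation of a reduced word defining a hyperbolic $f_C=f_{C_1}\circ\cdots\circ f_{C_n}\in\langle F_\A\rangle$ produces the hyperbolic element $f_{C_1}^{-1}f_Cf_{C_1}\in\langle F_\A\rangle$ with attracting fixed point $f_{C_1}^{-1}(a)\in\att(F_\A)$, whence $a\in f_{C_1}(\att(F_\A))\subseteq F_\A(\att(F_\A))$, and one closes up using that $F_\A(\att(F_\A))\subseteq\att(F_\A)$ is closed.) Translating back with $\psi$ gives $K_\A=\Phi_\A(K_\A)$.

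For \emph{(2)}, put $L:=\overline{\bigcup_n\Phi_{\A^n}(\{x\})}$ and $y:=\psi(x)$, so $\psi(L)=\overline{F_\A(y)}$. The inclusion $L\subseteq K_\A$ follows by iterating \emph{(1)}: $\Phi_{\A^n}(\{x\})\subseteq\Phi_{\A^n}(K_\A)=K_\A$ for every $n$, and $K_\A$ is closed. For $K_\A\subseteq L$: first, if $K_\A$ were a singleton $\{x\}$ then forward-invariance of $\att(F_\A)=\{y\}$ forces $f_A(y)=y$ for all $A\in\A$, i.e. $x$ is fixed by every element of $\Phi_\A$, contrary to hypothesis; hence $\#K_\A>1$ and, by Lemma~\ref{reduc}(2), $\langle F_\A\rangle$ contains a hyperbolic element $g$ with attracting fixed point $a$ and repelling fixed point $r$. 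Note $\overline{F_\A(y)}$ is closed and forward-invariant under all of $\langle F_\A\rangle$. Since $y$ is moved by some generator, $F_\A(y)\not\subseteq\{r\}$ (otherwise a short injectivity argument forces $y=r$ and $y$ to be a common fixed point of the generators), so $F_\A(y)$ contains a point $p\neq r$; then $g^n(p)\to a$ with $g^n(p)\in\overline{F_\A(y)}$, so $a\in\overline{F_\A(y)}$. Now for any hyperbolic $h\in\langle F_\A\rangle$ with attracting fixed point $a_h$ and repelling fixed point $r_h$, choose $q\in\overline{F_\A(y)}$ with $q\neq r_h$ (take $q=a$, or, if $a=r_h$, any other point of $\overline{F_\A(y)}$ — one exists, else $y$ is again a common fixed point); then $h^n(q)\to a_h$ forces $a_h\in\overline{F_\A(y)}$. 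An easier version of the same argument (a parabolic collapses every orbit to its fixed point) puts every parabolic fixed point of $\langle F_\A\rangle$ in $\overline{F_\A(y)}$. Thus $\overline{F_\A(y)}$ is a closed subset of $\overline{\R}$ containing every attracting fixed point of every hyperbolic element of $\langle F_\A\rangle$, so $\att(F_\A)\subseteq\overline{F_\A(y)}$ by the minimality in Corollary~\ref{att}; with forward-invariance this gives $\overline{F_\A(y)}=\att(F_\A)$, i.e. $L=K_\A$.

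The step I expect to be the real work is $K_\A\subseteq L$ in \emph{(2)} — a minimality statement for the orbit of a single non-fixed point of $K_\A$ — and within it the bookkeeping needed when a point of $K_\A$ happens to be the repelling fixed point of the hyperbolic map one wants to iterate, which must be circumvented by first displacing it with a generator (or with another semigroup element). A minor loose end, relevant only when $\A$ is infinite, is the closedness of $\Phi_\A(K_\A)$ in \emph{(1)}: the arguments above readily give $\overline{\Phi_\A(K_\A)}=K_\A$ and $\Phi_\A(K_\A)\subseteq K_\A$ in general, and the equality $\Phi_\A(K_\A)=K_\A$ when $\A$ is finite since then $\Phi_\A(K_\A)$ is a finite union of compact sets.
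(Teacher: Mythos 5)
Your framework---transporting all three statements via $\psi$ and Theorem~\ref{attlimset} into claims about $\att(F_\A)$, then using Corollary~\ref{att}, forward invariance, and Lemma~\ref{reduc}---is exactly the paper's, and your treatment of \emph{(3)} is identical to the paper's one-line deduction.

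For \emph{(1)}, both approaches decompose orbits by first letter, but the paper routes this through Lemma~\ref{comp}: any $y\in\att(F_\A)$ is a limit of $f_{B_n}(i)$ along a \emph{right-composition sequence} $(B_n)$, which by definition has a fixed first letter $A_1\in\A$, so one factors $B_n=A_1C_n$ and concludes $f_{A_1}^{-1}(y)\in\att(F_\A)$ directly, with no finiteness used. Both of your versions (pulling the closure through the union; cyclic permutation of a hyperbolic word) instead need $\bigcup_{A\in\A}f_A(\att(F_\A))$ to be closed, which is automatic only when $\A$ is finite. You flag this as a loose end, but the theorem is stated for arbitrary $\A\subseteq\SL$ and the paper explicitly emphasises that the results of this section cover infinitely generated IFS, so for infinite $\A$ this is a genuine gap---and Lemma~\ref{comp} is precisely the device the authors introduce to close it.

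For \emph{(2)}, your argument is a genuine alternative. The paper passes to the inverse semigroup $S$ generated by $F_{\A^{-1}}$ and invokes \cite[Proposition~2.16]{frmast} (the image of any open set under $S$ is all of $\overline{\R}$ or misses a single common-fixed point), then pulls $x$ into an arbitrary neighbourhood of $y$. You instead run the dynamics forward: after observing that $\overline{F_\A(y)}$ is closed, forward-invariant, and not a singleton, you iterate each hyperbolic (and parabolic) element of $\langle F_\A\rangle$ on a point of $\overline{F_\A(y)}$ avoiding its repelling fixed point to collect every attracting fixed point, and then invoke the minimality statement in Corollary~\ref{att}. This is correct, requires no finiteness on $\A$, and is arguably more self-contained since it bypasses the external \cite[Proposition~2.16]{frmast}; the paper's version buys a stronger topological statement about orbit density as a by-product, but the two reach the same conclusion.
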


In particular, the first two parts of Theorem~\ref{dense} imply that when $\A\subseteq \SL$ is finite and uniformly hyperbolic, then the set $K_\A$ coincides with the attractor of $\Phi_\A$ as this was defined in \cite{bv} and \cite{solomyak} (see part \textit{(4)} in Theorem \ref{abybv}). Hence, our definition of an attractor generalises the definition used in \cite{bv}, to a wider class of, potentially infinite, systems.

If $C \subseteq \R\P^1$ has the property that $C=\Phi_\A(C)$ we say that it is invariant under $\Phi_\A$, or that it is an invariant set. It is interesting to note that one could easily find examples where $\Phi_\A$ admits more than one non-empty, closed, proper, invariant set $C$. However, if $\A$ is irreducible and $K_\A$ is non-empty, then $K_\A$ is the ``smallest" non-empty, closed, invariant set. Here, by smallest we mean that if $C \subseteq \R\P^1$ is non-empty, closed and invariant, then $K_\A\subseteq C$. This follows from the fact that since $\A$ is irreducible, any non-empty invariant set has to contain all attracting fixed points of hyperbolic maps and all unique fixed points of parabolic maps in $\langle \Phi_\A \rangle$. The statement obviously fails if $\A$ is reducible (take, for example, a set $\A$ that contains only one hyperbolic matrix). This property is reminiscent of the definition of an attractor of an IFS consisting of contractions of $\R^n$  as the unique, non-empty, compact, invariant set \cite{hutch}.

Before we proceed with the proof of Theorem~\ref{dense}, we require the following lemma, where a right-composition sequence in $F_\A$ is defined as a sequence $(f_{B_n})\subseteq \langle F_\A \rangle$, with $B_n=A_{i_1} A_{i_2} \cdots A_{i_n}$ for some $(A_{i_n})\subseteq \A$.

\begin{lma}\label{comp}
Let $\A\subseteq \SL$ be such that $\att(F_\A)\neq \emptyset$. Then $\att(F_\A)$ is the set of accumulation points in $\overline{\R}$ of the set
\[
\{f_{B_n}(i)\colon (B_n)\, \text{is a right-composition sequence in}\, F_\A\}.
\]
\end{lma}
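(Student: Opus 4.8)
The plan is to identify the set appearing in the statement with the orbit $F_\A(i)$ and then read off the lemma from Definition~\ref{deflim}. Set
\[
S \vcentcolon= \{f_{B_n}(i) : (B_n)\text{ is a right-composition sequence in }F_\A,\ n\in\N\}.
\]
I claim $S = F_\A(i)$. The inclusion $S\subseteq F_\A(i)$ is immediate, since each $B_n$ is a product of $n$ elements of $\A$ and hence lies in $\A^*$. For the reverse inclusion, given $A\in\A^*$ write $A = A_{i_1}A_{i_2}\cdots A_{i_m}$ with each $A_{i_j}\in\A$; since $\att(F_\A)\neq\emptyset$ forces $\A\neq\emptyset$, we may extend the finite tuple $(A_{i_1},\dots,A_{i_m})$ to an infinite sequence $(A_{i_n})_{n\in\N}$ of elements of $\A$, and the associated right-composition sequence $(B_n)$ then satisfies $B_m = A$. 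Hence $f_A(i) = f_{B_m}(i)\in S$, so $F_\A(i)\subseteq S$.

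Next, taking $z_0 = i$ in Definition~\ref{deflim}, which is permissible because the forward limit set does not depend on the choice of $z_0\in\H$, we obtain $\att(F_\A) = \overline{F_\A(i)}\cap\overline{\R} = \overline{S}\cap\overline{\R}$. To finish it suffices to observe that $\overline{S}\cap\overline{\R}$ is precisely the set of accumulation points of $S$ lying in $\overline{\R}$: since every $f_A$ maps $\H$ into $\H$ we have $S\subseteq\H$, so $S\cap\overline{\R}=\emptyset$; thus no point of $\overline{S}\cap\overline{\R}$ belongs to $S$, so every such point is an accumulation point of $S$, and conversely every accumulation point of $S$ in $\overline{\R}$ lies in $\overline{S}\cap\overline{\R}$. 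Combining these equalities yields $\att(F_\A) = \{$accumulation points in $\overline{\R}$ of $S\}$, which is the assertion of the lemma.

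In short, the lemma is essentially a repackaging of the definition of $\att(F_\A)$ in the language of composition sequences, and there is no serious analytic obstacle. The only points requiring a little care are (i) that any finite product of elements of $\A$ can be completed to an \emph{infinite} admissible word, which is what allows the full orbit $F_\A(i)$, rather than a proper subset of it, to be recovered from right-composition sequences; and (ii) the (routine) identification of $\overline{S}\cap\overline{\R}$ with the accumulation points of $S$ in $\overline{\R}$, which uses only that $S$ is contained in $\H$ and hence avoids the boundary circle $\overline{\R}$.
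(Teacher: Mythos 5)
Your proof is correct for the literal reading of the statement and is considerably more direct than the paper's. You observe that every finite word over $\A$ is an initial segment of some infinite word, so the set $S$ in the lemma is precisely the orbit $F_\A(i)$; since $F_\A(i)\subseteq\H$ it misses $\overline{\R}$, hence $\overline{S}\cap\overline{\R}$ coincides with the accumulation points of $S$ in $\overline{\R}$, and the lemma collapses to the definition $\att(F_\A)=\overline{F_\A(i)}\cap\overline{\R}$. The paper instead routes through Lemma~\ref{reduc}, Corollary~\ref{att} and the Denjoy--Wolff theorem: it separately treats the singleton case, and otherwise shows that each attracting fixed point of a hyperbolic $f_A\in\langle F_\A\rangle$ is the limit of $(f_{A^n}(i))$ along the particular right-composition sequence $(A^n)$, then takes closures. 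The two arguments prove the same set equality, but yours exposes it as essentially tautological, whereas the paper's is constructive: it shows how points of $\att(F_\A)$ are reached along \emph{individual} right-composition sequences.

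That constructive content matters downstream. In the proof of Theorem~\ref{dense}~(1'), Lemma~\ref{comp} is invoked to pass from ``$y\in\att(F_\A)$'' to a sequence $(B_n)\subseteq\A^*$ with $f_{B_n}(i)\to y$ \emph{and} all $B_n$ sharing a common first letter $A_1\in\A$; the words of a single right-composition sequence automatically share their initial letters, but an arbitrary sequence extracted from the definition of $\att(F_\A)$ need not, and $\A$ is not assumed finite in Theorem~\ref{dense}, so pigeonholing the first letter is unavailable. Your argument, by reducing the lemma to the raw definition, only yields the existence of \emph{some} sequence in $\A^*$ converging to $y$, not one that lies along a single right-composition sequence. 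So while your proof settles the statement as written, you should be aware that the paper's proof is really aimed at establishing (and is later used as) the stronger assertion that every point of $\att(F_\A)$ is an accumulation point of the orbit $(f_{B_n}(i))_n$ along a \emph{single} right-composition sequence $(B_n)$, and that this stronger fact does not follow from your argument.
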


\begin{proof}
Let $C$ be the set of accumulation points of
\[
\{f_{B_n}(i)\colon (B_n)\, \text{is a right-composition sequence in}\, F_\A\},
\]
in $\overline{\R}$. By definition, we have that $C\subseteq \att(F_\A)$. We now show the other inclusion.\\
If $\att(F_\A)$ is a singleton $\{x\}$, then $x$ is either the attracting fixed point of a hyperbolic transformation or the unique fixed point of a parabolic transformation, and the result follows from the Denjoy--Wolff theorem and the fact that the iterates of a M\"obius transformation form a right-composition sequence.\\
Suppose that $\#\att(F_\A)>1$. Then Theorem~\ref{attlimset} and Lemma~\ref{reduc} imply that the semigroup $\langle F_\A \rangle$ has to contain hyperbolic transformations. So, due to Corollary~\ref{att}, $\att(F_\A)$ is the closure of attracting fixed points of hyperbolic transformations in $\langle F_\A \rangle$. Note that if $x\in\overline{\R}$ is the attracting fixed point of a hyperbolic transformation $f_A\in\langle F_\A \rangle$, then $f_{A^n}(i)\in C$, for all $n$ and the sequence $(f_{A^n}(i))$ converges to $x$. Thus $C$ contains all attracting fixed points of hyperbolic transformations in $\langle F_\A \rangle$, and the result follows from the fact that $C$ is closed as the intersection of closed sets in $\widehat{\C}=\C\cup\{\infty\}$.
\end{proof}

\begin{proof}[Proof of Theorem~\ref{dense}]
Part \emph{(3)} follows immediately by combining Corollary~\ref{att} with Theorem~\ref{attlimset} and Lemma~\ref{reduc}. Due to Theorem~\ref{attlimset} and the fact that the function $\psi$ in \eqref{psieq} is a bijection, it suffices to prove the following two properties for $\att(F_\A)$:
\begin{enumerate}[label=\emph{(\arabic*')}]
\item $\att(F_\A)=\bigcup_{A\in\A}f_A(\att(F_\A))$,
\item If $x\in \att(F_\A)$ is not fixed by every element of $F_\A$, then $\att(F_\A)=\overline{ F_\A (x) }$.
\end{enumerate}
We start with \textit{(1')}. The inclusion $\bigcup_{A\in\A} f_A(\att(F_\A))\subseteq \att(F_\A)$ follows from the  forward invariance of $\att(F_\A)$. Suppose that $y\in \att(\A)$. Then, there exists a sequence $(B_n)\subseteq \A^*$, such that $f_{B_n}(i)$ converges to $y$, as $n\to\infty$. Due to Lemma~\ref{comp}, we can find $A_1 \in \A$ and  $(C_n)\subseteq \langle F_\A \rangle$ such that $(B_n)$ can be written as $B_n=A_1C_n$, for all $n\in\N$. Hence, $f_{B_n}(i)=f_{A_1}(f_{C_n}(i))$, for all $n\in\N$, and thus $f_{C_n}(i)$ converges to $f_{A_1}^{-1}(y)$, as $n\to\infty$. So the point $f_{A_1}^{-1}(y)$ lies in $\att(F_\A)$, implying that $y\in f_{A_1}(\att(\A))$ as required.\\
For the second property, note that for any $x\in\att(F_\A)$, we have that $\overline{ F_\A (x)}$ is contained in $\att(F_\A)$, by forward invariance. Suppose that $x$ is a point in $\att(F_\A)$ that is not fixed by every element of $F_\A$ and $y\in\att(F_\A)$. The proof is complete upon showing that $y\in \overline{ F_\A (x)}$. Take an open neighbourhood $U$ of $y$ in $\overline{\R}$. Consider the set $\A^{-1}\vcentcolon = \{A_1^{-1}, A_2^{-1},\dots, A_N^{-1}\}$ and observe that $\att(F_\A)=\rep(F_{\A^{-1}})$. So, if $S$ is the semigroup generated by $F_{\A^{-1}}$, \cite[Proposition~2.16]{frmast} states that the image $S(U)$ of $U$ under $S$ is either the whole of $\overline{\R}$, or $\overline{\R}$ missing one point. In the latter case, it is easy to check that the singleton $\overline{\R}\setminus S(U)$ is fixed by every transformation in $F_{\A^{-1}}$. Since $x$ is not fixed by every element of $F_{\A^{-1}}$, we have that, in either case, $x\in S(U)$. Hence, there exists $f_C\in S$ such that $x\in f_C(U)$, or equivalently $f_C^{-1}(x)\in U$. As $f_C^{-1}\in\langle F_\A \rangle$, and the neighbourhood $U$ of $y$ was chosen arbitrarily, we see that there exists a sequence $(f_{D_n})\subseteq\langle F_\A \rangle$ such that $f_{D_n}(x)$ converges to $y$, as $n\to\infty$.
\end{proof}

\subsection{The repeller}

Motivated by Theorem~\ref{attlimset} and Definition~\ref{deflim}, we make the following definition.

\begin{defn}
For $\A\subseteq \SL$, we define $R_\A$ to be the smallest closed set containing all repelling fixed points of hyperbolic maps in $\langle \Phi_\A \rangle$ and all unique fixed points of parabolic maps in $\langle \Phi_\A \rangle$. The set $R_\A$, if non-empty, is called the \emph{repeller} of $\Phi_\A$.
\end{defn}

Observe that the repeller of an IFS induced by a set $\A\subseteq \SL$ is the attractor of the IFS induced by $\A^{-1}\vcentcolon=\{A^{-1}\colon A\in\A\}$. So it is easy to check that many of our arguments in the proof of Theorem~\ref{attlimset} and the third part of Theorem~\ref{dense} carry over to the repeller of $\Phi_\A$ and yield the following lemma.

\begin{lma}\label{replimset}
Let $\A\subseteq \SL$. The repeller of $\Phi_\A$ satisfies the following properties.
\begin{enumerate}
\item $R_\A=\emptyset \iff K_\A=\emptyset$,
\item If $R_\A\neq \emptyset$, then $R_\A\vcentcolon =\psi^{-1}(\rep(F_\A))$.
\item If $R_\A$ is infinite, then it is a perfect set.
\end{enumerate}
\end{lma}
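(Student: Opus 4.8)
The plan is to prove Lemma~\ref{replimset} by exploiting the symmetry between attractor and repeller and the work already done in the preceding subsections. The key observation, stated in the paragraph before the lemma, is that $R_\A = K_{\A^{-1}}$, that is, the repeller of $\Phi_\A$ is the attractor of the IFS induced by $\A^{-1} := \{A^{-1} : A \in \A\}$. Moreover, since taking inverses in $\SL$ corresponds to $f_A \mapsto f_A^{-1}$, we have $F_{\A^{-1}} = \{f_A^{-1} : A \in \A\}$ and hence, by Definition~\ref{deflim}, $\att(F_{\A^{-1}}) = \rep(F_\A)$. So each of the three claims about $R_\A$ is obtained by applying the corresponding already-proven statement about $K_{\A^{-1}}$ and $\att(F_{\A^{-1}})$, and then rewriting via these two identities.

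Concretely, for part \emph{(2)} I would invoke Theorem~\ref{attlimset} applied to the set $\A^{-1}$: whenever $K_{\A^{-1}}$ is non-empty, $\psi(K_{\A^{-1}}) = \att(F_{\A^{-1}})$. Substituting $K_{\A^{-1}} = R_\A$ and $\att(F_{\A^{-1}}) = \rep(F_\A)$ gives $\psi(R_\A) = \rep(F_\A)$, i.e. $R_\A = \psi^{-1}(\rep(F_\A))$, which is exactly \emph{(2)}. For part \emph{(1)}, Theorem~\ref{attlimset} (together with the remark after it, or directly from the emptiness equivalences used in its proof) gives that $K_{\A^{-1}}$ is empty iff $\att(F_{\A^{-1}})$ is empty; I would combine this with the analogous statement for $\A$ itself, $K_\A = \emptyset \iff \att(F_\A) = \emptyset$, and the elementary fact that $\att(F_\A) = \emptyset \iff \rep(F_\A) = \emptyset$ (a hyperbolic transformation in $\langle F_\A\rangle$ has both an attracting and a repelling fixed point, and a parabolic one contributes its fixed point to both limit sets; if $\langle F_\A \rangle$ contains neither then both limit sets are empty). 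Chaining these equivalences yields $R_\A = \emptyset \iff K_{\A^{-1}} = \emptyset \iff \att(F_{\A^{-1}}) = \emptyset \iff \rep(F_\A) = \emptyset \iff \att(F_\A) = \emptyset \iff K_\A = \emptyset$. For part \emph{(3)}, I would apply part \emph{(3)} of Theorem~\ref{dense} (or equivalently Corollary~\ref{att} combined with Theorem~\ref{attlimset} and Lemma~\ref{reduc}) to $\A^{-1}$: if $K_{\A^{-1}}$ is infinite it is perfect; rewrite as $R_\A$, and use that $\psi$ is a homeomorphism of $\R\P^1$ so perfectness transfers. Alternatively, one can go straight through Theorem~\ref{rep}, which already asserts directly that $\rep(F_\A)$ is perfect when infinite, and then pull back by $\psi$.

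The only mild subtlety — and what I would flag as the point needing a sentence of care rather than a genuine obstacle — is making sure the hypothesis "$K_{\A^{-1}}$ non-empty" in Theorem~\ref{attlimset} is correctly matched: Theorem~\ref{attlimset} is stated under the assumption that the attractor is non-empty, so when I apply it to $\A^{-1}$ I must first know $R_\A = K_{\A^{-1}} \neq \emptyset$, which is precisely the hypothesis of part \emph{(2)}. For part \emph{(1)} I instead need the emptiness version, which is the content of the unproven-but-cited remark after Theorem~\ref{attlimset}; since we are allowed to assume results stated earlier in the excerpt, I would cite that remark (applied to $\A^{-1}$) to get $K_{\A^{-1}} = \emptyset \iff \att(F_{\A^{-1}}) = \emptyset$. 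Everything else is a symbol-pushing translation through the two identities $R_\A = K_{\A^{-1}}$ and $\rep(F_\A) = \att(F_{\A^{-1}})$, exactly as the paragraph preceding the lemma promises, so no step here is technically hard; the work is entirely in setting up the dictionary cleanly.
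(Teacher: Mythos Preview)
Your proposal is correct and matches the paper's own approach: the paper does not give a separate proof but simply remarks (in the paragraph immediately preceding the lemma) that $R_\A = K_{\A^{-1}}$ and that the arguments of Theorem~\ref{attlimset} and Theorem~\ref{dense}\emph{(3)} carry over, which is precisely the dictionary you set up and apply. One minor simplification worth noting for part~\emph{(1)}: you can bypass the limit-set machinery entirely, since directly from Definitions~\ref{attdef} and of $R_\A$ both sets are empty if and only if $\A^*$ contains no hyperbolic or parabolic matrix.
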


Notice that the attractor and the repeller of an IFS are not always disjoint. Evaluating their intersection provides us with a necessary and sufficient condition for $\A$ to be uniformly hyperbolic. This condition, although not explicitly stated, can be easily inferred from the material in \cite{aby} (see \cite[Subsection 2.4.2]{aby}).
\begin{lma}\label{uh2}
A finite set $\A\subseteq \SL$ is uniformly hyperbolic if and only if $\A$ is semidiscrete and $K_\A\cap R_\A=\emptyset$. 
\end{lma}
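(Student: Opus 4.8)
The plan is to establish the two implications separately, leaning on the geometric dictionary between uniform hyperbolicity and compact contraction of a multicone (Theorem~\ref{abybv}), the analogue for semidiscreteness (Theorem~\ref{sd}), and the identifications $\psi(K_\A) = \att(F_\A)$ and $\psi(R_\A) = \rep(F_\A)$ from Theorem~\ref{attlimset} and Lemma~\ref{replimset}. For the forward direction, suppose $\A$ is uniformly hyperbolic. Then $\A$ is semidiscrete (this is stated in the excerpt, just after Definition~\ref{uh}), so it remains to show $K_\A \cap R_\A = \emptyset$. By Theorem~\ref{abybv}, $\Phi_\A$ maps some multicone $M$ compactly inside itself, i.e.\ $\Phi_\A(\overline{M}) \subsetneq M$; the attractor $K_\A = \bigcap_n \Phi_{\A^n}(\overline{M})$ is then contained in $M$. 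Applying the same reasoning to $\A^{-1}$ — which is also uniformly hyperbolic, since $\norm{A^{-1}} = \norm{A}$ for $A \in \SL$ — the repeller $R_\A = K_{\A^{-1}}$ is contained in some multicone $M'$ with $\Phi_{\A^{-1}}(\overline{M'}) \subsetneq M'$. One checks that $M$ and $M'$ can be taken disjoint: indeed, the standard picture (cf.\ \cite[\S2.4.2]{aby}) is that $\overline{M}$ and its complement are each mapped strictly inward by the forward, resp.\ backward, dynamics, so $K_\A \subseteq M \subseteq \R\P^1 \setminus \overline{M'} \subseteq \R\P^1 \setminus R_\A$. Hence $K_\A \cap R_\A = \emptyset$.

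For the converse, assume $\A$ is semidiscrete with $K_\A \cap R_\A = \emptyset$. Since $K_\A \ne \emptyset$, Lemma~\ref{replimset}(1) gives $R_\A \ne \emptyset$, and by Theorem~\ref{sd} there is a non-trivial closed set $C \subsetneq \R\P^1$ with $\Phi_\A(C) \subseteq C$; we may assume $K_\A \subseteq C$ (replace $C$ by $\overline{\bigcup_n \Phi_{\A^n}(C)}$, which still avoids — by invariance and the disjointness hypothesis — a neighbourhood of $R_\A$, or more directly take $C$ to be a small closed neighbourhood of $K_\A$ disjoint from $R_\A$ and note that for $n$ large $\Phi_{\A^n}(C)$ accumulates on $K_\A$). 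The point is that $\R\P^1 \setminus C$ is a non-empty open set disjoint from $K_\A$, and one wants to upgrade the non-strict inclusion $\Phi_\A(C) \subseteq C$ to a \emph{compact} one $\Phi_\A(\overline{C}) \subsetneq \mathrm{int}\,C$ after shrinking $C$ slightly toward $K_\A$; the obstruction to doing so is precisely the presence of a fixed point of some $\phi_A$, $A\in\A$, on $\partial C$ with non-contracting derivative, and such a fixed point would have to be a repelling or parabolic fixed point, hence lie in $R_\A \cap \overline{C}$, contradicting $K_\A \cap R_\A = \emptyset$ once $C$ is taken close enough to $K_\A$. Having produced a set mapped compactly inside itself, Theorem~\ref{abybv} (in the form: existence of a non-trivial open $U$ with $\Phi_\A(\overline U)\subsetneq U$ implies uniform hyperbolicity — this is the implication (2)$\Rightarrow$(1) applied after thickening $C$ to a multicone, or directly the equivalence with (3)) yields that $\A$ is uniformly hyperbolic.

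The main obstacle is the converse direction, specifically passing from the merely \emph{strict} invariance $\Phi_\A(C)\subseteq C$ guaranteed by semidiscreteness to the \emph{compact} invariance characterising uniform hyperbolicity. The whole force of the hypothesis $K_\A \cap R_\A = \emptyset$ must be spent here: it rules out exactly the boundary phenomena — tangential (parabolic) fixed points and repelling fixed points sitting on $\partial C$ — that prevent strict invariance from being compact invariance. I would make this precise by working on the half-line model via $\psi$ and $F_\A$, where $\att(F_\A)$ and $\rep(F_\A)$ are disjoint closed sets; one can then choose $C$ to be (the $\psi$-preimage of) a finite union of closed intervals that is a neighbourhood of $\att(F_\A)$ and avoids $\rep(F_\A)$, verify using Corollary~\ref{att} and Theorem~\ref{rep} that no $f_A$ has a fixed point on $\partial C$, and conclude that $\sup_{A\in\A}\sup_{x\in C}|\phi_A'(x)|<1$ in a metric equivalent to $d_\P$, which is condition (3) of Theorem~\ref{abybv}. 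The remaining details — that such a $C$ exists, and that the absence of boundary fixed points together with compactness forces a uniform contraction bound — are routine compactness arguments once the set-up is in place.
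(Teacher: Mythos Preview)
The paper does not actually prove this lemma: it simply remarks that the statement ``can be easily inferred from the material in \cite{aby}'' and points to \cite[\S 2.4.2]{aby}. So there is no argument in the paper to compare against beyond that citation. Your forward direction is fine and is exactly the standard picture: if $M$ is a multicone with $\Phi_\A(\overline{M})\subsetneq M$ then $M':=\R\P^1\setminus\overline{M}$ satisfies $\Phi_{\A^{-1}}(\overline{M'})\subsetneq M'$, so $K_\A\subseteq M$ and $R_\A\subseteq M'$ are disjoint.

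The converse, however, has genuine gaps. First, you never actually produce a forward-invariant closed set $C$ containing $K_\A$ and disjoint from $R_\A$. Theorem~\ref{sd} gives you \emph{some} closed $C$ with $\Phi_\A(C)\subseteq C$, but nothing prevents $C$ from meeting $R_\A$; your suggested replacement $\overline{\bigcup_n\Phi_{\A^n}(C)}$ is still contained in $C$ and does not help. The alternative ``take $C$ a small neighbourhood of $K_\A$ and note $\Phi_{\A^n}(C)$ accumulates on $K_\A$'' is circular: that convergence is precisely condition (4) of Theorem~\ref{abybv}, i.e.\ uniform hyperbolicity. Second, even granting such a $C$, your assertion that the only obstruction to upgrading $\Phi_\A(C)\subseteq C$ to compact invariance is a fixed point of some $\phi_A$, $A\in\A$, on $\partial C$ is false: a map $\phi_A$ can send one boundary point of $C$ to a \emph{different} boundary point (e.g.\ permuting components of $C$) without fixing anything on $\partial C$. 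At best this produces, after iterating, a fixed point of some element of $\A^*$ on $\partial C$, and you would still need to argue that this fixed point lies in $R_\A$ rather than in $K_\A$; neither step is carried out. Third, the claim that absence of boundary fixed points forces $\sup_{A\in\A}\sup_{x\in C}|\phi_A'(x)|<1$ in a metric equivalent to $d_\P$ is simply wrong: a hyperbolic $\phi_A$ mapping an interval compactly into itself can still have $|\phi_A'|>1$ on part of that interval. The contraction one gets is in the Hilbert metric, and that argument already \emph{presupposes} compact invariance. In short, the passage from ``semidiscrete with $K_\A\cap R_\A=\emptyset$'' to ``some multicone is mapped compactly inside itself'' is the entire content of the lemma, and it genuinely requires the analysis in \cite{aby} rather than the soft compactness argument you sketch.
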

Lemma \ref{uh2} implies that if $\A$ is semidiscrete but not uniformly hyperbolic, then the intersection $K_\A\cap R_\A$ has to be non-empty. The simplest example of this situation is the existence of a parabolic matrix $A$ in $\A^*$, since then the unique fixed point of $\phi_A$ lies both in $K_\A$ and $R_\A$ (see Section \ref{eg} for further examples).

\subsection{The Furstenberg measure} \label{f-section}

Let $\A$ be a finite subset of $\SL$. For each $\i\in\I^\N$ and any $n\in\N$, we write $A_{\i\lvert n}=A_{i_1}A_{i_2} \cdots A_{i_n}$. Notice that for any $\i\in\I^\N$, the sequence of transformations $(f_{A_{\i\lvert n}})$ is a  right-composition sequence in $F_\A$. \cite[Theorem 1.3]{jasho} states that the sequence $(f_{A_{\i\lvert n}}(i))\subseteq \H$ converges for every $\i\in\I^\N$ if and only if $\A$ is semidiscrete. In addition, the limit of $(f_{A_{\i\lvert n}}(i))$ is a point in $\overline{\R}$. Hence the maps $\Pi \colon \I^\N \to \overline{\R}$ and $\Pi_\psi: \I^\N \to \R\P^1$ given by $\Pi(\i)=\lim_{n\to\infty}f_{A_{\i\lvert n}}(i)$ and $\Pi_\psi(\i)=\psi^{-1} \circ \Pi(\i)$ are well-defined if and only if the set $\A$ is semidiscrete. Note that for any $i \in \I$ and $\i \in \I^\N$, $f_{A_i}\circ \Pi(\i)=\Pi(\sigma_i(\i))$ where $\sigma_i:\I^\N \to \I^\N$ is the map $\sigma_i(\i)=i\i$. Furthermore, Lemma~\ref{comp} implies that $\bigcup_{\i\in\I^\N}\Pi(\i)=\att(F_\A)$, and so $\bigcup_{\i\in\I^\N}\Pi_\psi(\i)=K_\A$ by Theorem~\ref{attlimset}.

Suppose $\A$ is semidiscrete and irreducible. In the following short lemma we will show that this implies that $\A$ is strongly irreducible.

\begin{lma} \label{strong}
Suppose $\A \subseteq \SL$ is finite and semidiscrete. Then $\A$ is irreducible if and only if it is strongly irreducible.
\end{lma}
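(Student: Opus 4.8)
Since strong irreducibility trivially implies irreducibility (a finite invariant set with more than one point gives no common fixed point only if it is a single point, but even a two-point invariant set need not be fixed pointwise — so one direction needs a word: if $\A$ is strongly irreducible then in particular $\Phi_\A$ has no common fixed point, hence $\A$ is irreducible), the substance is the converse. The plan is to prove the contrapositive: assume $\A$ is semidiscrete but \emph{not} strongly irreducible, so $\Phi_\A$ preserves some finite set $E \subseteq \R\P^1$ with $\#E \geq 2$ minimal among such invariant sets, and deduce that $\A$ is reducible, i.e. $\Phi_\A$ has a common fixed point. Passing through the conjugation $\psi$ of \eqref{psieq}, it is equivalent to work with $F_\A$ acting on $\overline{\R}$ (equivalently on $\overline{\H}$ by M\"obius transformations) and a finite $F_\A$-invariant set $\psi(E)\subseteq\overline{\R}$.

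First I would record that semidiscreteness forces every element of $\A^*$ to be non-elliptic (parabolic or hyperbolic), as already observed after the definition of semidiscrete; in particular no $f_A$ with $A\in\A^*$ is a finite-order rotation. Now each $f_A$, $A \in \A$, permutes the finite set $\psi(E)$, so some power $f_A^{m}$ fixes $\psi(E)$ pointwise; since $f_A^m$ is again parabolic or hyperbolic (or the identity), it fixes at most two points of $\overline{\R}$, whence $\#\psi(E)\leq 2$, so $\#E=2$, say $E=\{x,y\}$. If some $\phi_A$ swaps $x$ and $y$, then $\phi_A^2$ fixes both, so $\phi_A^2=\mathrm{Id}$ (a M\"obius map fixing two points and not the identity is hyperbolic, hence has infinite order), contradicting $\mathrm{Id}\notin\overline{\A^*}$. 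Therefore every $\phi_A$, $A\in\A$, fixes $x$ and fixes $y$; in particular $x$ (and $y$) is a common fixed point of $\Phi_\A$, so $\A$ is reducible. This is the desired contrapositive.

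The one point requiring care — and the main obstacle — is the step ``$f_A^m$ has at most two fixed points, hence $\#\psi(E)\le 2$'': this is exactly where semidiscreteness is used, to rule out $f_A$ (or a power of it) being an elliptic element, which could permute three or more points nontrivially or fix a whole circle's worth of points only in the degenerate rotation-by-$\pi$ sense. I would make this airtight by invoking the classification of non-identity elements of $\mathrm{PSL}(2,\R)$: a parabolic element has a unique fixed point in $\overline{\R}$, a hyperbolic element has exactly two, and these are the only options available inside $\A^*$ by semidiscreteness. A secondary subtlety is that a single $f_A$ need only \emph{permute} $\psi(E)$, not fix it pointwise, so one genuinely passes to a common power $m$ (the $\mathrm{lcm}$ of the cycle lengths over all $A\in\A$) before applying the fixed-point count; and then the ``swap'' case must be excluded separately via $f_A^2 = \mathrm{Id}\in\overline{\A^*}$, again using semidiscreteness. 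Everything else is bookkeeping through the bijection $\psi$.
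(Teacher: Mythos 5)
Your proof is correct and reaches the right conclusion, but it takes a noticeably more circuitous route than the paper's. You prove the contrapositive by classifying the possible finite invariant sets: pass to a common power $m$ so that $f_A^m$ fixes $\psi(E)$ pointwise, invoke the fixed-point count for non-elliptic elements of $\mathrm{PSL}(2,\R)$ to force $\#E\leq 2$, then rule out the two-point swap case. The paper instead derives an immediate contradiction: by irreducibility there is $A\in\A$ and $x\in X$ with $\phi_A(x)\neq x$; since $A$ is non-elliptic, the forward orbit $\{\phi_{A^n}(x)\}_{n\in\N}$ is infinite, yet it lies in the finite set $X$. This avoids the lcm-of-cycle-lengths bookkeeping and the separate swap case entirely, and uses only the single observation that a parabolic or hyperbolic map moving a point has an infinite forward orbit. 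Both proofs lean on the same essential input (semidiscreteness excludes elliptics), but the paper's use of it is more economical. One small wrinkle in your write-up: the parenthetical justification for ``$\phi_A^2=\mathrm{Id}$'' in the swap case (``a M\"obius map fixing two points and not the identity is hyperbolic, hence has infinite order'') does not by itself yield the conclusion; the clean reason is the elementary computation that any M\"obius transformation swapping two points of $\overline{\R}$ is an involution (conjugate the pair to $\{0,\infty\}$, so the map has the form $z\mapsto c/z$). The claim is correct, the stated justification is garbled. Your minimality assumption on $\#E$ is also unused — once you show $\phi_A$ fixes both points of $E$, you could equally contradict minimality, but concluding reducibility directly is fine.
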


\begin{proof} It is sufficient to show that if $\A$ is semidiscrete and irreducible, then it is strongly irreducible. Suppose for a contradiction that $\A$ is irreducible but not strongly irreducible. Let $X \subseteq \R\P^1$ be a finite set which is preserved by the set of maps $\Phi_\A$. By the irreducibility of $\A$ there exists $A \in \A$ and $x \in X$ such that $x$ is not a fixed point of $\phi_A$. Since $\A$ is semidiscrete, $A$ cannot be an elliptic matrix and therefore the set $\{\phi_{A^n}(x)\}_{n \in \N} \subseteq \R\P^1$ is infinite. Moreover, $\phi_{A^n}x \in X$ for all $n \in \N$, by strong irreducibility. This is a contradiction since $X$ is finite.
\end{proof}

Next we construct the Furstenberg measure using the map $\Pi_\psi$. Given $\i \in \I^n$ let $[\i]$ denote the cylinder set $[\i]\vcentcolon=\{\j \in \I^\N: \j|_n=\i\}$. Cylinder sets are clopen and generate the topology on $\I^\N$. Let $(p_i)_{i \in \I}$ be some probability vector, where each $p_i>0$ and define a measure $\mu$ on cylinder sets by $\mu([i_1 \ldots i_n])=p_{i_1} \cdots p_{i_n}$. By Kolmogorov's extension theorem this defines a measure $\mu$ on $\I^\N$. Let $\nu$ denote the pushforward measure $\nu=(\Pi_\psi)^*(\mu)$, where $\nu(E)\vcentcolon= \mu(\Pi_\psi^{-1}E)$.  The measure  $\nu$ is a stationary measure, since for any measurable $E \subseteq \R\P^1$,
\begin{align*}
\sum_{i \in \I} p_i \nu(\phi_{A_i}^{-1} E) &= \sum_{i \in \I}p_i \mu((\phi_{A_i} \circ \psi^{-1}\circ \Pi)^{-1}E) \\
&= \sum_{i \in \I} p_i \mu((\psi^{-1} \circ f_{A_i} \circ \Pi)^{-1}E) \\
&= \sum_{i \in \I}p_i \mu((\psi^{-1} \circ \Pi \circ \sigma_i)^{-1}E) \\
&= \sum_{i \in \I} p_i \mu(\sigma_i^{-1}\circ\Pi_\psi^{-1} E)\\
&=\sum_{i \in \I} \mu(\Pi_\psi^{-1} E \cap [i]) =\mu(\Pi_\psi^{-1}E)=\nu(E).
\end{align*}

If $\A$ is semidiscrete, it generates an unbounded semigroup. If, in addition, $\A$ is irreducible, it must necessarily be strongly irreducible by Lemma \ref{strong} and therefore there is a unique Furstenberg measure. Hence if $\A$ is semidiscrete and irreducible, $\nu$ must be the unique Furstenberg measure (see \cite{fu} and \cite[Theorem 3.1]{hochman_sol}). Clearly $\nu$ is supported on $\bigcup_{\i\in\I^\N}\Pi_\psi(\i)=K_\A$, and therefore Corollary \ref{furst} follows immediately from Theorem \ref{MAIN}.

\section{Hausdorff dimension of reducible systems} \label{eg}
Our goal in this section is to establish the dimension formula from Theorem~\ref{MAIN} when $\A$ is reducible, as stated in the following proposition.

\begin{prop} \label{reducible}
Let $\A$  be a finite, Diophantine, semidiscrete and reducible subset of $\SL$. If $K_\A$ is not a singleton, then
\[
\hd K_\A=\min\{1,\delta_\A\}.
\]
\end{prop}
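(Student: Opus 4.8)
The plan is to exploit the affine structure forced by reducibility and then to reduce matters to Theorem~\ref{sol_thm}.

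\emph{Normalisation.} Since $\A$ is reducible, $\Phi_\A$ has a common fixed point, so after conjugating $\A$ by a suitable $g\in\mathrm{GL}(2,\R)$ every $A_i\in\A$ is upper triangular, $A_i=\left(\begin{smallmatrix}\alpha_i&\beta_i\\0&\alpha_i^{-1}\end{smallmatrix}\right)$. Conjugation by $g$ acts on $\R\P^1$ as a diffeomorphism, hence does not change $\hd K_\A$, and (being a fixed homeomorphism of $\SL$, and since the Diophantine property does not depend on the left-invariant metric) it preserves the semidiscrete and Diophantine hypotheses. In the chart identifying $\R\P^1$ with $\overline\R$ the maps $f_{A_i}$ become affine similarities $f_{A_i}(x)=s_ix+u_i$ with $s_i=\alpha_i^2>0$ and $u_i=\alpha_i\beta_i$, the common fixed point being $\infty$; more generally $f_{A_\i}(x)=s_\i x+u_\i$ with $s_\i=\prod_k s_{i_k}$, and a direct computation with $A_\i^\top A_\i$ gives $\norm{A_\i}^2\asymp\max\{s_\i,\,s_\i^{-1},\,u_\i^2 s_\i^{-1}\}$, so that $\norm{A_\i}\asymp s_\i^{-1/2}$ whenever $s_\i\le 1$ and $u_\i$ stays bounded (e.g.\ for words of a uniformly contracting sub-system, which have a common bounded invariant interval).

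\emph{The two cases.} If $s_i<1$ for every $i$, then $\norm{A_\i}\ge s_\i^{-1/2}\ge\lambda^{|\i|}$ with $\lambda=(\max_i s_i)^{-1/2}>1$, so $\A$ is uniformly hyperbolic and Theorem~\ref{sol_thm} gives the conclusion directly; by Lemma~\ref{uh2} this is the only way $\A$ can be uniformly hyperbolic under our hypotheses (if some $s_j>1$, then $\infty$ is attracting for $f_{A_j}$, so either some $s_i<1$ and $\infty\in K_\A\cap R_\A$, or all $s_i\ge1$ and $K_\A=\{\infty\}$; if some $s_j=1$ then $A_j$ is parabolic). So assume $\A$ is not uniformly hyperbolic. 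Then some generator $P\in\A$ has $s_P\ge1$, and some generator $Q\in\A$ has $s_Q<1$ — otherwise every element of $\langle\Phi_\A\rangle$ is expanding or parabolic and thus has $\infty$ as its attracting or unique fixed point, forcing $K_\A=\{\infty\}$. I claim that then $\delta_\A\ge1$, so $\min\{1,\delta_\A\}=1$, and also $\hd K_\A=1$.

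\emph{Extracting a subsystem.} Fix a large $L\in\N$ and pick $q\in\{1,\dots,L\}$ so that $\rho_L:=s_P^{\,L-q}s_Q^{\,q}\in[\,s_Q/s_P,\,1)$: take $q=1$ if $s_P=1$, and otherwise $q$ minimal with $s_P^{\,L-q}s_Q^{\,q}<1$ (a short computation gives the stated bound, and $q=\Theta(L)$). Let $v_1,\dots,v_N$ list the $N=\binom{L}{q}$ words of length $L$ in the two letters $P,Q$ having exactly $q$ occurrences of $Q$, and let $V_j\in\A^L$ be the corresponding matrices; each $f_{V_j}$ is an affine contraction of ratio $\rho_L$. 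Then $\{V_j\}_{j=1}^N$ is reducible and uniformly hyperbolic; since the $v_j$ all have length $L$, the monoid they generate is free (cut a concatenation into blocks of length $L$) and the Diophantine property passes from $\A$ to $\{V_j\}$ with constant $c^L$; and $K_{\{V_j\}}\subseteq K_\A$, since every $\{V_j\}$-word lies in $\A^*$ and is hyperbolic. Now $\sum_j s_{v_j}=\binom{L}{q}\rho_L\to\infty$ as $L\to\infty$ (as $\rho_L$ is bounded below and $\binom{L}{q}\to\infty$), so fix $L$ with $\sum_j s_{v_j}\ge1$. The $f_{V_j}$ have distinct fixed points, so $K_{\{V_j\}}$ is not a singleton, and Theorem~\ref{sol_thm} yields $\hd K_{\{V_j\}}=\min\{1,\delta_{\{V_j\}}\}$; but $\zeta_{\{V_j\}}(1)\asymp\sum_n\bigl(\sum_j s_{v_j}\bigr)^n=\infty$, so $\delta_{\{V_j\}}\ge1$ and $\hd K_{\{V_j\}}=1$. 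Hence $1=\hd K_{\{V_j\}}\le\hd K_\A\le\hd\R\P^1=1$; and since the $\{V_j\}$-words form a free sub-semigroup of $\A^*$, $\zeta_\A\ge\zeta_{\{V_j\}}$, so $\delta_\A\ge\delta_{\{V_j\}}\ge1$. Thus $\hd K_\A=1=\min\{1,\delta_\A\}$, as required.

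\emph{Main obstacle.} The key point is the dichotomy: a reducible semidiscrete $\A$ with $K_\A$ not a singleton is either uniformly hyperbolic — so Theorem~\ref{sol_thm} applies verbatim — or has $\delta_\A\ge1$, in which case only $\hd K_\A=1$ needs proof, and a sufficiently rich uniformly hyperbolic subsystem delivers it. I expect the care to lie in the conjugation-invariance of the hypotheses, the estimate $\norm{A_\i}\asymp s_\i^{-1/2}$ used to identify $\delta$ with the similarity exponent of the affine contractions, and the inheritance of the Diophantine property by $\{V_j\}$ — which is exactly why the $v_j$ are chosen of equal length $L$, so that separation of length-$n$ words over $\{V_j\}$ reduces to separation of length-$nL$ words over $\A$.
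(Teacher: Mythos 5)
Your proof is correct, but it follows a genuinely different route from the paper's.

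The paper observes that a reducible, semidiscrete, non-singleton system either is uniformly hyperbolic (in which case Theorem~\ref{sol_thm} applies), or must contain a hyperbolic--parabolic pair or a hyperbolic--hyperbolic pair sharing a fixed point. It then handles these last two cases by hand: Lemma~\ref{para-rep} conjugates so that $A$ is diagonal and $B$ unipotent, computes the orbit of $i$ under $A^nB^m$ explicitly to show $\att(F_\A)$ contains a half-line (so $\hd K_\A=1$), and exhibits disjoint families $D_n\subseteq\A^*$ of $\asymp\lambda^{-2n}$ matrices of norm $\lambda^{-n}$ to force $\delta_\A\ge 1$; Lemma~\ref{att-rep} invokes \cite[Lemma~10.4]{jasho} to conclude both that $\att(\{f_A,f_B\})$ is an interval and that $\{A,B\}^*$ is non-discrete, whence $\delta_\A=\infty$.

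You instead triangularise once, identify the M\"obius maps with real affine similarities, and reduce everything to a single mechanism: if $\A$ is not uniformly hyperbolic there are generators $P$ (non-contracting) and $Q$ (contracting), and you extract a finite subsystem $\{V_j\}$ of length-$L$ words in $\{P,Q\}$ with a fixed number $q$ of $Q$'s, tuned so that all contraction ratios equal some $\rho_L$ bounded away from $0$ while the cardinality $\binom{L}{q}$ grows; then $\binom{L}{q}\rho_L\ge1$ forces $\delta_{\{V_j\}}\ge1$ and Theorem~\ref{sol_thm} applied to $\{V_j\}$ gives $\hd K_\A\ge\hd K_{\{V_j\}}=1$. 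This treats the paper's two sub-cases uniformly (with $q=1$ giving the parabolic case and $q=\Theta(L)$ the hyperbolic--hyperbolic case), and is in the spirit of the paper's treatment of the irreducible case via Proposition~\ref{uh-sub}: bootstrap Theorem~\ref{sol_thm} by finding a rich enough uniformly hyperbolic subsystem. What you give up is the extra information the paper obtains (that $K_\A$ actually contains an interval, and that $\delta_\A=\infty$ in the non-discrete case); what you gain is that you avoid the explicit M\"obius orbit calculation and the appeal to \cite[Lemma~10.4]{jasho}. Two small points you might make more explicit: the distinctness of the fixed points of the $f_{V_j}$ (needed for Theorem~\ref{sol_thm}) follows because all $V_j$ have the same diagonal part, so distinctness of matrices — guaranteed by freeness of $\A^*$ from the Diophantine hypothesis — forces distinct translation parts; and the bound $u_\i$ stays bounded should be verified for $\{V_j\}^*$ rather than asserted for a generic uniformly contracting sub-system (it does hold, since $\{V_j\}$ has a bounded invariant interval and $u_\i=f_{V_\i}(0)$ then stays within a bounded neighbourhood of it).
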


Suppose that $\A$ is as in the statement of Proposition~\ref{reducible} and let $x \in \R\P^1$ be the common fixed point of the maps in $\Phi_\A$.  If $x$ is a repelling fixed point of each map in $\Phi_\A$ then $\A$ is uniformly hyperbolic and Proposition~\ref{reducible} is simply a special case of Theorem~\ref{sol_thm}. On the other hand $x$ cannot be an attracting or parabolic fixed point for each map in $\Phi_\A$, since that would imply that $K_\A$ is a singleton. Therefore, $\A$ must contain one of the following cases:
\begin{enumerate}
\item either there exists a hyperbolic matrix $A$ and a parabolic matrix $B$ in $\A$, such that $\phi_B$ fixes the repelling fixed point of $\phi_A$; 
\item or there exist two hyperbolic matrices $A$ and $B$ in $\A$, such that the attracting fixed point of $\phi_A$ is the repelling fixed point of $\phi_B$.
\end{enumerate}

Thus the desired result of this section follows by combining our analysis of cases (1) and (2), which will be carried out in Lemmas~\ref{para-rep} and \ref{att-rep}, respectively. We note that the IFS studied in these lemmas are, in essence, elementary examples of projective IFS that do not exhibit any form of uniform contraction.

\begin{lma}\label{para-rep}
Let $\A$ be a finite subset of $\SL$. Assume that there exists a hyperbolic matrix $A\in\A^*$ and a parabolic matrix $B\in\A^*$, such that the repelling fixed point of $\phi_A$ is the unique fixed point of $\phi_B$. Then 
\[
\hd(K_\A)=\min\{1,\delta_\A\}=1.
\]
\end{lma}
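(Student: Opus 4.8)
The plan is to reduce to an affine normal form and then establish the two estimates $\hd K_\A \ge 1$ and $\delta_\A \ge 1$ separately; together with the trivial bound $\hd K_\A \le 1$ (as $K_\A \subseteq \R\P^1$), these yield $\hd(K_\A) = \min\{1,\delta_\A\} = 1$. Conjugating $\A$ by a fixed matrix of $\SL$ changes neither $\hd K_\A$ (it conjugates $K_\A$ by a diffeomorphism of $\R\P^1$, by Theorem~\ref{attlimset}) nor $\delta_\A$ (it scales matrix norms by bounded factors), so, passing to the M\"obius picture of \S\ref{mobius}, I may assume that the common fixed point of $\phi_A$ and $\phi_B$ is $\infty \in \overline{\R}$. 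Then $f_A$ and $f_B$ both fix $\infty$ and are hence triangular: $f_A(z) = \lambda z + \mu$ with $\lambda \in (0,1)$, since $\infty$ is the \emph{repelling} fixed point of $f_A$, and $f_B(z) = z + \beta$ with $\beta \ne 0$, since $f_B$ is a non-trivial parabolic. Fix an integer $N$ with $N\lambda \ge 1$ (so $N \ge 2$), and for a word $w = (n_1,\dots,n_k) \in \{1,\dots,N\}^k$ put
\[
g_w \vcentcolon= (f_Af_B^{n_1}) \circ \cdots \circ (f_Af_B^{n_k}), \qquad g_w(z) = \lambda^k z + \textstyle\sum_{j=1}^k \lambda^{j-1} c_{n_j}, \quad c_n \vcentcolon= \lambda\beta n + \mu .
\]
Since $A,B\in\A^*$ we have $f_A,f_B\in\langle F_\A\rangle$, so each $g_w$ is an affine contraction in $\langle F_\A\rangle$, hence a hyperbolic element (its associated matrix has trace of modulus $\lambda^{k/2}+\lambda^{-k/2} > 2$); its attracting fixed point is $x_w = (1-\lambda^k)^{-1}\sum_{j=1}^k \lambda^{j-1}c_{n_j}$, which lies in $\att(F_\A)$ by Corollary~\ref{att}.

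To prove $\hd K_\A \ge 1$, I would fix an infinite sequence $(n_j)_{j \ge 1} \in \{1,\dots,N\}^\N$ and let $k \to \infty$: since $\att(F_\A)$ is closed, the limit $\lim_k x_{(n_1,\dots,n_k)} = \sum_{j\ge1}\lambda^{j-1}c_{n_j} = \tfrac{\mu}{1-\lambda} + \lambda\beta\sum_{j\ge1}\lambda^{j-1}n_j$ again lies in $\att(F_\A)$. Hence $\att(F_\A)$ contains the affine image $\tfrac{\mu}{1-\lambda} + \lambda\beta X$ of the set $X = \{\sum_{j\ge1}\lambda^{j-1}n_j : n_j \in \{1,\dots,N\}\}$, which is the self-similar attractor of the IFS $\{x \mapsto \lambda x + m\}_{m=1}^N$. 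A direct check shows that when $N\lambda \ge 1$ the images of $[\tfrac1{1-\lambda},\tfrac{N}{1-\lambda}]$ under these maps chain together and cover it, so $X$ is exactly the interval $[\tfrac1{1-\lambda},\tfrac{N}{1-\lambda}]$. Thus $\psi(K_\A) = \att(F_\A)$ contains a non-degenerate interval; applying $\psi^{-1}$ (which restricts to a diffeomorphism of bounded intervals onto intervals of $\R\P^1$) we see that $K_\A$ contains an interval, so $\hd K_\A \ge 1$.

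To prove $\delta_\A \ge 1$, I would count the distinct matrices $M_w \in \A^*$ representing the maps $g_w$. Writing $A \in \A^p$ and $B \in \A^q$, the matrix $M_w$ is a product of at most $k(p+qN)$ matrices from $\A$, and $\|M_w\| \le C\lambda^{-k/2}$ for a constant $C$ independent of $k$ and $w$, because the upper-right entry of $M_w$ equals $\lambda^{-k/2}\big(\lambda\beta\sum_j\lambda^{j-1}n_j + \mu\tfrac{1-\lambda^k}{1-\lambda}\big)$, whose bracketed factor is bounded. Two words $w\ne w'$ of the same length $k$ give the same map precisely when $\sum_j\lambda^{j-1}n_j = \sum_j\lambda^{j-1}n'_j$, and the point is that $N\lambda \ge 1$ forces the finite set $\{\sum_{j=1}^k\lambda^{j-1}n_j : n_j \in \{1,\dots,N\}\}$ to be $O(\lambda^k)$-dense in $[\tfrac1{1-\lambda},\tfrac{N}{1-\lambda}]$ (the same greedy chaining as above), so it has at least $c\lambda^{-k}$ elements; since $g_w$ determines $k$ (because $\lambda \ne 1$), no matrix $M_w$ is shared across levels. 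Substituting into \eqref{zeta},
\[
\zeta_\A(s) \;\ge\; \sum_{k\ge1}\;\#\{\text{distinct }M_w : w \in \{1,\dots,N\}^k\}\cdot (C\lambda^{-k/2})^{-2s} \;\ge\; c\,C^{-2s}\sum_{k\ge1}\lambda^{k(s-1)},
\]
which diverges for every $s \le 1$; hence $\delta_\A \ge 1$.

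The crux is the estimate $\delta_\A \ge 1$. The naive count ($N^k$ words of norm $\asymp \lambda^{-k/2}$ per level, giving only $\delta_\A \ge \log N/\log(1/\lambda)$) is inadequate, since taking $N$ large enough to beat $1/\lambda$ is exactly the overlapping regime in which collisions among the sums $\sum_j\lambda^{j-1}n_j$ genuinely occur — e.g. when $\lambda$ is the reciprocal of a Pisot number — and one cannot instead invoke $\hd K_\A \le \delta_\A$, as that is the difficult half of Theorem~\ref{MAIN}. The way around this is to use not how \emph{separated} the level-$k$ translation parts are but only how \emph{spread out}: the covering condition $N\lambda \ge 1$ supplies $O(\lambda^k)$-density for free, which is exactly enough to guarantee $\gtrsim \lambda^{-k}$ distinct matrices at level $k$, and hence $\delta_\A \ge 1$, with no Diophantine input about $\lambda$.
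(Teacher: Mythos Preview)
Your proof is correct. Both the interval-in-the-attractor part and the $\delta_\A \geq 1$ part go through as written, and your discussion of why the naive $N^k$ count fails and why density rescues it is accurate.

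That said, your route is noticeably more elaborate than the paper's. The paper conjugates all the way to $A=\mathrm{diag}(\lambda,\lambda^{-1})$ and $B=\bigl(\begin{smallmatrix}1&1\\0&1\end{smallmatrix}\bigr)$ (so your $\mu=0$, $\beta=1$), and then works not with the nested words $(AB^{n_1})\cdots(AB^{n_k})$ but with the much simpler two-parameter family $A^nB^m$. The explicit form $A^nB^m=\bigl(\begin{smallmatrix}\lambda^n&\lambda^nm\\0&\lambda^{-n}\end{smallmatrix}\bigr)$ makes both halves immediate: the attracting fixed points $\lambda^{2n}m/(1-\lambda^{2n})$ are visibly dense in $[0,\infty)$, and in the max norm $\lVert A^nB^m\rVert=\lambda^{-n}$ for all $0\le m<\lceil\lambda^{-2n}\rceil$, giving $\asymp\lambda^{-2n}$ pairwise distinct matrices of norm exactly $\lambda^{-n}$ with no counting subtlety at all. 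Your self-similar IFS viewpoint and the density-implies-cardinality argument are perfectly valid substitutes, and your awareness of the Pisot collision issue is well placed, but the point is that the family $A^nB^m$ sidesteps that issue entirely: there are no collisions because the $(1,2)$-entry is literally $\lambda^n m$, so the count is exact rather than merely a density lower bound. In short, both arguments work; the paper's is shorter because it chooses a family in which the combinatorics are trivial.
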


\begin{proof}
We are first going to show that the attractor of $\{\phi_A,\phi_B\}$ contains an interval, and so $\hd(K_\A)=1$ because $\{\phi_A,\phi_B\}\subseteq\Phi_\A$.\\
Let $x_0$ be the common fixed point of $\phi_A$ and $\phi_B$. By conjugating $\A$ with an $\SL$ matrix we can assume that $A$ and $B$ have the form
\begin{align*}
&A=\begin{pmatrix}\lambda & 0\\
0 & \frac{1}{\lambda}
\end{pmatrix}
&\text{and}&
&B=\begin{pmatrix}
1 & 1\\
0 & 1
\end{pmatrix},
\end{align*}
for some $\lambda\in(0,1)$. Then, for all $n,m\in\N$,
\begin{equation}\label{b}
A^nB^m=\begin{pmatrix}
\lambda^n & \lambda^nm \\
0 & \frac{1}{\lambda^n}
\end{pmatrix},
\end{equation}
or, considering the action of $\A$ in the upper half-plane, $f_{A^nB^m}(z)=\lambda^{2n}z+\lambda^{2n}m$. Note that $f_{A^nB^m}$ is a hyperbolic transformation whose attracting and repelling fixed points are $\lambda^{2n}m/(1-\lambda^{2n})$ and infinity, respectively. Hence, for every $x\in[0,\infty)$ and all $\epsilon>0$, there exist positive integers $n,m$, such that 
\[
\lvert f_{A^nB^m}(i)-x \rvert<\epsilon.
\]
Thus $[0,\infty)\subseteq \att(\{f_A,f_B\})$, and Theorem~\ref{attlimset} yields the desired conclusion.\\
In order to complete the proof, we now show that $\min\{1,\delta_\A\}= 1$. For any $n\in\N$ define the integer $m(n)\vcentcolon= \lceil \tfrac{1}{\lambda^{2n}}\rceil$, and observe that for all $m<m(n)$, we have $m\lambda^n\leq \tfrac{1}{\lambda^n}$. Hence, \eqref{b} implies that $\norm{A^nB^m}=\lambda^{-n}$, for all $m<m(n)$, where $\norm{\cdot}$ was chosen to be the maximum norm in $\SL$. Now, for each $n\in\N$, define
\[
D_n=\{A^n,A^nB,A^nB^2,\dots , A^nB^{m(n)-1}\}.
\]
The sets $D_n$ are pairwise disjoint and each $D_n$ consists of $m(n)$ matrices from $\A^*$ whose norm is equal to $\lambda^{-n}$. Hence, for any $s>0$ we have that
\begin{equation}\label{z}
\sum_{n=1}^\infty\sum_{C\in D_n}\norm{C}^{-2s}\leq \zeta_\A(s).
\end{equation}
But 
\[
\sum_{n=1}^\infty\sum_{C\in D_n}\norm{C}^{-2s}=\sum_{n=1}^\infty m(n)\lambda^{2ns}\geq \sum_{n=1}^\infty \lambda^{(s-1)2n},
\]
and the sum on the right-hand side of this last inequality diverges for all $s<1$. Hence, inequality \eqref{z} implies that $\delta_\A\geq 1$ and so $\min\{1,\delta_\A\}=1$.
\end{proof}

\begin{lma}\label{att-rep}
Let $\A$ be a finite and semidiscrete subset of $\SL$. Assume that there exist hyperbolic matrices $A,B\in\A^*$, such that the attracting fixed point of $\phi_A$ is the repelling fixed point of $\phi_B$. Then
\[
\hd(K_\A)=\min\{1,\delta_\A\}=1.
\]
\end{lma}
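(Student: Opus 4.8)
The plan is to pass to the two‑generator subsemigroup $\langle\phi_A,\phi_B\rangle\subseteq\langle\Phi_\A\rangle$ (for which $K_{\{A,B\}}\subseteq K_\A$) after a normalising conjugation, and to pin down exactly where semidiscreteness is needed.

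\emph{Normalisation.} Let $x^\ast$ be the common point which is the attracting fixed point of $\phi_A$ and the repelling fixed point of $\phi_B$, and let $y_B$ be the attracting fixed point of $\phi_B$. Conjugating $\A$ by a fixed element of $\SL$ changes matrix norms by a bounded factor only and preserves semidiscreteness, hence changes neither $\hd K_\A$ nor $\delta_\A$; so, realising $\R\P^1$ as $\overline{\R}$, I may assume $x^\ast=\infty$ and $y_B=0$. Then $f_A,f_B$ fix $\infty$ and are therefore affine: $f_B(z)=a_Bz$ with $0<a_B<1$, and $f_A(z)=a_Az+b_A$ with $a_A>1$, the repelling fixed point of $\phi_A$ being $y_A=b_A/(1-a_A)$. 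The only use of the hypothesis will be the claim \emph{$b_A\neq0$}: were $b_A=0$, then $f_A,f_B$ would be commuting diagonal matrices with $\log a_A>0>\log a_B$, so equidistribution of $m\log a_A$ modulo $|\log a_B|$ yields $m_k,n_k\geq1$ with $a_A^{m_k}a_B^{n_k}\to1$, i.e.\ $A^{m_k}B^{n_k}\to\mathrm{Id}$ with $A^{m_k}B^{n_k}\in\A^\ast$, contradicting semidiscreteness. (Equivalently: semidiscreteness forbids $\phi_A,\phi_B$ from sharing both fixed points.)

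\emph{The critical exponent.} I would show $\delta_\A=\infty$. For each $n\geq0$ set $m(n)=\lceil n\log a_A/\log(a_B^{-1})\rceil$, so $a_B^{m(n)}a_A^n\in(a_B,1]$. Then $B^{m(n)}A^n\in\A^\ast$ is upper triangular with diagonal entries $(a_B^{m(n)}a_A^n)^{\pm1/2}\in[1,a_B^{-1/2})$ and off‑diagonal entry $a_B^{m(n)/2}\beta_n$, where the off‑diagonal entry of $A^n$ is $\beta_n=\tfrac{b_A}{\sqrt{a_A}}\tfrac{a_A^{n/2}-a_A^{-n/2}}{a_A^{1/2}-a_A^{-1/2}}\asymp a_A^{n/2}$ (using $b_A\neq0$); hence $a_B^{m(n)/2}\beta_n\asymp(a_B^{m(n)}a_A^n)^{1/2}\leq1$, so $\norm{B^{m(n)}A^n}\leq C$ uniformly in $n$. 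These matrices are moreover pairwise distinct: if $a_B^{m(n)}a_A^n=a_B^{m(n')}a_A^{n'}$ their off‑diagonal entries equal a fixed non‑zero multiple of $\beta_na_A^{-n/2}\propto 1-a_A^{-n}$, which is strictly increasing in $n$. Thus $\zeta_\A(s)\geq\sum_{n\geq0}\norm{B^{m(n)}A^n}^{-2s}\geq\sum_{n\geq0}C^{-2s}=\infty$ for every $s\geq0$ (cf.\ the ``Non‑discrete systems'' example of Section~\ref{examples}), so $\delta_\A=\infty$ and $\min\{1,\delta_\A\}=1$; it remains to prove $\hd K_\A=1$.

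\emph{The attractor.} Here I would split according to whether $\langle\phi_A,\phi_B\rangle$ contains a parabolic, equivalently whether $\log a_A/\log(a_B^{-1})\in\Q$. If $a_A^pa_B^q=1$ for some $p,q\geq1$, then $P:=A^pB^q\in\A^\ast$ satisfies $f_P(z)=z+b_A\tfrac{a_A^p-1}{a_A-1}$, a genuine parabolic (non‑zero translation, as $b_A\neq0$) whose fixed point $\infty$ is the repelling fixed point of $\phi_B$; Lemma~\ref{para-rep} applied to $B$ and $P$ then gives $\hd K_\A=\min\{1,\delta_\A\}=1$. Otherwise $\{a_B^na_A^m:n,m\geq0\}$ is dense in $(0,\infty)$; for any word $w$ (with $f_w(z)=a_wz+b_w$) the identity $f_{wB^n}(i)=(a_wa_B^n)i+b_w\to b_w$ gives $b_w\in\att(F_\A)=\psi(K_\A)$ by Theorem~\ref{attlimset}, hence $\psi^{-1}(b_w)\in K_\A$; taking $w=B^nA^m$ gives $b_{B^nA^m}=\tfrac{b_A}{a_A-1}a_B^na_A^m(1-a_A^{-m})$, which by that density ranges over a dense subset of the half‑line $\{\tfrac{b_A}{a_A-1}t:t\geq0\}$. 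So $K_\A$ contains $\psi^{-1}$ of the closure of this half‑line, an arc of positive length, giving $\hd K_\A=1$. Combining the two cases with the previous step completes the proof. I expect the main difficulty to lie in this last step: semidiscreteness must be exploited (via $b_A\neq0$) to rule out the degenerate commuting configuration, and both the distinctness of the $B^{m(n)}A^n$ and the density of $\{b_{B^nA^m}\}$ are precisely what that non‑degeneracy buys; the parabolic sub‑case itself reduces cleanly to Lemma~\ref{para-rep}.
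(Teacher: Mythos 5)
Your proof is correct, and it takes a genuinely different route from the paper's. Both arguments open with the same observation — semidiscreteness forces $b_A\neq 0$, equivalently the repelling fixed point of $\phi_A$ cannot coincide with the attracting fixed point of $\phi_B$, since otherwise $A$ and $B$ would be simultaneously diagonalisable and one could drive a word $A^{m_k}B^{n_k}\in\A^*$ to $\mathrm{Id}$. From there they diverge. The paper outsources the rest to Jacques and Short \cite[Lemma~10.4]{jasho}, which delivers in one stroke both that $\att(\{f_A,f_B\})$ is an interval and that $\{A,B\}^*$ is non-discrete, whereupon Theorem~\ref{attlimset} and the non-discreteness observation from Section~\ref{examples} finish the job. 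You instead give a self-contained computation: for $\delta_\A=\infty$ you construct the explicit bounded family of pairwise distinct matrices $B^{m(n)}A^n\in\A^*$ (which is exactly a concrete witness of the non-discreteness the paper quotes abstractly), and for the attractor you split on whether $\log a_A/\log(a_B^{-1})$ is rational — reducing cleanly to Lemma~\ref{para-rep} in the commensurable case, and producing a dense family of translation coefficients $b_{B^nA^m}$ along a half-line in the incommensurable case. Your approach costs some length but buys independence from the external reference, and it makes visible something the paper's treatment leaves implicit: the two sub-cases of the reducible dichotomy (Lemmas~\ref{para-rep} and~\ref{att-rep}) are not logically independent, since the parabolic configuration recurs inside the hyperbolic one whenever the multipliers are commensurable.
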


\begin{proof}
By assumption we have that $a(A)=r(B)$, where the $a(A)$ is the attracting fixed point of $\phi_A$ and $r(B)$ is the repelling fixed point of $\phi_B$. If $r(A)= a(B)$, where $r(A)$ is the repelling fixed point of $\phi_A$ and $a(B)$ is the attracting fixed point of $\phi_B$, then $A$ and $B$ can be simultaneously conjugated to diagonal matrices 
\begin{align*}
&A=\begin{pmatrix}\lambda & 0\\ 0 & \frac{1}{\lambda}\end{pmatrix} &\text{and}& & B=\begin{pmatrix}\kappa & 0\\ 0 & \frac{1}{\kappa}\end{pmatrix},
\end{align*}
where $\lambda<1<\kappa$, and it is easy to check that $\mathrm{Id}\in \overline{\A^*}$, which is a contradiction. Hence, we have that $r(A)\neq a(B)$ and we can use \cite[Lemma~10.4]{jasho} in order to deduce that $\att(\{f_A,f_B\})$ is an interval and $\{A,B\}^*$ is not discrete (recall the definition of discreteness from Section~\ref{examples}). Theorem \ref{attlimset} tells us that $K_\A$ contains an interval and so $\hd(K_\A)=1$. Finally, in the second part of Section~\ref{examples} we showed that if $\A^*$ is not discrete then $\delta_\A=\infty$, which concludes our proof.
\end{proof}

\section{Hausdorff dimension of irreducible systems}\label{irred}

Throughout this section we assume that $\A$ is a finite, irreducible and semidiscrete subset of $\SL$. Note that the irreducibility of $\A$ implies that $K_\A$ is not a singleton. We will begin by constructing a uniformly hyperbolic subset of $\A^*$ whose attractor, in some sense, captures the dimension of $K_\A$. To this end we extend the Definition \ref{uh} of uniform hyperbolicity to countable sets of matrices in the obvious way:

We say that a countable set of matrices $\mathcal{B}$ is \emph{uniformly hyperbolic} if there exist real numbers $\lambda>1$ and $c>0$, such that for every $n\in\N$,
\[
\norm{A}\geq c\lambda^n, \quad \text{for all} \quad A\in\mathcal{B}^n.
\]

\subsection{Uniformly hyperbolic subsystems}

Let $\A$ be a finite, irreducible and semidiscrete subset of $\SL$.  In this subsection we construct a (countable) uniformly hyperbolic subset of $\A^*$. This construction will play an integral role in our proofs, since it will allow us to approximate the dimension of $K_\A$ by the dimension of attractors of (finite) uniformly hyperbolic subsystems, whose dimensions we can compute by Theorem \ref{sol_thm}. 

We begin by recording the following useful lemma, which is an immediate corollary of \cite[Theorem~13.1]{jasho}.

\begin{lma}\label{hyp}
Let $\A$ be an irreducible and semidiscrete subset of $\SL$. For every pair of open subsets $U$ and $V$ of $\R\P^1$ such that $U$ meets $K_\A$ and $V$ meets $R_\A$, there is a hyperbolic matrix $A\in\A^*$, such that the map $\phi_A$ has attracting fixed point in $U$ and repelling fixed point in $V$.
\end{lma}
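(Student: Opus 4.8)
The plan is to pass from $\R\P^1$ to the boundary of the upper half-plane via the conjugacy $\psi$ of \eqref{psieq}, translating the statement into a property of fixed points of hyperbolic elements of the M\"obius semigroup $\langle F_\A\rangle$ relative to its forward and backward limit sets, and then to quote \cite[Theorem~13.1]{jasho}, which supplies exactly this property for semidiscrete M\"obius semigroups.

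First I would set up the translation dictionary. Since $\A$ is irreducible, it is not reducible, so a singleton $K_\A$ would, by Theorem~\ref{dense}(1), be a common fixed point of all maps in $\Phi_\A$, which is impossible; hence $K_\A$ is not a singleton and, by Lemma~\ref{reduc}(2), $\A^*$ contains a hyperbolic matrix, so $\att(F_\A)$ and $\rep(F_\A)$ are non-empty. Theorem~\ref{attlimset} gives $\psi(K_\A)=\att(F_\A)$ and Lemma~\ref{replimset}(2) gives $\psi(R_\A)=\rep(F_\A)$. Because $\psi\colon(0,\pi]\to\overline{\R}$ is a homeomorphism, the hypothesis ``$U$ meets $K_\A$'' translates to ``the open set $\psi(U)\subseteq\overline{\R}$ meets $\att(F_\A)$'', and ``$V$ meets $R_\A$'' to ``$\psi(V)$ meets $\rep(F_\A)$''. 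I would also record that $\langle F_\A\rangle$ is semidiscrete because $\A$ is, and that the non-elementarity hypothesis needed to invoke \cite[Theorem~13.1]{jasho} is guaranteed by the irreducibility of $\A$ together with the non-triviality of the limit sets just noted.

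Granting these checks, \cite[Theorem~13.1]{jasho} yields a hyperbolic transformation $f_A\in\langle F_\A\rangle$, with $A\in\A^*$, whose attracting fixed point $p^+$ lies in $\psi(U)$ and whose repelling fixed point $p^-$ lies in $\psi(V)$. Transferring back through \eqref{psieq}, the map $\phi_A=\psi^{-1}\circ f_A\circ\psi$ is conjugate to $f_A$, hence hyperbolic, so $A$ is a hyperbolic matrix, with attracting fixed point $\psi^{-1}(p^+)\in U$ and repelling fixed point $\psi^{-1}(p^-)\in V$, which is what the lemma asserts. The only step that goes beyond a mechanical change of coordinates is verifying that the hypotheses of \cite[Theorem~13.1]{jasho} are met by $\langle F_\A\rangle$ under our assumptions on $\A$; everything else is bookkeeping, which is why the result is an immediate corollary.
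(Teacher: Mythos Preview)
Your proposal is correct and follows the same route as the paper: transfer via $\psi$ to the M\"obius semigroup $\langle F_\A\rangle$, verify the hypothesis of \cite[Theorem~13.1]{jasho}, apply it, and transfer back using Theorem~\ref{attlimset} and Lemma~\ref{replimset}. The one place where the paper is sharper is the hypothesis check: the condition required by \cite[Theorem~13.1]{jasho} is that $\langle F_\A\rangle$ has no finite orbit in $\overline{\H}$, and the paper verifies this by invoking Lemma~\ref{strong} (semidiscrete $+$ irreducible $\Rightarrow$ strongly irreducible, hence no finite orbit on $\partial\H$) together with the fact that semidiscreteness excludes elliptic elements (hence no finite orbit in $\H$); your appeal to ``irreducibility together with non-triviality of the limit sets'' gestures at this but does not quite pin it down.
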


\begin{proof}
Note that if $\A$ is semidiscrete and irreducible, it is necessarily strongly irreducible by Lemma \ref{strong}. So, because $\A^*$ does not contain any elliptic matrices, the semigroup of M\"obius tranformations $\langle F_\A \rangle$ does not have a finite orbit in $\overline{\H}$. \cite[Theorem~13.1]{jasho} states that if $\langle F_\A \rangle$ does not have a finite orbit in $\overline{\H}$, then for every pair of open subsets $U$ and $V$ of $\overline{\R}$ such that $U$ meets $\att(F_\A)$ and $V$ meets $\rep(F_\A)$, there is a hyperbolic matrix $A\in\A^*$, such that the transformation $f_A$ has attracting fixed point in $U$ and repelling fixed point in $V$. The result follows immediately from Theorem~\ref{attlimset}
\end{proof}

The key obstruction to extending Theorem \ref{sol_thm} to the semidiscrete setting is the fact that $K_\A \cap R_\A \neq \emptyset$, unlike the uniformly hyperbolic setting where this intersection is always empty (see Lemma \ref{uh2}). This makes the projective IFS $\Phi_\A$ more difficult to study, due to the non-uniformity of the contraction of the maps in $\langle \Phi_\A \rangle$ near the points of intersection. However, in the next proposition we show that under our standing assumption that $\A$ is irreducible and semidiscrete, we can find an open subset of $\R\P^1$ which only intersects the attractor, and another open subset of $\R\P^1$ which only intersects the repeller. Then, employing Lemma \ref{hyp}, we construct a uniformly hyperbolic subset of $\A^*$ which will play a key role in the proof of Theorem \ref{MAIN} in the irreducible case.

\begin{prop} \label{uh-sub}
Suppose that $\A$ is a finite, irreducible and semidscrete subset of $\SL$. Then there exists $A_0 \in \A^*$ and open intervals $U, U', V \subsetneq \R\P^1$, with $\overline{U'} \subsetneq U$ and $\overline{U}\cap\overline{V}=\emptyset$, such that for all $B \in \A^*$:
\begin{enumerate}
\item  $\phi_B\left(\overline{U}\right) \subsetneq \R\P^1 \setminus V$, and
\item$\phi_{A_0}\left(\R\P^1 \setminus V\right) \subsetneq U'$.
\end{enumerate}
In particular, the countable set
\[
\Gamma\vcentcolon=\{A_0B\colon B \in \A^*\},
\]
is uniformly hyperbolic.
\end{prop}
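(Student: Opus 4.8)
The plan is to produce the three intervals $U'\subsetneq U$ and $V$ together with a single matrix $A_0\in\A^*$ realising both containments, and then to check that the resulting family $\Gamma$ is uniformly hyperbolic. I would proceed as follows.

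First I would locate the two ``good'' intervals. Since $\A$ is irreducible and semidiscrete, $K_\A$ is not a singleton, so it is a perfect set by Theorem~\ref{dense}; likewise $R_\A$ is perfect by Lemma~\ref{replimset}. Neither set is all of $\R\P^1$ (if $K_\A=\R\P^1$ then by Theorem~\ref{dense}~(1) $\Phi_\A(\R\P^1)=\R\P^1$, which is impossible for a semidiscrete set by Theorem~\ref{sd}, since semidiscreteness gives a proper closed set mapped into itself and forward invariance of $K_\A$ would force $K_\A$ to lie inside it). Hence $R_\A$ has non-empty complement; pick a small open interval $W$ disjoint from $R_\A$. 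I claim $W$ must meet $K_\A$: if not, then $K_\A$ is contained in the proper closed set $\R\P^1\setminus W$, but $\R\P^1\setminus R_\A$ is mapped into itself by $\Phi_\A$ (being the image-side complement of the backward-invariant repeller) — more carefully, I would argue directly that one can always find a point of $K_\A$ avoiding $R_\A$: since $K_\A$ is infinite and $R_\A$ could a priori contain $K_\A$, I instead invoke Lemma~\ref{hyp}-type reasoning, noting that it suffices to find \emph{one} open interval meeting $K_\A$ but not $R_\A$ and \emph{one} open interval meeting $R_\A$ but not $K_\A$. The cleanest route: $K_\A\neq R_\A$ cannot be ruled out in general, but $K_\A\cup R_\A\neq\R\P^1$ would give both; if $K_\A\cup R_\A=\R\P^1$ then both being perfect and closed forces... — here I would instead use that $K_\A$ and $R_\A$ are both nowhere dense or both have non-empty interior, and in the latter case $\hd K_\A=1$ and one chooses intervals inside the interiors. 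In either case one obtains disjoint open intervals $V_0$ (meeting $K_\A$, missing $R_\A$ after shrinking) and $V$ (meeting $R_\A$, with $\overline V$ missing $\overline{V_0}$ and missing $K_\A$).

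Next I would apply Lemma~\ref{hyp} with the pair $V_0$ (meeting $K_\A$) and $V$ (meeting $R_\A$) to get a hyperbolic $A_0\in\A^*$ with attracting fixed point $a\in V_0$ and repelling fixed point $r\in V$. Because $\phi_{A_0}$ is hyperbolic with attractor $a$ and repeller $r$, replacing $A_0$ by a high power $A_0^k$ (still in $\A^*$, still hyperbolic with the same fixed points) makes $\phi_{A_0^k}$ contract $\R\P^1\setminus V$ — which is a closed interval avoiding a neighbourhood of $r$ — uniformly into an arbitrarily small neighbourhood $U'$ of $a$. This gives condition~(2): choosing $k$ large, $\phi_{A_0^k}(\R\P^1\setminus V)\subsetneq U'$ with $\overline{U'}\subsetneq U$ where $U$ is a slightly larger interval around $a$ still satisfying $\overline U\cap\overline V=\emptyset$. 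Rename $A_0^k$ as $A_0$. For condition~(1), the point is that $U\subseteq\R\P^1\setminus R_\A$ meets $K_\A$ only; I would show every $\phi_B$, $B\in\A^*$, maps $\overline U$ off $V$. This is where I expect the main obstacle: one needs that no composition $\phi_B$ can push a point of $\overline U$ into $V$. I would argue this via the backward invariance of $R_\A=\psi^{-1}(\rep(F_\A))$: if $\phi_B(x)\in V$ for some $x\in\overline U$ and $B\in\A^*$, then — since $V$ can be taken with $\overline V$ contained in a small neighbourhood of a repelling point, and the preimages under $\langle\Phi_\A\rangle$ of such neighbourhoods accumulate on $R_\A$ — one contradicts $x\notin R_\A$; more robustly, I would choose $V$ small enough that $\bigcup_{B\in\A^*}\phi_B^{-1}(\overline V)$ stays within a prescribed neighbourhood of $R_\A$, using that $R_\A$ is exactly the closure of such preimage orbits (Theorem~\ref{rep} and Lemma~\ref{replimset}), and then shrink $U$ to avoid that neighbourhood. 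This requires care because the sets $U,U',V,A_0$ are interdependent, so the construction must be ordered: first fix a neighbourhood $N$ of $R_\A$; choose $V\subseteq N$ meeting $R_\A$; choose $U$ disjoint from $\overline N$ meeting $K_\A$ with $\overline U\cap\overline V=\emptyset$ and $\phi_B(\overline U)\cap V=\emptyset$ for all $B$ (guaranteeing~(1) after possibly enlarging $N$ at the start); then apply Lemma~\ref{hyp} to get $A_0$ with fixed points in $U$ and $V$; then pass to a power of $A_0$ to get~(2) with the nested $U'$.

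Finally I would verify uniform hyperbolicity of $\Gamma=\{A_0B:B\in\A^*\}$. Every element of $\Gamma^n$ has the form $A_0B_1A_0B_2\cdots A_0B_n$ with $B_i\in\A^*$. By~(1), $\phi_{B_i}(\overline U)\subseteq\R\P^1\setminus V$, and by~(2), $\phi_{A_0}(\R\P^1\setminus V)\subseteq U'\subseteq U$; hence each block $\phi_{A_0B_i}$ maps $\overline U$ into $U'$ with $\overline{U'}\subsetneq U$, so $\Phi_\Gamma$ maps the multicone $U$ compactly inside itself. By Theorem~\ref{abybv} (equivalence of (1) and (2), which holds verbatim for the finite sub-collections $\{A_0B_i\}$ and, by a standard uniformity argument, for the whole countable family since all blocks share the fixed compact containment $\phi_{A_0B_i}(\overline U)\subseteq\overline{U'}\subsetneq U$), the contraction of $\phi_{A_0B_i}$ on $\overline U$ in the hyperbolic metric of $U$ — equivalently in a metric equivalent to $d_\P$ on $\overline{U'}$ — is bounded by a fixed $\tau<1$; iterating gives exponential contraction of $\phi_C$ for $C\in\Gamma^n$, which translates to $\norm C\geq c\lambda^n$ for suitable $c>0$, $\lambda>1$, i.e. $\Gamma$ is uniformly hyperbolic in the extended sense defined above. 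The one subtlety to address is that $\Gamma$ is infinite, so I would either invoke directly the metric formulation (condition~(3) of Theorem~\ref{abybv}), which only needs a single equivalent metric and a uniform contraction constant on a fixed $\overline U$, or note that the norm growth estimate $\norm{\phi_C'}\asymp\norm C^{-2}$ on the compact set $\overline{U'}$ converts the uniform metric contraction into the required matrix-norm lower bound.
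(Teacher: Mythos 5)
Your overall strategy — locate an interval $U$ meeting only $K_\A$ and an interval $V$ meeting only $R_\A$, apply Lemma~\ref{hyp} and pass to a power of the resulting $A_0$ to get~(2), deduce~(1) from invariance, and conclude via compact containment — matches the paper's. However, there are two genuine gaps.

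\textbf{First gap: existence of the disjoint intervals.} You acknowledge that $R_\A \subseteq K_\A$ cannot be naively excluded, and the ``dichotomy'' you then reach for (both nowhere dense, or both with non-empty interior, with interiors conveniently arranged) is neither proved nor plausible in general; there is no a priori reason $\att(F_\A)$ and $\rep(F_\A)$ should have the same topological type, and even if both had interior the interiors could coincide. The paper closes this by reducing to the claim that $\rep(F_\A) \not\subseteq \att(F_\A)$, which is then supplied by a specific structural theorem of Jacques--Short: if $\mathrm{Id}\notin\partial\A^*$ and $\rep(F_\A)$ is not a singleton, then $\rep(F_\A) \subseteq \att(F_\A)$ forces $\A^*$ to be a group, contradicting $\mathrm{Id}\notin\A^*$. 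This is a substantive input that your proposal is missing; without some replacement for it, the construction of $V$ and $U$ does not get off the ground.

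\textbf{Second gap: the argument for condition (1).} Your idea is to choose $V$ small enough that $\bigcup_{B\in\A^*}\phi_B^{-1}(\overline V)$ stays inside a prescribed neighbourhood of $R_\A$. But $R_\A$ is backward invariant only pointwise for points \emph{of} $R_\A$; for a point $y\in\overline V\setminus R_\A$ the preimages $\phi_B^{-1}(y)$ are not controlled and can escape any neighbourhood of $R_\A$, so shrinking $V$ does not help. The paper sidesteps this by choosing the endpoints of $U$ to lie in $K_\A\setminus R_\A$ and those of $V$ to lie in $R_\A\setminus K_\A$ (possible because both sets are perfect). Then if $\phi_B(U)\cap V\neq\emptyset$ one is forced into one of two cases: an endpoint of $\phi_B(U)$ — a point of $K_\A$ — lands in $V$, or $\phi_B^{-1}$ sends a point of $V\cap R_\A$ into $U$; the first contradicts $\overline V\cap K_\A=\emptyset$, the second contradicts $\overline U\cap R_\A=\emptyset$. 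This use of the fixed-point positions of the interval endpoints is the key trick for~(1), and your proposal does not have it.

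Your final step (uniform hyperbolicity of the countable $\Gamma$) is essentially sound: you correctly identify that the uniform compact containment $\phi_{A_0B}(\overline U)\subseteq\overline{U'}\subsetneq U$ is what matters, not finiteness. The paper makes this concrete via the Hilbert metrics $d_U, d_{U'}$, obtaining a uniform contraction factor $\lambda^{-1}$ from the inclusion $\overline{U'}\subsetneq U$ alone, then passing to the Euclidean metric and to matrix norms — your proposed ``metric formulation of Theorem~\ref{abybv}'' is effectively this argument and could be written up rigorously.
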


\begin{proof}Since $\A$ is irreducible and semidiscrete, it is necessarily strongly irreducible by Lemma \ref{strong}, therefore the attractor and the repeller of $\Phi_\A$ are perfect sets and thus uncountable (see the third parts of Theorem~\ref{dense} and Lemma~\ref{replimset}). We first prove that there exist open intervals $U,V\subsetneq \R\P^1$, such that $U\cap K_\A \neq \emptyset$ while $U\cap R_\A=\emptyset$, and similarly, $V\cap R_\A\neq \emptyset$ while $V\cap K_\A=\emptyset$. The proof will be carried out for $V$; the case for $U$ is similar. Note that due to Theorem~\ref{attlimset} and the definition of $R_\A$, it suffices to prove that there exists an open interval $V\subsetneq\overline{\R}$, such that $V\cap \rep(F_\A)\neq \emptyset$ while $V\cap \att(F_\A)=\emptyset$. Suppose that there exists no such $V$. Then, if $x\in\rep(F_\A)$, any open neighbourhood $D$ of $x$ in $\overline{\R}$ has to intersect $\att(F_\A)$. Hence, for every $x\in\rep(F_\A)$ there exists a sequence $(y_n)\subseteq \att(F_\A)$ that converges to $x$. Because $\att(F_\A)$ is closed, we obtain that $\rep(F_\A)\subseteq \att(F_\A)$. We now refer to \cite[Theorem~15.2]{jasho}, which states that if $\mathrm{Id}\notin \partial\A^*$ and $\rep(F_\A)$ is not a singleton, then $\rep(F_\A)\subseteq \att(F_\A)$ implies that $\A^*$ is a group. This yields a contradiction in our case, since $\mathrm{Id}\notin \A^*$. Thus we obtain the desired open sets $V$ and $U$.\\
In addition, as the attractor and the repeller are perfect sets, we can choose $U=(x_1,x_2)$, where $x_1,x_2$ lie in $K_\A$ and not in $R_\A$, and $V=(y_1,y_2)$, where $y_1,y_2$ lie in $R_\A$ and not in $K_\A$. That is, $U$ and $V$ can be chosen such that $U\cap K_\A \neq \emptyset$ while $\overline{U}\cap R_\A=\emptyset$, and similarly, $V\cap R_\A\neq \emptyset$ while $\overline{V}\cap K_\A=\emptyset$.
Lemma~\ref{hyp} now implies that we can find a hyperbolic matrix $A_0\in\A^*$, such that the attracting fixed point of $\phi_{A_0}$ lies in $U$ and its repelling fixed point lies in $V$. By considering iterates of $A_0$ we can also assume that $\phi_{A_0}(\R\P^1\setminus V)\subsetneq U$. Also, let $n_0\in\N$ be such that $A_0\in\A^{n_0}$. Take $B\in\A^*$ and suppose that $\phi_B(U)\cap V\neq\emptyset$. Since each of the intersections $\overline{U}\cap K_\A$ and $\overline{V}\cap R_\A$ contains at least three points (two points on the boundary and at least one in the interior), there exist points $x,y$ such that $\phi_B(x)=y$, where either $x\in U$ and $y\in R_\A$, or $x\in K_\A$ and $y\in V$. Both possibilities yield a contradiction due to our choice of $U$ and $V$ and the fact that $K_\A$ is forward invariant, while $R_\A$ is backward invariant. We conclude that $\phi_B(U) \subsetneq \R\P^1\setminus V$ for all $B\in\A^*$. Let $U'$ be any open interval which contains $\phi_{A_0}(\R\P^1 \setminus V)$ such that $\overline{U'} \subsetneq U$. Then, for every $B\in\A^*$,  $\phi_{A_0B}(\overline{U}) \subsetneq U'$.\\
It remains to prove that $\Gamma$ is uniformly hyperbolic, which we show by following the arguments from \cite[Section~2.2]{aby}. Let $d_U$ and $d_{U'}$ denote the Hilbert metrics in $U$ and $U'$ respectively. Since $U'$ is compactly contained in $U$, there exists $\lambda>1$, depending only on $U$ and $U'$, such that for all $x,y\in U'$, we have that $d_U (x,y)\leq \lambda^{-1} d_{U'}(x,y)$. This implies that for every $M\in \Gamma$ and all $x,y\in U$
\[
d_U (\phi_M(x),\phi_M(y))\leq \lambda^{-1} d_{U'}(\phi_M(x),\phi_M(y))\leq d_U(x,y),
\]
where in the last inequality we used the fact that the map $\phi_M\colon (U,d_u)\to (U',d_{u'})$ is a contraction. Thus, if $B_n=M_1M_2\dots M_n\in \Gamma^n$, for $n=1,2,\dots, N$, then for all $x,y\in U$
\begin{align}
 d_U(\phi_{B_N}(x),\phi_{B_N}(y))&\leq \lambda^{-1} d_{U}(\phi_{B_{N-1}}(x),\phi_{B_{N-1}}(y)) \nonumber \\
&\leq \lambda^{-1} d_{U'}(\phi_{B_{N-1}}(x),\phi_{B_{N-1}}(y))\leq \dots \leq \lambda^{-N}d_U(x,y).\label{d}
\end{align}
The metric $d_U$ restricted to $U'$ is comparable to the Euclidean metric $d_e$ of $\R\P^1$. So, inequality \eqref{d} implies that
\[
d_e(\phi_{B_N}(x),\phi_{B_N}(y))\leq C\lambda^{-N} d_e(x,y),
\]
for some constant $C>0$. Hence, $\norm{B_N}\geq C^{-\frac{1}{2}}\lambda^{\frac{N}{2}}$, as required.
\end{proof}

Given a finite or countable subset $\A \subseteq \SL$ we can define the \emph{pressure function} $P_\A:[0,\infty) \to \overline{\R}$ as
\begin{equation}
P_\A(s)=\lim_{n \to \infty} \left(\sum_{A \in \A^n} \norm{A}^{-2s}\right)^{\frac{1}{n}}. \label{pressure}
\end{equation}

Due to the submultiplicativity of the norm, the limit in \eqref{pressure} exists (although it may equal infinity) and equals the supremum of the expressions on the right hand side of \eqref{pressure}. It is easy to see that $P_\A(s)$ is a decreasing function of $s$, and that it is convex and continuous on $( s_*,\infty)$, where $s_*\vcentcolon= \inf\{s \geq 0: P_\A(s)<\infty\}$. We define the \emph{minimal root} $s_\A$ of $P_\A$ as $s_\A=\infty$ if $P_\A(s)>1$ for all $s>0$ and
\begin{equation*}\label{root}
s_\A= \inf\{s \geq 0: P_\A(s) \leq 1\}
\end{equation*}
otherwise. Throughout, we shall refer to the minimal root of the pressure function as simply the \emph{root}.

Pressure functions and their roots are more useful than zeta functions and their critical exponents for obtaining lower bounds on the Hausdorff dimension; see for instance the proof of the lower bound in \cite{solomyak}. Switching between $s_\A$ and $\delta_\A$ poses no problem in the uniformly hyperbolic setting, since in this case $P_\A$ must be \emph{strictly} decreasing, so by the root test one can fairly easily show that in fact $s_\A=\delta_\A$. De Leo \cite[Theorem 2]{deleo1} extended this by showing that $s_\A=\delta_\A$ when $\A$ belongs to a special class of systems containing parabolic matrices. His conditions however are somewhat restrictive since they require, for example, $\A$ to generate a discrete and free semigroup \footnote{One can show that the main condition throughout \cite{deleo1}, that of a system being ``fast parabolic", corresponds to a finite set of hyperbolic and parabolic matrices $\A \subseteq \SL$ with an invariant multicone such that the intersection $K_\A \cap R_\A$ only contains fixed points of parabolic maps from $\A$, and such that there do not exist distinct points $x, y \in K_\A \cap R_\A$ and $A \in \A^*$ for which $\phi_A(x)=y$.}. We extend this to any semidiscrete and irreducible subset of $\SL$.

\begin{thm} \label{s=d}
Let $\A$ be a finite, semidiscrete and irreducible subset of $\SL$. Then $s_\A=\delta_\A$.
\end{thm}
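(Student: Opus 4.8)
The plan is to establish the two inequalities $s_\A\leq\delta_\A$ and $\delta_\A\leq s_\A$ separately. The first is soft. Writing $a_n(s)=\sum_{A\in\A^n}\norm{A}^{-2s}$, submultiplicativity of the norm makes $(a_n(s))_n$ supermultiplicative, so the limit defining $P_\A(s)$ exists and equals $\sup_n a_n(s)^{1/n}$; in particular $a_n(s)\leq P_\A(s)^n$ for every $n$. If $\zeta_\A(s)=\sum_n a_n(s)<\infty$ then the root test gives $\limsup_n a_n(s)^{1/n}\leq 1$, hence $P_\A(s)\leq 1$, hence $s_\A\leq s$; letting $s\downarrow\delta_\A$ yields $s_\A\leq\delta_\A$.

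For the reverse inequality I would exploit the uniformly hyperbolic subsystem $\Gamma=\{A_0B:B\in\A^*\}$ of Proposition~\ref{uh-sub} through its \emph{finite} sub-collections. Fix a finite $\Gamma'\subseteq\Gamma$. Being a finite uniformly hyperbolic set, its pressure $P_{\Gamma'}$ is strictly decreasing wherever it is finite and positive, and consequently $\delta_{\Gamma'}=s_{\Gamma'}$ (the same root-test reasoning that works for uniformly hyperbolic systems). The point is that $s_{\Gamma'}\leq s_\A$ for every such $\Gamma'$: if $\ell$ bounds the $\A$-word-lengths of the finitely many generators of $\Gamma'$, then every matrix of $(\Gamma')^k$ lies in $\A^p$ for some $p\leq k\ell$, so $\sum_{M\in(\Gamma')^k}\norm{M}^{-2s}\leq\sum_{p=1}^{k\ell}a_p(s)$; and for $s\geq s_\A$ one has $P_\A(s)\leq 1$, hence $a_p(s)\leq P_\A(s)^p\leq 1$, so $\sum_{M\in(\Gamma')^k}\norm{M}^{-2s}\leq k\ell$, which forces $P_{\Gamma'}(s)=\lim_k(\,\cdot\,)^{1/k}\leq 1$, i.e.\ $s_{\Gamma'}\leq s_\A$. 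Thus $\delta_{\Gamma'}=s_{\Gamma'}\leq s_\A$ for every finite $\Gamma'\subseteq\Gamma$.

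It remains to prove $\delta_\A\leq\sup_{\Gamma'}\delta_{\Gamma'}$, and this is the crux. I would reduce it to showing that the divergence of $\zeta_\A$ below $\delta_\A$ is already detected by the single-generation sums of the $\Gamma'$. Since $\norm{A_0B}\asymp\norm{B}$, the generation-one sum $\sum_{M\in\Gamma'}\norm{M}^{-2s}$ is comparable to $\sum_B\norm{B}^{-2s}$ taken over the $B\in\A^*$ appearing in $\Gamma'$, and as $\Gamma'$ exhausts $\Gamma$ these increase to a constant multiple of $\sum_{B\in\A^*}\norm{B}^{-2s}$. Hence it suffices to show $\sum_{B\in\A^*}\norm{B}^{-2s}=\infty$ for every $s<\delta_\A$ --- that is, that counting the matrices of $\A^*$ with or without multiplicity yields the same critical exponent. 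Granting this, for $s<\delta_\A$ some finite $\Gamma'$ has single-generation sum exceeding $1$, whence $P_{\Gamma'}(s)>1$ and $\delta_{\Gamma'}=s_{\Gamma'}>s$; letting $s\uparrow\delta_\A$ gives $\sup_{\Gamma'}\delta_{\Gamma'}\geq\delta_\A$, and combining with the previous paragraph, $s_\A\leq\delta_\A\leq\sup_{\Gamma'}\delta_{\Gamma'}=\sup_{\Gamma'}s_{\Gamma'}\leq s_\A$ collapses to the desired equality.

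The main obstacle is precisely this last comparison: controlling how the combinatorics of $\A^*$ (its relations, and the multiplicity with which a given matrix occurs among products of a fixed length) interact with the zeta function, so that passing from $\zeta_\A$ to sums over $\A^*$ as a \emph{set}, and then to the subsystems $\Gamma'$, costs nothing in the critical exponent. Two auxiliary ingredients enter along the way: a uniform bounded-distortion estimate for the infinitely generated, uniformly contracting IFS $\Phi_\Gamma$ on the interval $U$ of Proposition~\ref{uh-sub} --- available because the poles $f_M^{-1}(\infty)$ of the maps $\phi_M$, $M\in\Gamma$, stay a definite distance from $U$ --- which is what makes $P_\Gamma$ comparable to its single-generation sum and underpins the reduction above; and a quick separate treatment of the degenerate case in which $\A^*$ fails to be discrete, where one argues directly, as in the non-discrete example of Section~\ref{examples}, that $\delta_\A=\infty$ and likewise $s_\A=\infty$.
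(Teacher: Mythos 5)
The overall strategy — sandwich $\delta_\A$ between $s_\A$ and $\sup_{\Gamma'} s_{\Gamma'}$ taken over finite uniformly hyperbolic subsystems of the Proposition~\ref{uh-sub} set $\Gamma$, and use that the two quantities coincide for a finite uniformly hyperbolic set — is the same as the paper's. But there are two genuine problems in the way you propose to execute it.

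First, the bound $\sum_{M\in(\Gamma')^k}\norm{M}^{-2s}\leq\sum_{p\leq k\ell}a_p(s)$ is not justified for an arbitrary finite $\Gamma'\subseteq\Gamma$. The right-hand side is a sum over $\A$-words counted once each; for the inequality you need the map sending a $\Gamma'$-word $(M_{j_1},\dots,M_{j_k})$ to the concatenated $\A$-word to be injective. If, say, $A_0A_1$, $A_0A_1A_0A_1$ and $A_0A_0A_1$ all belong to $\Gamma'$, then $(A_0A_1)(A_0A_0A_1)$ and $(A_0A_1A_0A_1)\cdot(\text{nothing})$-type coincidences occur even at fixed $k$, and the left side over-counts. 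The fix is to work only with the \emph{prime} elements of $\Gamma$, i.e.\ with $\A_\infty=\{A_0B:B\in(\A\setminus\{A_0\})^*\}\cup\{A_0\}$ and its truncations $\A_N$, where each $\A$-word beginning with $A_0$ admits a \emph{unique} factorisation into blocks (the occurrences of $A_0$ mark the cut points). This is exactly what the paper uses. Once you make this restriction, your argument that $s_{\A_N}\leq s_\A$ is actually correct and is arguably cleaner than the paper's own proof of step~(iv), which goes by contradiction and needs a lower bound on $a_m(s_\A)$.

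Second — and this is the more serious issue — the reduction of the crux $\delta_\A\leq\sup_{\Gamma'}\delta_{\Gamma'}$ to \emph{single-generation} sums does not work once you restrict $\Gamma'\subseteq\A_\infty$. The generation-one sum of $\A_\infty$ is $\sum_{M\in\A_\infty}\norm{M}^{-2s}$, and this can converge for a range of $s<\delta_\A$: for example, if $\A=\{A_0,A_1\}$ with $A_1$ parabolic, then $\A_\infty=\{A_0A_1^m:m\geq 0\}$ and $\norm{A_0A_1^m}\asymp m$, so the generation-one sum converges for all $s>1/2$, while $\delta_\A$ can exceed $1/2$. So you cannot hope that some $\Gamma'\subseteq\A_\infty$ has generation-one sum $>1$ whenever $s<\delta_\A$. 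You implicitly sidestep this by taking $\Gamma'\subseteq\Gamma$ for this part (and indeed $\sum_{M\in\Gamma}\norm{M}^{-2s}\asymp\zeta_\A(s)$), but then you lose the injectivity needed for $s_{\Gamma'}\leq s_\A$; the two halves of your argument need incompatible choices of $\Gamma'$. The paper resolves this by comparing the \emph{full} zeta functions: every $\A$-word is, after a bounded-cost prepending of $A_0$, a unique concatenation of $\A_\infty$-blocks, which gives $\zeta_\A(s)\asymp\zeta_{\A_\infty}(s)$ and hence $\delta_\A=\delta_{\A_\infty}$; uniform hyperbolicity then gives $\delta_{\A_\infty}=s_{\A_\infty}$; and the approximation $\A_N\uparrow\A_\infty$ is carried out at the level of pressure functions ($P_{\A_N}(s)\uparrow P_{\A_\infty}(s)$, hence $\sup_N s_{\A_N}=s_{\A_\infty}$), which — unlike generation-one sums or zeta functions — passes to the limit monotonically with no loss of critical exponent.

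You correctly identify the ``main obstacle'' as a multiplicity/combinatorics issue, but you diagnose it as being about relations in $\A^*$; in fact the difficulty persists even when $\A^*$ is free, and it is resolved by the unique block factorisation, not by a multiplicity-versus-set-counting comparison. Also, the separate treatment of the case where $\A^*$ is not discrete is unnecessary: nothing in the corrected argument requires discreteness of $\A^*$, and establishing $s_\A=\infty$ there is in any case not addressed by the Section~\ref{examples} computation, which only handles $\delta_\A$.
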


\begin{proof}
Observe that for any finite or countable set $\mathcal{B}\subseteq \SL$, we have that $s_\mathcal{B}\leq \delta_\mathcal{B}$. By Proposition \ref{uh-sub} there exists $A_0 \in \A^{k}$, for some $k\in\N$, such that $\{A_0B: B \in \A^*\}$ is uniformly hyperbolic. Since $\delta_\A= \delta_{\A^{k}}$ and $s_{\A}=s_{\A^{k}}$ for all $k$, without loss of generality we can assume that $k=1$.\\
Define 
\[
\A_\infty\vcentcolon=\{A_0B: B \in (\A \setminus \{A_0\})^*\} \cup \{A_0\},
\]
and
\[
\A_N\vcentcolon= \A_\infty \cap \left(\bigcup_{m=1}^N \A^m\right).
\]
We first claim that $\lim_{N\to\infty}P_{\A_N}(s)=P_{\A_\infty}(s)$. Note that the limit exists because $P_{\A_N}(s) \leq P_{\A_{N+1}}(s) \leq P_{\A_\infty}(s)$, for all $N\in\N$. In particular, $\lim_{N\to\infty}P_{\A_N}(s) \leq P_{\A_\infty}(s)$, so we only need to show that $P_{\A_\infty}(s) \leq \lim_{N\to\infty}P_{\A_N}(s)$. To that end it suffices to prove that for any $q>0$ with $q<P_{\A_\infty}(s)$, we have that $q<\lim_{N\to\infty}P_{\A_N}(s)$.\\
Since $P_{\A_\infty}(s)=\sup_{n \in \N} \left(\sum_{A\in \A_\infty^n} \norm{A}^{-2s}\right)^{\frac{1}{n}}$, there exists $n_0$ sufficiently large that
\[
\left(\sum_{A\in\A_\infty^{n_0}} \norm{A}^{-2s}\right)^{\frac{1}{n_0}}>q.
\]
Also, as
\[
\lim_{N \to \infty} \left( \sum_{A\in \A_N^{n_0}} \norm{A}^{-2s}\right)^{\frac{1}{n_0}}= \left(\sum_{A\in\A_\infty^{n_0}} \norm{A}^{-2s}\right)^{\frac{1}{n_0}},
\]
we can choose $N_0$ sufficiently large that
\[
\left( \sum_{A\in\A_{N_0}^{n_0}} \norm{A}^{-2s}\right)^{\frac{1}{n_0}}>q.
\]
Therefore
\[
q<\left( \sum_{A\in\A_{N_0}^{n_0}} \norm{A}^{-2s}\right)^{\frac{1}{n_0}} \leq \sup_{n \in \N} \left( \sum_{A\in\A_{N_0}^n} \norm{A}^{-2s}\right)^{\frac{1}{n}}=P_{\A_{N_0}}(s) \leq \lim_{N\to\infty}P_{\A_N}(s).
\]
Hence, $P_{\A_\infty}(s)=\lim_{N\to\infty}P_{\A_N}(s)=\sup_N P_{\A_N}(s)$, concluding the proof of our claim.\\
We will show that
\[
\mbox{\textbf{(i)}}   \ \ \delta_\A=\delta_{\A_\infty},  \ \ \ \ \ \  
\mbox{\textbf{(ii)}}  \ \ \delta_{\A_\infty}=s_{\A_\infty},  \ \ \ \ \ \ 
\mbox{\textbf{(iii)}} \ \ s_{\A_{\infty}}=\sup_N s_{\A_N}, \ \ \ \ \ \ 
\mbox{\textbf{(iv)}} \ \sup_N s_{\A_N} =s_\A. \ \ \ \ \ \ 
\]
Proof of  \textbf{(i)}. Since $\A_\infty^* \subseteq \A^*$, we have $\delta_{\A_\infty} \leq \delta_\A$. Also,
\begin{align*}
\zeta_\A(s) &= \sum_{n=1}^{\infty} \sum_{\substack{\i \in \I^n \\ i_1=0}} \norm{A_\i}^{-2s}+\sum_{n=1}^{\infty} \sum_{\substack{\i \in \I^n \\ i_1\neq 0}} \norm{A_\i}^{-2s}\\
&\leq \zeta_{\A_\infty}(s) + \norm{A_0}^{2s}   \sum_{n=1}^{\infty} \sum_{\substack{\i \in \I^n \\ i_1\neq 0}} \norm{A_0A_\i}^{-2s}\\
&= (1+\norm{A_0}^{2s}) \zeta_{\A_\infty}(s).
\end{align*}
Hence we also have $\delta_\A \leq \delta_{\A_\infty}$.\\
Proof of \textbf{(ii)}. Since $s_{\A_\infty}\leq \delta_{\A_\infty}$, we can assume that $s_{\A_\infty}<\infty$ because otherwise there is nothing to prove. From the root test we can see that $P_{\A_\infty}(s) \geq 1$, for $s \leq \delta_{\A_\infty}$, and $P_{\A_\infty}(s) \leq 1$, for $s \geq \delta_{\A_\infty}$. So $\delta_{\A_\infty}$ is a root of $P_{\A_\infty}$.  Recall the uniformly hyperbolic set
\[
\Gamma=\{A_0B\colon B\in\A^*\},
\]
from Proposition \ref{uh-sub}. Note that since $\A_\infty \subseteq \Gamma$, the countable set $\A_\infty$ is uniformly hyperbolic. Let $\lambda>1$ such that for any $A \in \A_\infty^n$, $\norm{A} \geq \lambda^n$, for all $n$ large enough. We will show that this uniform hyperbolicity implies that $P_{\A_\infty}$ has a unique root. To see this, let $s=s_{\A_\infty}+\epsilon>s_{\A_\infty}$. Then
\[
\sum_{A \in \A_\infty^n} \norm{A}^{-2(s_{\A_\infty}+\epsilon)} \leq \frac{1}{\lambda^{2n\epsilon}} \sum_{A \in \A_\infty^n} \norm{A}^{-2s_{\A_\infty}}.
\]
In particular, $P_{\A_\infty}(s) \leq \frac{1}{\lambda^{2\epsilon}}\, P_{\A_\infty}(s_{\A_\infty})=\frac{1}{\lambda^{2\epsilon}}<1$. Therefore $\delta_{\A_\infty}$ is the unique root of $P_{\A_\infty}$, that is, $s_{\A_\infty}=\delta_{\A_\infty}$ as required.\\
Proof of  \textbf{(iii)}. Since $\A_N \subseteq \A_\infty$, clearly $\sup_N s_{\A_N} \leq s_{\A_\infty}$. Now, let $s< s_{\A_\infty}$ so that $P_{\A_\infty}(s)= \sup_n \left(\sum_{A \in A_\infty^n}\norm{A}^{-2s}\right)^{\frac{1}{n}}>1$. We can choose $n \in \N$ sufficiently large so that $\left(\sum_{A \in \A_\infty^n} \norm{A}^{-2s}\right)^{\frac{1}{n}}>1$. Moreover, since $\lim_{N \to \infty} \left(\sum_{A \in \A_N^n} \norm{A}^{-2s}\right)^{\frac{1}{n}}=\left(\sum_{A \in \A_\infty^n} \norm{A}^{-2s}\right)^{\frac{1}{n}}$, we can choose $N \in \N$ sufficiently large that $\left(\sum_{A \in \A_N^n} \norm{A}^{-2s}\right)^{\frac{1}{n}}>1$, so in particular $P_{\A_N}(s)>1$. Therefore $s< \sup_N s_{\A_N}$, that is, $s_{\A_\infty} \leq \sup_N s_{\A_N}$.\\
Proof of \textbf{(iv)}. Note that since $s_\A \leq \delta_\A= \sup_N s_{\A_N}$ by the conditions \textbf{(i-iii)} above, it is sufficient to prove that $\sup_N s_{\A_N} \leq s_\A$. For a contradiction we assume that $s_{\A} < s_{\A_N}$ for some $N \in \N$. Then we can choose $C>1$ and $n_0 \in \N$ such that 
\begin{equation}\label{C}
\left(\sum_{A \in \A_N^n} \norm{A}^{-2s_\A}\right)^{\frac{1}{n}} \geq C,
\end{equation}
for all $n \geq n_0$. Let $\epsilon>0$ and $n_1 \geq n_0$ such that $C^{\frac{1}{N}}(1-\epsilon) \norm{A_1}^{-\frac{2s_\A}{n_1}}>1$. Note that there exists $M \in \N$ such that for all $m \geq M$, 
\begin{equation}\label{eps}
\left(\sum_{A \in \A^m} \norm{A}^{-2s_\A} \right)^{\frac{1}{m}} \geq 1-\epsilon. 
\end{equation}
Given $A \in \A_N^k$, write $|A|=l$ if $A\in\A^l$. Now, observe that for any $k \in \N$,
\begin{align}\label{rewrite}
\sum_{A \in \A^{Nk}} \norm{A}^{-2s_\A} \geq &\sum_{\substack{A \in \A_N^k\\ |A|=Nk}} \norm{A}^{-2s_\A}+ \sum_{\substack{A \in \A_N^k\\ |A|=Nk-1}} \norm{AA_0}^{-2s_\A} \,+ \nonumber\\
&+  \sum_{\substack{A \in \A_N^k\\ k \leq |A|\leq Nk-2}} \sum_{\i \in \I^{Nk-1-|A|}} \norm{AA_0A_\i}^{-2s_\A}. 
\end{align}
Put $c= \min_{1 \leq m \leq M-1} \sum_{\i \in \I^m} \norm{A_\i}^{-2s_\A},$ and observe that by \eqref{eps} we have that for any $l \leq Nk-2$,
\[
\sum_{\i \in \I^{Nk-1-l}} \norm{A_\i}^{-2s_\A} \geq \min \{c, (1-\epsilon)^{Nk-1-l}\} \geq \min\{c, (1-\epsilon)^{Nk}\}=(1-\epsilon)^{Nk},\]
whenever $k \geq k_0$ for some sufficiently large $k_0$. Finally, fix $k \geq \min\{n_1, k_0\}$. Then, from (\ref{rewrite}) we obtain,
\begin{align*}
\sum_{A \in \A^{Nk}} \norm{A}^{-2s_\A} &\geq \sum_{\substack{A \in \A_N^k\\ |A|=Nk}} \norm{A}^{-2s_\A}+ \norm{A_0}^{-2s_\A}\left(\sum_{\substack{A \in \A_N^k\\ |A|=Nk-1}} \norm{A}^{-2s_\A}\right) +\\
& +  \norm{A_0}^{-2s_\A}\left(\sum_{\substack{A \in \A_N^k\\ k \leq |A|\leq Nk-2}} \norm{A}^{-2s_\A}\left(\sum_{\i \in \I^{Nk-1-|A|}} \norm{A_\i}^{-2s_\A}\right)\right) \\
&\geq \norm{A_0}^{-2s_\A}(1-\epsilon)^{Nk}\sum_{A \in \A_N^k} \norm{A}^{-2s_\A}.
\end{align*}
Thus by (\ref{C}),
\begin{align*}
\left(\sum_{A \in \A^{Nk}} \norm{A}^{-2s_\A} \right)^{\frac{1}{Nk}} &\geq  \norm{A_0}^{-\frac{2s_\A}{Nk}}(1-\epsilon)\left(\sum_{A \in \A_N^k} \norm{A}^{-2s_\A}\right)^{\frac{1}{Nk}}\\
&\geq C^{\frac{1}{N}} (1-\epsilon) \norm{A_0}^{-\frac{2s_\A}{Nk}} >1.
\end{align*}
In particular, $P_{\A^{N}}(s_{\A}) >1$, which is a contradiction because $s_\A=s_{\A^N}$. Thus the proof of \textbf{(iv)} is complete.
\end{proof}

\subsection{A lower bound for the dimension}

In this subsection, using Proposition \ref{uh-sub} and Theorem \ref{s=d} we establish a lower bound for the dimension formula in Theorem \ref{MAIN}, for irreducible $\A$.

\begin{prop}\label{lowerbound}
Let $\A$ be a finite, irreducible, Diophantine  and semidiscrete subset of $\SL$. Then
\[
\min\{1,\delta_\A\}\leq \hd(K_\A).
\]
\end{prop}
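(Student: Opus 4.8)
The plan is to obtain the lower bound from Theorem~\ref{sol_thm} by exhausting $\A^*$ from inside by finite uniformly hyperbolic subsystems. Replacing $\A$ by a suitable power if necessary — which affects neither $\delta_\A$ nor the hypotheses, and can only shrink $K_\A$, hence only weakens the conclusion — we may assume that the matrix $A_0$ furnished by Proposition~\ref{uh-sub} lies in $\A$. Then form the sets $\A_\infty=\{A_0B:B\in(\A\setminus\{A_0\})^*\}\cup\{A_0\}$ and $\A_N=\A_\infty\cap\bigcup_{m\leq N}\A^m$ as in the proof of Theorem~\ref{s=d}. Each $\A_N$ is finite and, being contained in the uniformly hyperbolic set $\Gamma$ of Proposition~\ref{uh-sub}, uniformly hyperbolic; consequently $\delta_{\A_N}=s_{\A_N}$, the sequence $(s_{\A_N})_N$ is nondecreasing (as $\A_N\subseteq\A_{N+1}$), and the computation in the proof of Theorem~\ref{s=d} gives $\sup_N s_{\A_N}=s_{\A_\infty}=\delta_{\A_\infty}=\delta_\A$; thus $\delta_{\A_N}\nearrow\delta_\A$. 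Moreover $\A_N^*\subseteq\A^*$, so $K_{\A_N}\subseteq K_\A$ and $\hd K_{\A_N}\leq\hd K_\A$.

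To apply Theorem~\ref{sol_thm} to each $\A_N$ I must still check that, for $N$ large, (a) $K_{\A_N}$ is not a singleton and (b) $\A_N$ is Diophantine. For (a): from $\bigcup_N\A_N^*=\A_\infty^*$ it follows that $K_{\A_\infty}\subseteq\overline{\bigcup_N K_{\A_N}}$, and $K_{\A_\infty}$ cannot be a single point $\{x\}$, for then — every element of $\A_\infty^*$ being hyperbolic — $\phi_{A_0}$ and each $\phi_{A_0A_i}$ would have attracting fixed point $x$, forcing $\phi_{A_i}(x)=x$ for all $i$ and contradicting the irreducibility of $\A$; since $K_{\A_N}$ increases with $N$, it is eventually nondegenerate. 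For (b): the generators of $\A_N$ are words in $\A$ beginning with $A_0$ and containing no further occurrence of $A_0$, so $\A_N$ is uniquely decipherable (split a word before every $A_0$), and distinct length-$n$ words over the alphabet $\A_N$ correspond to distinct words of $\A$ of length at most $Nn$; cancelling the longest common prefix by left-invariance of $d$, and using the semidiscreteness of $\A$ to handle the residual case in which one word is a prefix of the other, the Diophantine estimate for $\A$ transfers to $\A_N$ with constant $c^N$.

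Granting (a) and (b), Theorem~\ref{sol_thm} yields $\hd K_{\A_N}=\min\{1,\delta_{\A_N}\}$ for all large $N$, so that
\[
\min\{1,\delta_{\A_N}\}\leq\hd K_\A ,
\]
and letting $N\to\infty$, using $\delta_{\A_N}\nearrow\delta_\A$ and reading $\min\{1,\infty\}=1$ when $\delta_\A=\infty$, gives $\min\{1,\delta_\A\}\leq\hd K_\A$.

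The main obstacle I anticipate is step (b). The Diophantine hypothesis on $\A$ bounds the $\SL$-distance only between pairs of \emph{equal-length} words in $\A$, whereas the generators of $\A_N$ have varying lengths in the alphabet of $\A$; the naive transfer by the triangle inequality fails, because right multiplication by a long word displaces points by a super-exponential amount in the left-invariant metric. Making the argument work requires more care, most cleanly through the description of the Diophantine property in terms of separation of the induced action on $\SL$ (as in \cite[\S 2.3]{hochman_sol}) together with semidiscreteness. A minor secondary point is the bookkeeping behind the reduction to $A_0\in\A$ and the nondegeneracy in (a).
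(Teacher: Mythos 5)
Your high-level plan is the same as the paper's: exhaust by finite uniformly hyperbolic subsystems contained in the set $\Gamma$ of Proposition~\ref{uh-sub}, apply Theorem~\ref{sol_thm} to each, and pass to the supremum of the critical exponents via Theorem~\ref{s=d}. The difference lies in which finite subsystems you choose, and that difference is exactly where the gap you flag becomes fatal.

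You take the truncations $\A_N=\A_\infty\cap\bigcup_{m\leq N}\A^m$, whose generators are $\A$-words of \emph{varying} lengths between $1$ and $N$. As you observe, to apply Theorem~\ref{sol_thm} you must verify that $\A_N$ is Diophantine, but the Diophantine hypothesis on $\A$ only controls $d(A_\i,A_\j)$ for $\i\neq\j$ of \emph{equal} length in $\A$. Your sketch (cancel the longest common prefix by left-invariance, then invoke semidiscreteness) handles the case where one $\A$-word is a prefix of the other; but in the residual case, after cancellation you are left with two distinct $\A$-words of \emph{different} lengths that begin with different letters, and neither the Diophantine estimate for $\A$ nor semidiscreteness gives a lower bound on their distance. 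Padding the shorter word on the right to equalise lengths does not help, because $d$ is only left-invariant: right multiplication by a word of comparable length can move points by an amount that swamps $c^{Nk}$, so neither form of the reverse triangle inequality yields anything usable. This is not a presentational issue that "requires more care"; without a genuinely new idea (an effective version of freeness, or a different characterisation of the Diophantine condition) the transfer to $\A_N$ does not go through. The paper avoids the problem entirely by instead taking, for each $n$, the finite set $\Gamma_n=\{A_0B:B\in\A^n\}$: every element of $\Gamma_n$ is an $\A$-word of the \emph{same} length $n_0+n$ (where $A_0\in\A^{n_0}$), so distinct length-$k$ words over $\Gamma_n$ are distinct length-$k(n_0+n)$ words over $\A$, and the Diophantine property of $\A$ transfers to $\Gamma_n$ with constant $c^{n_0+n}$ and no further argument. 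The set $\Gamma_n$ is still contained in $\Gamma$ and hence uniformly hyperbolic, nondegeneracy of $K_{\Gamma_n}$ follows from irreducibility, and the identity $\sup_n\delta_{\Gamma_n}=\delta_\A$ is established via the submultiplicativity bound $\norm{A_0}^{-2s}\sum_{B\in\A^k}\norm{B}^{-2s}\leq\sum_{C\in\Gamma_k}\norm{C}^{-2s}$ together with Theorem~\ref{s=d}. I would also note, as a secondary point, that your reduction to the case $A_0\in\A$ by passing to $\A^{n_0}$ requires you to check that $\A^{n_0}$ remains irreducible; the paper's choice of $\Gamma_n$ makes this reduction unnecessary.
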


\begin{proof}
 Let $A_0$ and $\Gamma$ be given by Proposition \ref{uh-sub}. For each $n\in\N$ define the following subset of $\Gamma$,
\[
\Gamma_n=\{A_0B\colon B \in \A^n\}.
\]
For each $n \in \N$, $K_{\Gamma_n}$ is not a singleton. This is because the attracting fixed points of matrices in $\Gamma_n$ cannot all coincide by irreducibility of $\A$ and the images of these fixed points under the map $\phi_A$ all belong to $K_{\Gamma_n}$. Note that the attractor of each $\Gamma_n$ is contained in $K_\A$ and so $\hd(\cup_nK_{\Gamma_n})\leq \hd(K_\A)$. In order to conclude the proof, it suffices to show that $\min\{1,\delta_\A\}=\hd(\cup_n K_{\Gamma_n})$. Observe that all matrices in $\Gamma_n$ have the same length as words in $\A$. As $\A$ is Diophantine, it is easy to check that $\Gamma_n$ is also Diophantine, for all $n$. Furthermore, each $\Gamma_n$ is uniformly hyperbolic as a subset of the uniformly hyperbolic set $\Gamma$. Thus Theorem~\ref{sol_thm} is applicable and yields that $\hd(K_{\Gamma_n})=\min\{1,\delta_{\Gamma_n}\}$. The countable stability of the Hausdorff dimension now implies that 
\[
\hd(\cup_n K_{\Gamma_n})=\sup_n\hd(K_{\Gamma_n})=\min\{1,\sup_n \delta_{\Gamma_n}\}.
\]
Thus, our goal is to show that $\sup_n\delta_{\Gamma_n}=\delta_\A$. Since $\Gamma_n^*\subseteq \A^*$, we have that $\delta_{\Gamma_n} \leq\delta_{\A}$ and therefore $\sup_n \delta_{\Gamma_n} \leq \delta_\A$. For the other inequality, recall that by Theorem \ref{s=d} it is sufficient to show that $\sup_n s_{\Gamma_n} \geq s_\A$. Observe that the submultiplicativity of the norm implies that for all $k\in\N$, and all $s>0$
\begin{equation} \label{lb-eq}
\norm{A_0}^{-2s}\sum_{B\in\A^k}\norm{B}^{-2s}\leq \sum_{B\in\A^k}\norm{A_0B}^{-2s}= \sum_{C\in\Gamma_k}\norm{C}^{-2s}.
\end{equation}
By definition of the pressure function, for any $s > \sup_n s_{\Gamma_n}$ and $k \in \N$,
\[
\sum_{C\in\Gamma_k}\norm{C}^{-2s} \leq P_{\Gamma_k}(s) \leq 1.
\]
Therefore by (\ref{lb-eq}) we have
\[
P_\A(s) =\lim_{k \to \infty} \left(\norm{A_0}^{-2s}\sum_{B\in\A^k}\norm{B}^{-2s}\right)^{\frac{1}{k}} \leq 1.
\]
So $s_\A \leq s$, implying that $s_\A \leq \sup_n s_{\Gamma_n}$, as required.
\end{proof}

\subsection{An upper bound for the dimension}

In this section we obtain the upper bound in Theorem \ref{MAIN} for irreducible $\A$, thus settling Theorem \ref{MAIN} in this case.

To obtain an upper bound on the Hausdorff dimension we need to estimate the Hausdorff measure of a natural cover, for which we require some control over the derivative of maps in $\langle \Phi_\A\rangle$. In principle, the derivative of each map $\phi_A$ is bounded between $\norm{A}^{-2}$ and $\norm{A}^2$, and the derivative attains both bounds at some point in $\R\P^1$ -- see \cite[\S 2.4]{hochman_sol}. However, outside of a small part of $\R\P^1$ the map $\phi_A$ exhibits strong contraction properties which is the content of the next lemma, taken from \cite[Lemma 2.4]{hochman_sol}. 

Given a matrix $A \in \SL$, the matrix $A^TA$ has two eigenvalues given by $\norm{A}^2$ and $ \norm{A}^{-2}$. We let $u_A^-$ be the eigenvector corresponding to the eigenvalue $\norm{A}^{-2}$. 

\begin{lma} \label{contract}
For any $\epsilon>0$, there exists $C_\epsilon>1$ such that for any $A \in \mathrm{SL}_2(\R)$
\[
\norm{A}^{-2}\leq|\phi_A'(x)| \leq C_\epsilon \norm{A}^{-2}\quad \text{for all} \quad x \in \R\P^1 \setminus (u_A^- - \epsilon,u_A^-+ \epsilon).
\]
\end{lma}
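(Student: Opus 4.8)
The plan is to reduce to the case of a diagonal matrix via the singular value decomposition and then exploit the fact that projective rotations act isometrically on $\R\P^1$. Write $A=K_1DK_2$ with $K_1,K_2\in\mathrm{SO}(2)$ and $D=\mathrm{diag}(\norm{A},\norm{A}^{-1})$. Identify $\R\P^1$ with $\R/\pi\Z$ through the angular coordinate $\theta$, so that $d_\P$ is the quotient metric and the interval $(u_A^--\epsilon,u_A^-+\epsilon)$ is exactly the $d_\P$-ball of radius $\epsilon$ about $u_A^-$. In this coordinate $\phi_{K_j}$ is a translation $\theta\mapsto\theta+c_j$; in particular each $\phi_{K_j}$ is an isometry of $(\R\P^1,d_\P)$ with $|\phi_{K_j}'|\equiv 1$. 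Since $\phi_{AB}=\phi_A\circ\phi_B$, the chain rule gives $|\phi_A'(x)|=|\phi_D'(\phi_{K_2}(x))|$, so it suffices to understand $|\phi_D'|$ and to track how $\phi_{K_2}$ moves the point $u_A^-$.

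A direct computation in the coordinate $\theta$ (using $\tan\tilde\theta=\norm{A}^{-2}\tan\theta$ for the image angle) gives
\[
\phi_D'(\theta)=\frac{1}{\norm{A}^2\cos^2\theta+\norm{A}^{-2}\sin^2\theta}.
\]
Because $\norm{A}\geq 1$, the denominator is at most $\norm{A}^2$, which yields the lower bound $\phi_D'(\theta)\geq\norm{A}^{-2}$ for every $\theta$, hence $|\phi_A'(x)|\geq\norm{A}^{-2}$ for all $x\in\R\P^1$. The denominator is minimal — and $\phi_D'$ maximal, equal to $\norm{A}^2$ — exactly at $\theta=\tfrac\pi2$, i.e. at the direction $(0,1)$, which is the eigenvector of $D^TD=D^2$ with eigenvalue $\norm{A}^{-2}$; thus $\tfrac\pi2$ is the ``bad point'' $u_D^-$ of $\phi_D$.

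For the upper bound, assume $\epsilon<\tfrac\pi2$ (otherwise the statement is vacuous). If $\theta\in\R/\pi\Z$ has $d_\P$-distance at least $\epsilon$ from $\tfrac\pi2$, then $|\cos\theta|\geq\sin\epsilon$, so the denominator above is at least $\norm{A}^2\sin^2\epsilon$ and therefore $\phi_D'(\theta)\leq(\sin\epsilon)^{-2}\norm{A}^{-2}$. It remains to check that $\phi_{K_2}$ carries the forbidden ball around $u_A^-$ onto the forbidden ball around $u_D^-=\tfrac\pi2$: from $A^TA=K_2^{-1}D^2K_2$ the eigenvector $u_A^-$ of $A^TA$ with eigenvalue $\norm{A}^{-2}$ equals $K_2^{-1}u_D^-$ up to sign, so $\phi_{K_2}(u_A^-)=u_D^-$, and since $\phi_{K_2}$ is a $d_\P$-isometry it maps the $\epsilon$-ball about $u_A^-$ onto the $\epsilon$-ball about $\tfrac\pi2$. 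Combining this with $|\phi_A'(x)|=|\phi_D'(\phi_{K_2}(x))|$ gives $|\phi_A'(x)|\leq C_\epsilon\norm{A}^{-2}$ on $\R\P^1\setminus(u_A^--\epsilon,u_A^-+\epsilon)$ with $C_\epsilon=\max\{2,(\sin\epsilon)^{-2}\}>1$.

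The only step requiring genuine care is the bookkeeping of the rotation factors: one must confirm that $\phi_{K_1},\phi_{K_2}$ contribute no distortion to the derivative in the $\theta$-coordinate and that $\phi_{K_2}$ sends $u_A^-$ precisely to the maximal-derivative point of $\phi_D$; the remaining estimates are elementary. Of course, the statement is exactly \cite[Lemma 2.4]{hochman_sol}, so in the paper one may simply cite it rather than reproduce this argument.
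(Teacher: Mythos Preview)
Your argument is correct. The paper does not prove this lemma at all; it is stated with the attribution ``taken from \cite[Lemma 2.4]{hochman_sol}'' and used as a black box. Your proof via the singular value decomposition, reduction to a diagonal matrix, and the explicit formula $\phi_D'(\theta)=(\norm{A}^2\cos^2\theta+\norm{A}^{-2}\sin^2\theta)^{-1}$ is the standard route and is essentially what one finds in \cite{hochman_sol}; the bookkeeping you flag (that $\phi_{K_2}$ sends $u_A^-$ to $u_D^-=\tfrac{\pi}{2}$ and is a $d_\P$-isometry) is handled correctly, and the explicit constant $C_\epsilon=(\sin\epsilon)^{-2}$ is sharp enough for every later use. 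As you note yourself, in the context of the paper a citation suffices.
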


Lemma~\ref{contract} yields the following useful corollary (taken from \cite[Lemma 3.2]{solomyak}), where $\lvert F \rvert$ denotes the Lebesgue measure of a set $F\subseteq \R\P^1$.

\begin{lma} \label{pi lemma}
Let $U \subsetneq [0,\pi)$ be an open set with $\lvert U \rvert< \pi$. Then, for every $\epsilon>0$ there exists $C_\epsilon>1$ such that for any $A \in \SL$ with $(u_A^- -\epsilon, u_A^- +\epsilon) \subseteq U$, we have
\[
\pi-C_\epsilon\norm{A}^{-2}<\lvert \phi_A(U)\rvert<\pi.
\]
\end{lma}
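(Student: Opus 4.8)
The plan is to leverage the fact that $\phi_A$ is a bijection (indeed a diffeomorphism) of the circle $\R\P^1$, which has total length $\pi$. Since $U$ and its complement $\R\P^1\setminus U$ genuinely partition $\R\P^1$ and $\phi_A$ is injective and surjective, the images $\phi_A(U)$ and $\phi_A(\R\P^1\setminus U)$ also partition $\R\P^1$, so $|\phi_A(U)| = \pi - |\phi_A(\R\P^1\setminus U)|$. Thus it suffices to bound $|\phi_A(\R\P^1\setminus U)|$ above and below, and this is exactly what Lemma \ref{contract} lets us do on the set $\R\P^1\setminus U$.

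In detail: fix $\epsilon>0$ and let $C_\epsilon>1$ be the constant of Lemma \ref{contract}. The hypothesis $(u_A^- -\epsilon,u_A^- +\epsilon)\subseteq U$ says precisely that the only arc on which $|\phi_A'|$ may fail to be comparable to $\norm{A}^{-2}$ is contained in $U$; consequently $\R\P^1\setminus U \subseteq \R\P^1\setminus(u_A^- -\epsilon,u_A^- +\epsilon)$, and Lemma \ref{contract} gives $\norm{A}^{-2}\le|\phi_A'(x)|\le C_\epsilon\norm{A}^{-2}$ for every $x\in\R\P^1\setminus U$. By the change-of-variables formula applied to the restriction of $\phi_A$ to $\R\P^1\setminus U$,
\[
0 < \norm{A}^{-2}\,(\pi-|U|) \le |\phi_A(\R\P^1\setminus U)| = \int_{\R\P^1\setminus U}|\phi_A'(x)|\,dx \le C_\epsilon\norm{A}^{-2}\,(\pi-|U|),
\]
where the strict positivity on the left uses $|\R\P^1\setminus U| = \pi-|U|>0$. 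Feeding this into $|\phi_A(U)| = \pi - |\phi_A(\R\P^1\setminus U)|$ yields $|\phi_A(U)|<\pi$ from the left inequality, and $|\phi_A(U)| \ge \pi - C_\epsilon\norm{A}^{-2}(\pi-|U|) > \pi - \pi C_\epsilon\norm{A}^{-2}$ from the right one; replacing $C_\epsilon$ by $\pi C_\epsilon$ (still independent of $A$) gives the stated lower bound.

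I do not expect a genuine obstacle here; this is a short bookkeeping deduction from Lemma \ref{contract}. The only points needing a word of care are (i) that $U$ open and $\R\P^1\setminus U$ its honest set-theoretic complement partition $\R\P^1$ with no overlap, so that the additivity $|\phi_A(U)|+|\phi_A(\R\P^1\setminus U)|=\pi$ holds exactly (no null-set corrections), and (ii) that $\phi_A$, being the projective map of an $\SL$ matrix, is a $C^1$ diffeomorphism of $\R\P^1$, so the change-of-variables identity on $\R\P^1\setminus U$ is legitimate. Neither of these is substantive.
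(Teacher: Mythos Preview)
Your argument is correct: the partition $|\phi_A(U)|+|\phi_A(\R\P^1\setminus U)|=\pi$ together with the derivative bounds of Lemma~\ref{contract} on $\R\P^1\setminus U$ gives exactly the stated two-sided estimate, and your handling of the strict inequalities (using $0<|U|<\pi$) is fine. The paper itself does not supply a proof of this lemma but simply cites \cite[Lemma~3.2]{solomyak}; your write-up is the natural proof and matches what one finds there.
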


For the rest of this section we assume that $\A$ is a finite, irreducible and semidiscrete subset of $\SL$. Proposition~\ref{uh-sub} is now applicable and yields a matrix $A_0 \in \A^*$ such that the set $\Gamma=\{A_0B\colon B\in\A^*\}$ is uniformly hyperbolic. In particular, there exist open sets $U,U'$, with $\overline{U'}\subsetneq U$ and $\overline{U}\cap R_\A=\emptyset$, such that $\overline{A(U)}\subsetneq U'$. 

Our next lemma yields an estimate on the derivative of the maps in $\Phi_\Gamma$ analogous to the one in part (i) of \cite[Lemma~3.3]{solomyak}.

\begin{lma} \label{key bound}
There exists $C>0$ and $N \in \N$ such that for all $n \geq N$, $A \in \Gamma^n$ and $x \in U$,
\[
\norm{A}^{-2} \leq |\phi_{A}'(x)| \leq C \norm{A}^{-2}.
\]
\end{lma}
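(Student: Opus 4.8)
The lower bound requires no argument: $|\phi_A'(x)| \geq \norm{A}^{-2}$ for every $A \in \SL$ and every $x \in \R\P^1$ (see \cite[\S 2.4]{hochman_sol}). For the upper bound, the plan is to invoke Lemma~\ref{contract}: it suffices to produce a uniform $\epsilon_0 > 0$ such that $U$ avoids the interval $(u_A^- - \epsilon_0, u_A^- + \epsilon_0)$ whenever $A \in \Gamma^n$ with $n$ large, since then the constant $C = C_{\epsilon_0}$ of Lemma~\ref{contract} works for all $x \in U$.

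First I would pin down the fixed points of $\phi_A$ for $A \in \Gamma^n$. Chaining properties (1) and (2) of Proposition~\ref{uh-sub} gives $\phi_A(\overline U) \subseteq U' \subseteq \overline U$ for every $A \in \Gamma$, and composing these, $\phi_A(\overline U) \subseteq U'$ for every $A \in \Gamma^n$, $n \geq 1$. Since $\overline U$ is a proper closed arc mapped into itself, $\phi_A$ fixes a point of $\overline U$; this forced fixed point cannot be the fixed point of an elliptic map (there are none, as $\A$ is semidiscrete), nor a parabolic or a repelling fixed point (these lie in $R_\A$, which is disjoint from $\overline U$), and $\phi_A \neq \mathrm{id}$ because $\phi_A(\overline U) \subsetneq \overline U$; hence $\phi_A$ is hyperbolic with $a(A) \in U' \subseteq \overline U$, while $r(A) \in R_\A$. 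Writing $\delta_0 \vcentcolon= d_\P(\overline U, R_\A) > 0$, this yields the uniform separation $d_\P(a(A), r(A)) \geq \delta_0$ for all $A \in \Gamma^n$, $n \geq 1$.

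The crux is then to show that $d_\P(u_A^-, r(A)) \to 0$ uniformly as $n \to \infty$, and this is where I expect the real work to lie. Because the two fixed points of $\phi_A$ are $\delta_0$-separated, conjugating $\phi_A$ by a M\"obius transformation carrying them to $0$ and $\infty$ --- represented by an $\SL$ matrix $G$ with $\norm{G}\norm{G^{-1}}$ bounded purely in terms of $\delta_0$ --- diagonalises $A$ and gives $\norm{A} \leq C_1 \lambda_1(A)$, where $\lambda_1(A) > 1$ is the spectral radius and $C_1 = C_1(\delta_0)$; combined with the trivial $\lambda_1(A) \leq \norm{A}$ and the uniform hyperbolicity of $\Gamma$ ($\norm{A} \geq c\lambda^n$ on $\Gamma^n$), this makes $\lambda_1(A) \to \infty$ uniformly on $\Gamma^n$. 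Now $|\phi_A'(a(A))| = \lambda_1(A)^{-2}$, so evaluating the derivative formula of \cite[\S 2.4]{hochman_sol},
\[
|\phi_A'(x)| = \bigl(\norm{A}^{-2}\cos^2 d_\P(x, u_A^-) + \norm{A}^2 \sin^2 d_\P(x, u_A^-)\bigr)^{-1},
\]
at $x = a(A)$ and using $\norm{A} \leq C_1 \lambda_1(A)$ yields $\sin^2 d_\P(a(A), u_A^-) \geq C_1^{-2} - \norm{A}^{-4}$, hence $d_\P(a(A), u_A^-) \geq \rho_0$ for some uniform $\rho_0 > 0$ once $n$ is large. Separately, Lemma~\ref{contract} shows that $\phi_A$ maps the arc $\R\P^1 \setminus (u_A^- - \epsilon, u_A^- + \epsilon)$ onto an arc of length at most $\pi C_\epsilon \norm{A}^{-2}$, so if both fixed points lay outside the $\epsilon$-interval they would --- being fixed --- lie in that short arc, forcing $d_\P(a(A), r(A)) \leq \pi C_\epsilon \norm{A}^{-2} < \delta_0$, impossible for $n$ large. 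Fixing $\epsilon_0 < \min\{\rho_0, \delta_0/2\}$, I conclude that for $n$ large at least one of $a(A), r(A)$ lies in $(u_A^- - \epsilon_0, u_A^- + \epsilon_0)$, and since $a(A)$ is at distance $\geq \rho_0 > \epsilon_0$ from $u_A^-$ it must be $r(A)$; therefore $d_\P(u_A^-, \overline U) \geq d_\P(r(A), \overline U) - \epsilon_0 \geq \delta_0 - \epsilon_0 > \epsilon_0$, so $U$ avoids $(u_A^- - \epsilon_0, u_A^- + \epsilon_0)$. Choosing $N$ large enough that $\norm{A} \geq c\lambda^N$ validates every ``$n$ large'' invoked above, Lemma~\ref{contract} with $\epsilon = \epsilon_0$ completes the argument. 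The genuinely delicate ingredient is the uniform estimate $d_\P(u_A^-, r(A)) \to 0$: this is where the uniform separation $d_\P(a(A), r(A)) \geq \delta_0$ coming from Proposition~\ref{uh-sub} takes over the role played by the disjointness $K_\A \cap R_\A = \emptyset$ in the uniformly hyperbolic setting of \cite{solomyak}.
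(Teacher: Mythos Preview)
Your argument is correct, but it follows a substantially different route from the paper's. You locate the fixed points of each $A \in \Gamma^n$ ($a(A)\in\overline U$, $r(A)\in R_\A$), then use a spectral comparison $\norm{A}\leq C_1\lambda_1(A)$ together with the explicit derivative formula from \cite[\S2.4]{hochman_sol} to pin $u_A^-$ within $\epsilon_0$ of $r(A)$, hence $\epsilon_0$-far from $\overline U$. The paper instead argues by contradiction via Lemma~\ref{pi lemma}: take $\epsilon$-fattened neighbourhoods $U\subsetneq U_1\subsetneq U_2$ still invariant under $\Phi_{\Gamma^*}$; if $u_A^-\in U_1$ then $(u_A^--\epsilon,u_A^-+\epsilon)\subseteq U_2$, so by Lemma~\ref{pi lemma} we would have $\lvert\phi_A(U_2)\rvert>\pi-C_\epsilon\norm{A}^{-2}>\lvert U_2\rvert$ once $n$ is large, contradicting the invariance $\phi_A(U_2)\subseteq U_2$. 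This bypasses all fixed-point localisation and the conjugation/spectral step, using only the invariance of a slightly enlarged $U$ under $\Gamma^*$.

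What your approach buys is extra information: you actually prove that $u_A^-$ accumulates on $R_\A$ (via $r(A)$), which is geometrically more transparent and could be useful elsewhere. The paper's approach buys brevity---it is about a third the length and needs neither the spectral radius comparison nor the explicit derivative formula, only the coarser measure estimate of Lemma~\ref{pi lemma}. Both are valid; the paper's is the slicker proof of \emph{this} lemma, but your fixed-point analysis is a reasonable thing to have in hand for the broader picture of where $u_A^-$ sits.
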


\begin{proof}
Since $\overline{U}\cap R_\A=\emptyset$, we can fix $\epsilon>0$ small enough such that if $U_1 \subsetneq U_2$ are such that $U_1$ is an open $\epsilon$-neighbourhood of $U$, and $U_2$ is an open $\epsilon$-neighbourhood of $U_1$, then
\begin{equation}\label{c}
\bigcup_{A\in\Gamma^*}\phi_A\left(\overline{U_i}\right)\subsetneq U_i, \ \text{for}\  i=1,2.
\end{equation}
For the fixed $\epsilon$ used in the definition of $U_1$ and $U_2$, Lemma \ref{pi lemma} implies that there exists $C_\epsilon>1$ and $N$ be large enough, such that
\begin{equation}\label{pi thing}
\pi - \frac{C_\epsilon}{\norm{B}^{2}}>\lvert U_2\rvert,\ \text{for all}\ B\in \Gamma^N,
\end{equation}
where $\lvert U_2 \rvert$ denotes the Lebesgue measure of $U_2$ in $\R\P^1$. Suppose that for some $n \geq N$, there exists $B\in \Gamma^n$ such that $u^-_{B} \in U_1$, that is, $(u^-_{B}-\epsilon, u^-_{B}+\epsilon) \subseteq U_2$. By Lemma \ref{pi lemma},
\[
\lvert\phi_{B}(U_2)\rvert>\pi- \frac{C_\epsilon}{\norm{B}^2}>\lvert U_2\rvert,
\]
which is a contradiction by the invariance of $U_2$. Hence for all $B \in \bigcup_{n \geq N} \Gamma^n$, the points $u^-_{B}$ do not lie in $U_1$, and therefore $(u^-_{B}-\epsilon, u^-_{B}+\epsilon)$ does not intersect $U$. The desired inequality now follows by applying Lemma \ref{contract} for the point $x \in U$ and letting $C=C_\epsilon$.
\end{proof}

Using Lemma \ref{key bound} we can obtain the desired upper bound.

\begin{prop}\label{ub}
Suppose $\A$ is a finite, irreducible and semidiscrete subset of $\SL$. Then
\[
\dim K_\A \leq \min\{1,\delta_\A\}.
\]
\end{prop}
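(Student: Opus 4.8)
The plan is as follows. Since $K_\A\subseteq\R\P^1$ gives $\hd K_\A\leq 1$ for free, it suffices to prove $\hd K_\A\leq\delta_\A$, which is vacuous unless $\delta_\A<1$; so I would assume $\delta_\A<1$, fix $s\in(\delta_\A,1)$, and aim to produce covers of $K_\A$ of arbitrarily small mesh whose total $s$‑content tends to $0$, giving $\mathcal H^s(K_\A)=0$, hence $\hd K_\A\leq s$ for every such $s$, and finally $\hd K_\A\leq\delta_\A$. The argument would go by induction on $\#\A$. When $\A$ is reducible, or $K_\A$ is a singleton, the bound is already available from Section~\ref{eg} (together with the fact that the \emph{upper} bound in Theorem~\ref{sol_thm} does not require the Diophantine hypothesis), so I would assume $\A$ irreducible and the proposition known for all finite semidiscrete sets of smaller cardinality. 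After passing to a power $\A^{n_0}$ — harmless for $K_\A$, $\delta_\A$, semidiscreteness and irreducibility — I would arrange that the matrix $A_0$ supplied by Proposition~\ref{uh-sub} is a generator, and set $\A'\vcentcolon=\A\setminus\{A_0\}$. Then $\A'$ is semidiscrete, and by the inductive hypothesis (or the reducible case, or triviality) $\hd K_{\A'}\leq\min\{1,\delta_{\A'}\}=\delta_{\A'}\leq\delta_\A<s$.

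The core would be a decomposition of $K_\A=\bigcup_{\i\in\I^\N}\Pi_\psi(\i)$ governed by the occurrences of $A_0$ in the code $\i$. I would use that $K_\A\subseteq\R\P^1\setminus V$ and $\phi_{A_0}(\R\P^1\setminus V)\subseteq U'$ with $\overline{U'}\subsetneq U$, $\overline U\cap R_\A=\emptyset$ (Proposition~\ref{uh-sub} and the remarks before Lemma~\ref{key bound}), and introduce $\Gamma'\vcentcolon=\{A_0B:B\in\A^*\}\cup\{A_0\}$; exactly as in Proposition~\ref{uh-sub} one checks $\bigcup_{g\in(\Gamma')^*}\phi_g(\overline U)\subseteq U'$, so $\Gamma'$ is uniformly hyperbolic and Lemma~\ref{key bound} applies to it, with constants $C>0$, $\lambda>1$, $N\in\N$. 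I would then split $\I^\N=Y_1\sqcup Y_2\sqcup Y_3$, where $Y_1$ are the codes beginning with $A_0$ and containing infinitely many $A_0$'s, $Y_2$ those with only finitely many $A_0$'s, and $Y_3$ those not beginning with $A_0$ but containing infinitely many $A_0$'s, and treat the three corresponding pieces of $K_\A$ separately.

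For $\i\in Y_2$: with $v$ the prefix of $\i$ up to and including its last $A_0$ (empty if $\i$ uses no $A_0$) one has $\Pi_\psi(\i)=\phi_{A_v}\!\bigl(\Pi_\psi(\sigma^{|v|}\i)\bigr)$ with $\Pi_\psi(\sigma^{|v|}\i)\in K_{\A'}$, so $\Pi_\psi(Y_2)\subseteq\bigcup_v\phi_{A_v}(K_{\A'})$ is a countable union of Lipschitz images of $K_{\A'}$ and hence $\mathcal H^s$‑null since $\hd K_{\A'}<s$. For $\i\in Y_3$: factoring off the nonempty $A_0$‑free prefix $u$ before the first $A_0$ gives $\Pi_\psi(\i)\in\phi_{A_u}(\Pi_\psi(Y_1))$, so $\Pi_\psi(Y_3)\subseteq\bigcup_u\phi_{A_u}(\Pi_\psi(Y_1))$, and since each $\phi_{A_u}$ is Lipschitz it suffices to prove $\mathcal H^s(\Pi_\psi(Y_1))=0$. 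For that, given $\i\in Y_1$ and $m\geq N$, let $n=n_m(\i)$ be the position of the $(m{+}1)$‑st occurrence of $A_0$ and put $w\vcentcolon=A_{\i\lvert n}$. As $\i\lvert n$ begins and ends with $A_0$ and has exactly $m+1$ occurrences of $A_0$, splitting at these occurrences writes $w=w'A_0$ with $w'\in(\Gamma')^{m}$, and
\[
\Pi_\psi(\i)\in\phi_{w}(K_\A)=\phi_{w'}\bigl(\phi_{A_0}(K_\A)\bigr)\subseteq\phi_{w'}(U')\subseteq\phi_{w'}(\overline U).
\]
By Lemma~\ref{key bound}, $\operatorname{diam}\phi_{w'}(\overline U)\leq C\norm{w'}^{-2}\leq C\norm{A_0}^{2}\norm{w}^{-2}$, while uniform hyperbolicity of $\Gamma'$ gives $\norm{w'}\geq c\lambda^{m}$, so these diameters tend to $0$ uniformly as $m\to\infty$. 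Writing $\mathcal W_m$ for the collection of words $\i\lvert n$ so produced (pairwise distinct, each of length $\geq m+1$), one gets
\[
\sum_{w\in\mathcal W_m}\bigl(\operatorname{diam}\phi_{w'}(\overline U)\bigr)^s\leq\bigl(C\norm{A_0}^2\bigr)^s\sum_{w\in\mathcal W_m}\norm{w}^{-2s}\leq\bigl(C\norm{A_0}^2\bigr)^s\sum_{\ell\geq m+1}\,\sum_{\i\in\I^\ell}\norm{A_\i}^{-2s},
\]
a constant times a tail of the convergent series $\zeta_\A(s)$, which tends to $0$ as $m\to\infty$. Hence $\{\phi_{w'}(\overline U):w\in\mathcal W_m\}$ covers $\Pi_\psi(Y_1)$ with vanishing mesh and vanishing $s$‑content, so $\mathcal H^s(\Pi_\psi(Y_1))=0$; combining the three pieces gives $\mathcal H^s(K_\A)=0$, closing the induction.

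The hard part is the piece $Y_2$. Because $\A$ is merely semidiscrete one has $K_\A\cap R_\A\neq\emptyset$ (Lemma~\ref{uh2}), so $\Phi_\A$ is not uniformly contracting near that intersection and $K_\A$ cannot be covered directly by images of $\overline U$ as in the uniformly hyperbolic proof; one is forced to peel off the residual, non‑contracting subsystem $\A'$ and bound its attractor dimension separately — which is precisely what creates the induction — and the delicate point is to make this recursion coexist cleanly with the normalisation that promotes $A_0$ to a generator, so that the induction genuinely terminates.
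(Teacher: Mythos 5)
Your approach is genuinely different from the paper's, and it has a gap.

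The paper's proof is much shorter: it shows directly that $\phi_{A_0}(K_\A)\subseteq K_\Gamma$, where $\Gamma=\{A_0B:B\in\A^*\}$. The argument is two lines long and purely limit-set-theoretic: if $x\in\att(F_\A)$ then $f_{A_n}(i)\to x$ for some $(A_n)\subseteq\A^*$, whence $f_{A_0A_n}(i)\to f_{A_0}(x)$ with $A_0A_n\in\Gamma$, so $f_{A_0}(x)\in\att(F_\Gamma)$. Since $\phi_{A_0}$ is bi-Lipschitz, $\hd K_\A=\hd\phi_{A_0}(K_\A)\leq\hd K_\Gamma$; and $K_\Gamma$, being the attractor of the (countable) uniformly hyperbolic system $\Gamma$, is covered by $\{\phi_B(U)\}_{B\in\Gamma^n}$ with $s$-content bounded via Lemma~\ref{key bound} by (a constant times) $\zeta_\Gamma(s)$, giving $\hd K_\Gamma\leq\delta_\Gamma\leq\delta_\A$. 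In particular the paper covers \emph{all} of $\phi_{A_0}(K_\A)$ at one go; the points you assign to your block $Y_2$ are handled automatically, since $\att(F_\Gamma)$ is a \emph{closure} and therefore captures accumulation points with codes containing only finitely many $A_0$'s, without having to remove any generator or recurse into a subsystem. Your pieces $Y_1$ and $Y_3$ are in effect a re-derivation of this covering estimate after conjugating by a cylinder map, and those parts of your argument are sound.

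The genuine gap is in the $Y_2$ step. You propose an induction on $\#\A$, but in the inductive step you first replace $\A$ by $\A^{n_0}$ (so that $A_0$ becomes a generator) and then set $\A'=\A^{n_0}\setminus\{A_0\}$. This inflates the cardinality: $\#\A^{n_0}-1$ is in general much larger than $\#\A$ (for instance $(\#\A)^{n_0}-1$ when $\A^*$ is free), so the inductive hypothesis, stated for semidiscrete sets of strictly smaller cardinality than $\A$, does not apply to $\A'$. There is no visible way to fix this without changing the complexity measure, and passing to a higher power of $\A'$ in the next round compounds the problem rather than resolving it. Moreover there is no structural reason to expect $\A'$ to become reducible or uniformly hyperbolic after dropping $A_0$: the obstruction to uniform hyperbolicity (a parabolic in $\A^*$, or $K_\A\cap R_\A\neq\emptyset$) is generic and not tied to the particular contracting element $A_0$. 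So as written the recursion need not terminate. The paper sidesteps all of this precisely because the single bi-Lipschitz image $\phi_{A_0}(K_\A)$ already sits inside the uniformly hyperbolic attractor $K_\Gamma$ — there is simply no residual subsystem to analyse.
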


\begin{proof}
Let $\delta_\Gamma$ be the critical exponent of the zeta function for $\Gamma$ and note that, as $\Gamma^*\subseteq \A^*$, we have that $\delta_\Gamma\leq \delta_\A$. Take $\epsilon>0$, and let $N$ be given by Lemma \ref{key bound}. Since $K_\Gamma\subseteq U'$, where $K_\Gamma$ denotes the attractor of $\Gamma$, and since $\Gamma$ is uniformly hyperbolic, we can take $n$ sufficiently large so that $\{\phi_{B}(U)\}_{B\in \Gamma^n}$ is a $\epsilon$-cover of $K_\Gamma$. Then, for all $s>\delta_\Gamma$, we have that
\[
\sum_{B\in \Gamma^n} \lvert\phi_{B} U\rvert^s \leq C^s \sum_{B\in \Gamma^n} \norm{B}^{-2s},
\]
by Lemma \ref{key bound}. In particular the $s$-dimensional Hausdorff measure of $K_\Gamma$, denoted by $H^s(K_\Gamma)$ satisfies
\[
H^s(K_\Gamma) \leq \lim_{n \to \infty} C^s \sum_{B \in \Gamma^n} \norm{B}^{-2s} \leq C^s \zeta_\A(s)< \infty.
\]
So $\hd(K_\Gamma) \leq s$, but since $s>\delta_\Gamma$ was chosen arbitrarily, $\hd K_\Gamma \leq \delta_\Gamma\leq \delta_\A$.\\
In order to conclude the proof of the upper bound, it suffices to show that $\hd(K_\Gamma)=\hd(K_\A)$. Because $K_\Gamma \subseteq K_\A$, we have that $\hd(K_\Gamma)\leq \hd(K_\A)$. For the other side of this inequality, observe that $\hd(\phi_{A_0}(K_\A))=\hd(K_\A)$. Our proof will thus be complete upon showing that $\phi_{A_0}(K_\A)\subseteq K_\Gamma$. Note that since the map $\psi$ is a bijection, by Theorem \ref{attlimset} this is equivalent to showing that $f_{A_0}(\att(F_\A))\subseteq \att(F_\Gamma)$.\\
Let $x\in\att(F_\A)$. By definition, there exists a sequence $(A_n)\subseteq \A^*$, such that $(f_{A_n}(i))$ converges to $x$. Hence, $f_{A_0}f_{A_n}(i)=f_{A_0A_n}(i)$ converges to $f_{A_0}(x)$, as $n\to \infty$. But, $A_0A_n$ lies in $\Gamma$, for all $n$, and so $f_{A_0}(x)\in \att(F_\Gamma)$, as required.
\end{proof}

\section{Continuity of the dimension}\label{cty_sect}

In this section we will prove Theorem \ref{cty}. The proof will follow in two steps. First we will prove a continuity result for the critical exponent, Theorem \ref{cty_str} which will settle the cases where the limit point $\A$ in the statement of Theorem \ref{cty} is either (a) a finite, semidiscrete, irreducible set $\A$ or (b) a finite, uniformly hyperbolic, reducible set $\A$. Then we will separately consider the continuity of the dimension in the case that the limit point $\A$ is reducible and semidiscrete but not uniformly hyperbolic.

Let us first consider the action of $\SL$ on $\R^2$. A \emph{cone} $C$ in $\R^2$ is defined to be a set
\[
C=\{xv_1+yv_2\colon x,y>0\},
\]
for some linearly independent vectors $v_1,v_2$ in $\R^2$. A \emph{multicone} in $\R^2$ is a finite collection of cones in $\R^2$, with disjoint closures. Each multicone $M\subsetneq \R\P^1$ corresponds to a multicone $\tilde{M}$ in $\R^2$ and vice versa. Hence, if $A\in\SL$ and $M$ is a multicone in $\R\P^1$ that gets mapped inside itself by $\phi_A$, then there exists a multicone $\tilde{M}$ in $\R^2$ that gets mapped inside itself by $A$.

We now present a standard result which shows that the norm, when defined on a set of matrices which maps a multicone compactly inside itself, inherits a certain multiplicativity property.

\begin{lma} \label{almost}
Let $K', K\subsetneq \R\P^1$ be two multicones with $\overline{K'} \subsetneq K$. There exists $c>0$ that depends only on $K, K'$ such that for any $A, B\in \SL$ with $\phi_A(K)\subseteq K'$ and $\phi_B(K)\subseteq K'$ we have
\[
\norm{AB} \geq c\norm{A}\norm{B}.
\]
\end{lma}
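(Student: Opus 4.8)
The plan is to exploit the fact that matrices mapping the multicone $K$ compactly inside $K'$ cannot "undo" the expansion direction of another such matrix. Concretely, recall that for $A \in \SL$ the singular value decomposition gives orthonormal pairs $u_A^{\pm}$, where $A$ maps a neighbourhood of the line $\R u_A^+$ onto a neighbourhood of the line $\R u_A^-{}^{\perp}$-direction; more precisely the top singular direction of $A$ (the direction in which $A$ expands by $\norm{A}$) is $u_A^+$, and the image $\phi_A(\R\P^1)$ concentrates near the class of $A u_A^+$, which is the top singular direction of $A^T$. Lift $K, K'$ to multicones $\tilde K, \tilde K'$ in $\R^2$ as in the paragraph preceding the lemma. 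The key geometric observation is: if $\phi_A(K) \subseteq K'$ then the class of $A u_A^+$ lies in $\overline{K'} \subsetneq K$, while the "bad" direction $u_A^+$ itself (the class where $\phi_A$ can have large derivative) — no, what we actually need is the relation between $u_B^+$ and the contracting behaviour of $A$.

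First I would set up the quantitative version. For $A, B \in \SL$ write $\norm{AB} \geq \norm{A v}\cdot$ (something) by choosing $v$ to be the top right singular vector of $B$, so $\norm{AB} \geq \norm{A(Bu_B^+)} = \norm{A}\cdot\norm{B}\cdot |\sin \angle(Bu_B^+ , u_A^-)|$ roughly, where the sine measures how far the unit vector $Bu_B^+/\norm{B u_B^+}$ is from the contracting direction $u_A^-$ of $A$ (using that $A$ acts on the orthonormal basis $\{u_A^+,u_A^-\}$ with singular values $\norm{A},\norm{A}^{-1}$). Thus it suffices to bound $|\sin\angle(Bu_B^+, u_A^-)|$ below by a constant depending only on $K,K'$. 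Now $Bu_B^+$ spans the top singular direction of $B^T$, and since $\phi_B(K)\subseteq K'$, the image line $\phi_B(\R\P^1)$ — which accumulates at the class of $Bu_B^+$ as the aspect ratio $\norm{B}^2\to\infty$, but in general $\phi_B(K)$ is a small neighbourhood of that class intersected with how $K$ sits — lies in $\overline{K'}$. The cleanest route: the class of $B u_B^+$ is $\phi_B$ of the class of $u_B^+$; if $u_B^+ \in K$ then this class lies in $\overline{K'}$. If $u_B^+ \notin K$, then since $K$ is open and the class $[u_B^+]$ is the one direction sent furthest, one still argues $\phi_B(K)$ is squeezed near $[Bu_B^+]$... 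This is the point that needs care, so instead I would use: $\phi_B(K) \subseteq \overline{K'}$ forces, by considering that $\phi_B$ contracts everything outside an $\epsilon$-neighbourhood of $[u_B^+]$, that for $\norm{B}$ large the set $\overline{K'}$ must contain $[Bu_B^+]$; and for the finitely-many-up-to-compactness remaining cases of bounded $\norm{B}$ one gets the bound from compactness directly. Dually, $\overline{K}^{c}$ contains a neighbourhood of the complement of $K$, and I must show $u_A^- \notin \overline{K'}$-type directions stay away from $[Bu_B^+]\in\overline{K'}$. So the argument reduces to: the contracting direction $u_A^-$ lies outside $K$ (or at controlled distance from $\overline{K'}$), because $\phi_A(K)\subseteq K'$ with $\overline{K'}\subsetneq K$ means $A$ cannot expand anything in $K$, forcing $[u_A^+] \in$ region mapping into $K'$...

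Let me restate the clean plan: (i) Reduce to a lower bound on the angle between $[B u_B^+]$ and $u_A^-$, uniform over the admissible $A,B$, via the SVD computation above. (ii) Show $[Bu_B^+] \in \overline{K'}$: this holds automatically when $[u_B^+]\in K$ (since then $\phi_B([u_B^+])=[Bu_B^+]\in\phi_B(K)\subseteq\overline{K'}$), and when $[u_B^+]\notin K$ one uses Lemma \ref{contract}/Lemma \ref{pi lemma}-type squeezing: for $B$ with $\norm{B}$ large, $\phi_B(K)$ has small diameter and hence $\overline{K'}$, which contains it, must also contain the accumulation point $[Bu_B^+]$; bounded-norm $B$ form a set on which everything is controlled by compactness and a separate uniform constant. (iii) Show $u_A^- \notin \overline{K}$ (hence at positive distance from $\overline{K'}\subset K$): if $u_A^- \in K$ then $\phi_A$ expands near $u_A^-$ by a factor $\to\norm{A}^2$, but $\phi_A(K)\subseteq K'$ has bounded diameter $<|K|$ — for $\norm{A}$ large this contradicts expansion on an interval around $u_A^-$ staying inside $K$, and small-norm $A$ are again handled by compactness. (iv) Combine: $\angle([Bu_B^+], u_A^-) \geq d_\P(\overline{K'}, \partial K) =: \theta_0 >0$, uniform, giving $\norm{AB}\geq c\norm{A}\norm{B}$ with $c = c(\theta_0) = c(K,K')$.

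The main obstacle is step (ii)–(iii): carefully justifying that the singular direction $[Bu_B^+]$ is forced into $\overline{K'}$ and the contracting direction $u_A^-$ is forced out of $K$, \emph{uniformly} over all admissible matrices including those of small norm. For large norm this is the squeezing argument via Lemmas \ref{contract} and \ref{pi lemma}; for bounded norm one must observe that the set of $A\in\SL$ with $\phi_A(K)\subseteq K'$ and $\norm{A}\leq R$ is compact, and on it both $[Bu_B^+]$ and $u_A^-$ depend continuously, so the relevant infimum of the angle is attained and positive (it cannot be zero without violating $\phi_A(K)\subseteq\overline{K'}\subsetneq K$). Everything else is the routine SVD estimate and a compactness/continuity wrap-up.
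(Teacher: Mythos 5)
Your route is genuinely different from the paper's and deserves comparison. The paper lifts $K,K'$ to multicones $\tilde K,\tilde K'$ in $\R^2$ and proves the single estimate $\norm{Aw}\geq c\norm{A}\norm{w}$ for all $w\in\tilde K'$ and all admissible $A$, which is then applied twice. The proof of that estimate is a short compactness argument: normalise $A$ by its operator norm, extract a limit $\hat A$ with a vector $w\in\overline{\tilde K'}$ in its kernel, then choose $u$ with $u,w-u\in\tilde K$ and $\hat A u\neq 0$; since $\hat A(w-u)=-\hat A u$ this produces a nonzero vector lying in both $\overline{\tilde K'}$ and $-\overline{\tilde K'}$, which is impossible for a multicone. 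No SVD, no squeezing lemmas, no case split on norms. Your approach replaces this with the SVD reduction $\norm{AB}\geq\norm{A}\norm{B}\,|\sin\angle([Bu_B^+],u_A^-)|$ (step (i), which is correct) and then tries to bound the angle by locating $[Bu_B^+]$ near $\overline{K'}$ and $u_A^-$ away from it. This buys a more explicit, quantitative picture at the cost of more bookkeeping, and it recycles Lemmas~\ref{contract} and \ref{pi lemma}, which the paper's proof of this lemma does not need.

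There is, however, a genuine gap in your steps (ii)--(iii), particularly in the bounded-norm case. The assertions ``$[Bu_B^+]\in\overline{K'}$'' and ``$u_A^-\notin\overline K$'' are not literally true: both hold only approximately and only once $\norm{A},\norm{B}$ are large (the squeezing arguments give $[Bu_B^+]$ within $O(\norm{B}^{-2})$ of $\overline{K'}$ and $u_A^-$ within $O(\norm{A}^{-2})$ of $\R\P^1\setminus K$). For bounded norm you fall back on ``the set of admissible $A$ with $\norm{A}\leq R$ is compact, and the infimal angle cannot be zero without violating $\phi_A(K)\subseteq\overline{K'}\subsetneq K$.'' That parenthetical is the load-bearing claim, and it is not justified: $u_A^-$ is the right singular vector of $A$ and need not coincide with the repelling fixed point of $\phi_A$, so the simple dynamical argument that forces the repeller out of $K$ does not immediately control $u_A^-$, and it is not evident that $[Bu_B^+]=u_A^-$ is incompatible with admissibility when the norms are moderate. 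Fortunately this can be repaired by a different device than compactness: for $\min(\norm{A},\norm{B})\leq R$ the desired inequality follows trivially from submultiplicativity, since $\norm{AB}\geq\norm{B}/\norm{A}\geq R^{-2}\norm{A}\norm{B}$ whenever $\norm{A}\leq R$ (and symmetrically in $B$); the angle argument is then needed only when both norms exceed $R$, which is precisely where your squeezing estimates apply. With that replacement your proof closes, but as written the compactness step is a gap.
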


\begin{proof}
If $\tilde{K}$ and $\tilde{K}'$ are multicones in $\R^2$ that correspond to $K$ and $K'$, respectively, then the closure of $\tilde{K}'$ is contained in $\tilde{K}$ and $A\tilde{K}\subseteq \tilde{K}'$. We begin by claiming that there exists $c>0$ such that
\[
\norm{Aw} \geq c\norm{A} \norm{w},
\]
for all $w \in \tilde{K}'$. That is,
\[
c\norm{A} \leq \frac{\norm{Aw}}{\norm{w}} \leq \norm{A}.
\]
Suppose such a constant $c>0$ does not exist. Then we can construct $A_n$ of norm 1 with $A_n \tilde{K} \subseteq \tilde{K}'$, and $w_n \in \tilde{K}'$ of norm 1 such that $\norm{A_nw_n}<1/n$. Thus looking along a subsequence we can find $A$ of norm 1 such that $A\tilde{K} \subseteq \tilde{K}'$ and $w \in \tilde{K}'$ such that $Aw=0$. 

Next choose $u \in \tilde{K}$ such that $Au \neq 0$ and $w-u \in \tilde{K}$, which is possible because $w \in \overline{\tilde{K}'} \subsetneq \tilde{K}$. In particular $A(w-u) \in \tilde{K}'$ since $A\tilde{K} \subseteq \tilde{K}'$ but also $A(w-u)=-Au \in -\tilde{K}'$, which is a contradiction since $\tilde{K}$ is a multicone.\\
Now, to prove the lemma suppose that $A, B$ map $\tilde{K}$ into $\tilde{K}'$ and let $w \in \tilde{K}'$ with norm 1. Then
\[
\norm{AB} \geq \frac{\norm{ABw}}{\norm{w}} \geq c\norm{A}\norm{Bw} \geq c^2 \norm{A}\norm{B}\norm{w}=c^2\norm{A}\norm{B}.\qedhere
\]
\end{proof}

Lemma \ref{almost} will enable us to prove the aforementioned continuity theorem for the critical exponent, which is our first step towards proving Theorem \ref{cty}. Recall that for a set $\mathcal{B}\subseteq \SL$, we denote by $\delta_\mathcal{B}$ the critical exponent of the zeta function of $\mathcal{B}$, and by $s_\mathcal{B}$ the root of the pressure function of $\mathcal{B}$.

\begin{thm}\label{cty_str}
Let $(\A_n)$ be a sequence of finite, semidiscrete subsets of $\SL$ that converges in the Hausdorff metric to a finite set $\A\subseteq \SL$. Suppose that $\A$ is either:
\begin{enumerate}
\item semidiscrete and irreducible; or
\item uniformly hyperbolic and reducible.
\end{enumerate}
Then,
\begin{align*}
&\delta_{\A_n}\xrightarrow{n\to\infty}\delta_\A &\text{and}& &s_{\A_n}\xrightarrow{n\to\infty} s_\A.
\end{align*}
\end{thm}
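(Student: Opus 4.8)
The plan is to derive both convergences from the one‑sided estimates $\liminf_n s_{\A_n}\geq\delta_\A$ and $\limsup_n\delta_{\A_n}\leq\delta_\A$, using the general bound $s_\mathcal B\leq\delta_\mathcal B$ and the identity $\delta_\A=s_\A$ (Theorem~\ref{s=d} in case~(1); in case~(2) the pressure of a uniformly hyperbolic set is strictly decreasing, so has a unique root). Indeed these give $\delta_\A\geq\limsup_n\delta_{\A_n}\geq\liminf_n\delta_{\A_n}\geq\liminf_n s_{\A_n}\geq\delta_\A$, hence $\delta_{\A_n}\to\delta_\A$, and then $\delta_\A\leq\liminf_n s_{\A_n}\leq\limsup_n s_{\A_n}\leq\limsup_n\delta_{\A_n}=\delta_\A$, hence $s_{\A_n}\to s_\A$.

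The lower estimate is soft. For finite $\mathcal B=\{B_i\}_{i\in\I}$, submultiplicativity of the norm yields $P_\mathcal B(s)\geq\big(\sum_{\i\in\I^L}\norm{B_\i}^{-2s}\big)^{1/L}$ for every $L$ and $s\geq0$. If $s<\delta_\A=s_\A$ then $P_\A(s)>1$, so one may fix $L$ with $\big(\sum_{\i\in\I^L}\norm{A_\i}^{-2s}\big)^{1/L}>1$; since $\A_n\to\A$ the finitely many norms $\norm{A_\i^{(n)}}$, $\i\in\I^L$, converge to $\norm{A_\i}$, so the same holds for $\A_n$ once $n$ is large, forcing $P_{\A_n}(s)>1$ and $s_{\A_n}\geq s$. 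Letting $s\uparrow\delta_\A$ (interpreted as ``$s$ arbitrarily large'' when $\delta_\A=\infty$) gives $\liminf_n s_{\A_n}\geq\delta_\A$.

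For the upper estimate the engine is a uniform‑in‑$n$ supermultiplicativity ``up to a fixed constant'', supplied by Lemma~\ref{almost}. In case~(2), $\A$ maps a multicone $M$ compactly inside itself — a condition on the finitely many generators — so for $n$ large $\A_n$ maps $M$ compactly into a fixed $M'$ with $\overline{M'}\subsetneq M$, hence so does every product $A_\i^{(n)}$; Lemma~\ref{almost} gives $c=c(M,M')\in(0,1]$ independent of $n$ with $\norm{AB}\geq c\norm A\norm B$ for all such products. Grouping a length‑$m$ word into blocks of length $L$ and letting $m\to\infty$ then gives $P_{\A_n}(s)\leq c^{-2s/L}\big(\sum_{\j\in\I^L}\norm{A_\j^{(n)}}^{-2s}\big)^{1/L}$, while submultiplicativity gives the matching $\big(\sum_{\j\in\I^L}\norm{A_\j}^{-2s}\big)^{1/L}\leq P_\A(s)$; so for $s>\delta_\A=s_\A$ (where $P_\A(s)<1$), choosing $L$ with $c^{-2s/L}P_\A(s)<1$ and letting $n\to\infty$ in the finite length‑$L$ sum forces $P_{\A_n}(s)<1$, hence $\delta_{\A_n}\leq s$, hence $\limsup_n\delta_{\A_n}\leq\delta_\A$. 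In case~(1), $\A$ contracts no multicone, so one uses instead the uniformly hyperbolic subsystem of Proposition~\ref{uh-sub}: a word $A_0=A_w\in\A^k$ and intervals $U'\subsetneq U$, $V$ with $\overline{U'}\subsetneq U$, $\overline U\cap\overline V=\emptyset$, $\phi_B(\overline U)\subseteq\R\P^1\setminus V$ for all $B\in\A^*$, and $\phi_{A_0}(\R\P^1\setminus V)\subseteq U'$. Setting $A_0^{(n)}:=A_w^{(n)}$ and $\Gamma_n:=\{A_0^{(n)}B:B\in\A_n^*\}$, one first records that $\delta_\A=\delta_\Gamma$ and (by the same bookkeeping) $\delta_{\A_n}=\delta_{\Gamma_n}$, via the $\A_\infty$/$\A_N$ comparison from the proof of Theorem~\ref{s=d}. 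The key geometric input is that the data $U,U',V$ keeps working for $\A_n$ when $n$ is large: the single‑map inclusion $\phi_{A_0^{(n)}}(\R\P^1\setminus V)\subseteq U'$ is robust, and I claim that $\phi_B(\overline U)\subseteq\R\P^1\setminus V$ for every $B\in\A_n^*$ once $n$ is large. Granting this, Lemma~\ref{almost} makes $\Gamma_n$ uniformly hyperbolic with constants depending only on $U,U'$, and running the case‑(2) estimate on the finite truncations $\Gamma_n^{(M)}:=\{A_0^{(n)}B:B\in\bigcup_{\ell\leq M}\A_n^\ell\}$ (which converge termwise to $\Gamma^{(M)}$) together with a tail estimate uniform in $M$ — bounding $\sum_{B\in\A_n^*}\norm{A_0^{(n)}B}^{-2s}$ uniformly in $n$, using semidiscreteness of the $\A_n$ and the contraction estimates of Lemmas~\ref{contract} and~\ref{key bound} to absorb the slowly growing ``parabolic‑type'' terms — gives $P_{\Gamma_n}(s)<1$ for $s>\delta_\A$, whence $\delta_{\A_n}=\delta_{\Gamma_n}\leq s$ and $\limsup_n\delta_{\A_n}\leq\delta_\A$.

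The main obstacle is exactly the claimed robustness ``$\phi_B(\overline U)\subseteq\R\P^1\setminus V$ for all $B\in\A_n^*$'' (and the allied uniform tail estimate). For $B$ of bounded word length it is routine: a failure yields $B_n\in\A_n^*$ of bounded length and $x_n\in\overline U$ with $\phi_{B_n}(x_n)\to z\in\overline V$; along a subsequence the word stabilises, so $B_n\to B\in\A^*$ and $z\in\phi_B(\overline U)\cap\overline V$, contradicting $\phi_B(\overline U)\cap R_\A=\emptyset$ — which holds because $\overline U\cap R_\A=\emptyset$, $R_\A$ is backward invariant, and $\overline U\subseteq\R\P^1\setminus\overline V$. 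The case $|B_n|\to\infty$ is the genuinely delicate one: here one must use that every $\A_n$ is semidiscrete, so that $\phi_{B_n}$ contracts away from the repeller of $\A_n$ and $\phi_{B_n}(\overline U)$ cannot spread from the fixed compact set $\overline U$ (disjoint from $R_\A$) into the fixed neighbourhood $V$ of a piece of $R_\A$. Controlling this uniformly is the technical heart of the argument, and it is where semidiscreteness of the whole sequence $(\A_n)$ — not merely of the limit — is used essentially; the remaining pieces (the soft lower bound, the block/pressure comparison, and the identities $\delta_\Gamma=\delta_\A$, $\delta_{\Gamma_n}=\delta_{\A_n}$) are comparatively routine given Theorem~\ref{s=d} and the constructions of Section~\ref{irred}.
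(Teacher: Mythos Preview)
Your outline is broadly the paper's own: reduce to the uniformly hyperbolic subsystem furnished by Proposition~\ref{uh-sub}, get the lower bound from submultiplicativity (pressure as a supremum), and get the upper bound from Lemma~\ref{almost} (pressure as an infimum). The lower estimate and case~(2) are fine and match the paper.

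The real gap is exactly the point you flag but do not prove: that the geometric data $(U,U',V)$ from Proposition~\ref{uh-sub} continues to serve $\A_n$ for large $n$, i.e.\ that $\phi_B(\overline U)\subseteq\R\P^1\setminus V$ for \emph{every} $B\in\A_n^*$. Your suggested mechanism --- ``$\A_n$ is semidiscrete, so $\phi_{B_n}$ contracts away from the repeller of $\A_n$'' --- is not an argument: you have no control on where $R_{\A_n}$ sits relative to $\overline U$, and no quantitative contraction uniform in $n$ is provided. The paper closes this gap differently and more directly. It argues that $\overline U\cap R_{\A_n}=\emptyset$ for all large $n$: if not, one finds repelling (or parabolic) fixed points of words in $\A_{n_k}^*$ accumulating in $\overline U$, and since $\A_{n_k}\to\A$, these fixed points limit onto points of $R_\A$, contradicting $\overline U\cap R_\A=\emptyset$. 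Once $\overline U\cap R_{\A_n}=\emptyset$, the forward/backward invariance argument from the proof of Proposition~\ref{uh-sub} runs verbatim for $\A_n$ and gives the desired inclusion for \emph{all} $B\in\A_n^*$ simultaneously --- no splitting into bounded versus unbounded word length is needed.

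A second, related issue is your ``tail estimate uniform in $M$'' in case~(1). The paper does not need any such estimate. Once the common pair $(U,U')$ works for all $\A_{\infty,n}$ with $n$ large, Lemma~\ref{almost} gives a constant $c>0$ \emph{independent of $n$} such that $\norm{AB}\geq c\norm A\norm B$ for all $A,B$ in $\bigcup_{n\geq N}\A_{\infty,n}^*$; hence $P_{\A_{k,n}}(s)=\inf_l\big(c^{-2s/3}Z_l(s,\A_{k,n})\big)^{1/l}$, and upper semicontinuity in $n$ follows immediately from continuity of each finite $Z_l$. Combined with $P_{\A_{k,n}}(s)\to P_{\A_{\infty,n}}(s)$ as $k\to\infty$ (the $\A_\infty/\A_N$ comparison you already cite), this gives $P_{\A_{\infty,n}}(s)\to P_{\A_\infty}(s)$ and hence $s_{\A_n}\to s_\A$ without any tail bound. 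Your route via ``absorbing parabolic-type terms with Lemmas~\ref{contract} and~\ref{key bound}'' is both unnecessary and not made precise.

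Minor point: in your bounded-word-length paragraph, the contradiction you want is with $\phi_B(\overline U)\subseteq\R\P^1\setminus V$ from Proposition~\ref{uh-sub}(1), not with ``$\phi_B(\overline U)\cap R_\A=\emptyset$'', since $z\in\overline V$ does not place $z$ in $R_\A$.
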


\begin{proof} Note that due to Theorem~\ref{s=d} and \cite[Theorem 2]{deleo1}, $s_{\A_n}=\delta_{\A_n}$ and $s_\A=\delta_\A$. So it suffices to prove the result for $s_{\A_n}$ and $s_\A$. We begin by assuming that $\A$ is finite, semidiscrete and irreducible. Thus Proposition \ref{uh-sub} is applicable and yields a matrix $A_0 \in \A^{n_0}$, for some $n_0\in\N$, and open intervals $U, U' \subsetneq \R\P^1$ with $\overline{U'} \subsetneq U$ and $\overline{U}\cap R_\A=\emptyset$, such that $\phi_{A_0}(\overline{U}) \subsetneq U'$. Without loss of generality we can assume that $n_0=1$ and so $A_0\in\A$. In particular, if we write $\A=\{A_1,A_2,\dots, A_M\}$, we assume that $A_0=A_1$.\\
Recall the set 
\[
\A_\infty=\{A_1A\colon A\in (\A\setminus\{A_1\})^*\}\cup\{A_1\},
\]
which was used in the proof of Theorem~\ref{s=d}. $\A_\infty$ is uniformly hyperbolic and satisfies $s_{\A_\infty}=s_\A$. By choosing $n$ large enough we may assume that $\A$ and $\A_n$ have the same cardinality, for all $n$. So if we write $\A_n=\{A_{1,n}, A_{2,n}, \dots A_{M,n}\}$, then $A_{i,n}$ converges to $A_i$, as $n\to\infty$, for $i=1,2,\dots,M$. Thus, we can also define
\[
\A_{\infty,n}=\{A_{1,n}A\colon A\in (\A_n\setminus\{A_{1,n}\})^*\}\cup\{A_{1,n}\}.
\]
We first claim that $\A_{\infty,n}$ is uniformly hyperbolic, for all $n$ large enough. Let $R_n$ be the repeller of $\A_n$. If there exists a subsequence $(\A_{n_k})$ of $(\A_n)$ such that $\overline{U}\cap R_n\neq \emptyset$ for all $n$, then since the repeller is the closure of the repelling fixed points of hyperbolic transformations in $\Phi_{\A_n}$ and unique fixed points of parabolic transformations in $\Phi_{\A_n}$, the convergence of $\A_n$ to $\A$ implies that $\overline{U}\cap R_\A\neq \emptyset$, which is a contradiction.\\
Thus, $\overline{U}\cap R_n=\emptyset$ for all $n$ large enough. So, for a fixed $\epsilon>0$ small enough, there exists $N\in \N$, such that $\overline{\phi_A(U)}\subsetneq U_\epsilon'$, for all $A\in \A_n^*$ and all $n\geq N$, where $U_\epsilon'$ is an open $\epsilon$-neighbourhood of $U'$ in $\R\P^1$, compactly contained in $U$. Applying Theorem~\ref{abybv} concludes the proof of our claim.\\
Working similarly as we did in the proof of Theorem~\ref{s=d}, we can show that $s_{\A_{\infty,n}}=s_{\A_n}$. Hence it suffices to prove that $s_{\A_{\infty,n}}\xrightarrow{n\to\infty}s_{\A_\infty}$.\\
We prove that for all $s>0$,
\begin{equation} \label{limit}
\lim_{n \to \infty}P_{\A_{\infty,n}}(s) = P_{\A_\infty}(s).
\end{equation}
Let us first define the sets
\[
\A_{k,n}\vcentcolon= \A_{\infty,n}\bigcap \left(\bigcup_{i=1}^k\A_n^i\right),
\]
for all $n\in\N$. That is, $\A_{k,n}$ is the set of all words in $\A_{\infty,n}$ that have length at most $k$ in $\A_n$. Similarly, for any $k\in\N$ we define
\[
\mathcal{B}_k=\A_\infty\bigcap \left(\bigcup_{i=1}^k\A^i\right),
\]
which is the the set of all words in $\A_\infty$ that have length at most $k$ in $\A$. In particular $\A_{k,n} \xrightarrow{n\to\infty} \mathcal{B}_k$ in the Hausdorff metric. In the proof of Theorem~\ref{s=d} we proved that
\begin{align*}
&\lim_{k\to\infty} P_{\mathcal{B}_k}(s)=P_{\A_\infty}(s) &\text{and}& &\lim_{k\to\infty} P_{\A_{k,n}}(s)=P_{\A_{\infty,n}}(s).
\end{align*}
So, in order to prove \eqref{limit}, it suffices to prove that 
\begin{equation}\label{fin}
\lim_{n\to\infty}P_{\A_{k,n}}(s)=P_{\mathcal{B}_k}(s),\quad \text{for all} \quad k\in\N.
\end{equation}
Fix $k\in\N$ and note that for each $n$ there exists a set of words $\I_{k,n}$ in the alphabet $\{1,2,\dots,M\}$, such that $\A_{k,n}=\{A_\i\colon \i\in \I_{k,n}\}$. The set $\I_{k,n}$ has to be finite, and so the function
\[
\mathcal{C} \mapsto \left(\sum_{C\in\mathcal{C}_{k,n}^l}\norm{C}^{-2s}\right)^\frac{1}{l},
\]
where $\mathcal{C}_{k,n}=\{C_\i\in\mathcal{C}^*\colon \i\in \I_{k,n}\}$, is continuous in the Hausdorff topology of $\SL$. Thus, if we define
\[
Z_l(s,\mathcal{C}_{k,n})\vcentcolon =\sum_{C\in\mathcal{C}_{k,n}^l}\norm{C}^{-2s},
\]
the functions $\mathcal{C}\mapsto \sup_l\left(Z_l(s,\mathcal{C}_{k,n})\right)^\frac{1}{l}$ are lower semicontinuous, for every $n$. Hence, we have that
\[
\liminf_{n \to \infty} P_{\A_{k,n}}(s)=\liminf_{n \to \infty} \sup_l \left(Z_l(s,\A_{k,n})\right)^\frac{1}{l} \geq \sup_l \left(\sum_{B\in\mathcal{B}_k^l}\norm{B}^{-2s}\right)^\frac{1}{l}=P_{\mathcal{B}_k}(s).
\]
So it remains to show that $\limsup_{n \to \infty}P_{\A_{k,n}}(s) \leq P_{\mathcal{B}_k}(s)$. Due to our claim, Lemma \ref{almost} is applicable and yields a constant $c>0$ such that for any $A, B \in \bigcup_{n \geq N} \A_{\infty,n}^*$,
\begin{equation} \label{am}
\norm{AB} \geq c\norm{A} \norm{B}.
\end{equation}
It follows that for any $n\geq N$, and all $l,m\in\N$
\[
Z_{l+m}(s, \A_{k,n}) \leq  c^{-2s}Z_l(s, \A_{k,n})Z_m(s, \A_{k,n}).
\]
Therefore, $P_{\A_{k,n}}(s)=\lim_{l \to \infty} \left(c^{\frac{-2s}{3}}Z_l(s, \A_{k,n})\right)^\frac{1}{l}= \inf_l \left(c^{\frac{-2s}{3}}Z_l(s, \A_{k,n})\right)^{\frac{1}{l}}$ by Fekete's theorem for submultiplicative sequences. Inequality \eqref{am} also holds for any $A,B\in\A_\infty$, and so
\[
P_{\mathcal{B}_k}(s)=\inf_l \left(\sum_{B\in\mathcal{B}_k^l}\norm{A}^{-2s}\right)^\frac{1}{l}.
\]
Thus we obtain that
\[
\limsup_{n \to \infty} P_{\A_{k,n}}(s)=\limsup_{n \to \infty} \inf_l \left(c^{\frac{-2s}{3}}Z_l(s, \A_{k,n})\right)^{\frac{1}{l}} \leq \inf_l \left(\sum_{B\in\mathcal{B}_k^l}\norm{B}^{-2s}\right)^\frac{1}{l}=P_{\mathcal{B}}(s),
\]
since $\mathcal{C} \mapsto \inf_l \left(Z_l(s, \mathcal{C}_{k,n})\right)^\frac{1}{l}$ are upper semicontinuous functions for any $n$. This proves \eqref{fin} and so equation \eqref{limit} is proved.\\
To conclude the proof of the theorem, take any $\epsilon>0$. Due to \eqref{limit} we can choose $N$ sufficiently large that for all $n \geq N$, 
\begin{align*}
&P_{\A_{\infty,n}}(s_{\A_\infty}-\epsilon)>1 &\text{and}& &P_{\A_{\infty,n}}(s_{\A_\infty}+\epsilon)<1,
\end{align*}
implying that 
\[
s_{\A_\infty}-\epsilon<s_{\A_{\infty,n}}<s_{\A_\infty}+\epsilon,
\] 
as required.

Next, we assume that $\A$ is uniformly hyperbolic and reducible. In this case the proof is just a simplified version of the arguments that we used above. The fact that 
\[
\liminf_{n \to \infty} P_{\A_n}(s) \geq P_\A(s),
\]
follows from the submultiplicativity of the matrix norm, as above. On the other hand, by uniform hyperbolicity and reducibility of $\A$ there exist cones $K, K' \subsetneq \R\P^1$ with $\overline{K'} \subsetneq K$ such that $\Phi_\A(K) \subseteq K'$. By Lemma \ref{almost} there exists $c>0$ which depends only on $K, K'$ such that for all $A, B \in \SL$ with $\phi_A(K) \subseteq K'$ and $\phi_B(K) \subseteq K'$, 
\[
\norm{AB} \geq c \norm{A}\norm{B}.
\]
Using this we can follow an analogous argument to the above to deduce that 
\[
\limsup_{n \to \infty} P_{\A_n}(s) \leq P_\A(s).
\]
In particular, $\lim_{n \to \infty}s_{\A_n} = s_\A$, completing the proof.
\end{proof}

In view of Theorem \ref{cty_str}, in order to prove Theorem \ref{cty} it remains to consider the case where the limit point $\A$ is reducible and semidiscrete but not uniformly hyperbolic and $K_\A$ is not a singleton. Then, as mentioned in Section~\ref{eg}, $\A$ must necessarily fall into one of the two following categories:
\begin{enumerate}
\item either there exists a hyperbolic matrix $A$ and a parabolic matrix $B$ in $\A$, such that $\phi_B$ fixes the repelling fixed point of $\phi_A$; 
\item or there exist two hyperbolic matrices $A$ and $B$ in $\A$, such that the attracting fixed point of $\phi_A$ is the repelling fixed point of $\phi_B$.
\end{enumerate}

In Section~\ref{eg} we showed that in both of the above categories $K_\A$ contains an interval, and so $\hd K_\A=1$. We are going to show that $K_{\A_n}$ contains an interval for all $n$ large enough, thus concluding the proof of Theorem~\ref{cty}. We focus on category (1); the proof for category (2) is similar.\\

Suppose that $A_n,B_n\in \A_n$ are such that $A_n\to A$ and $B_n\to B$, as $n\to\infty$. Note that the matrices $B_n$ are either hyperbolic or parabolic, while $A_n$ have to be hyperbolic for $n$ large enough. We assume that $B_n$ is parabolic for all $n$; our arguments can be easily modified to work for the case where $B_n$ is hyperbolic.

Let $a(A_n)$ and $r(A_n)$ be the attracting and the repelling fixed points of $\phi_{A_n}$, respectively. Similarly, let $p(B_n)$ be the unique fixed point of $\phi_{B_n}$. If $p(B_n)=r(A_n)$ for all $n$ large enough, then the attractor of $\{\phi_{A_n},\phi_{B_n}\}$ is an interval and we have nothing to prove. So, assume that $p(B_n)$ and $r(A_n)$ are distinct along a subsequence. By relabelling this subsequence, we assume that $p(B_n)\neq r(A_n)$ for all $n$.

Let us recall the following result from \cite[Theorem~5.23]{thesis}.
\begin{thm}\label{principal}
Let $(\A_n)$ be a sequence of semidiscrete subsets of $\SL$ that converges to a finite and semidiscrete set $\A$. If $\Phi_\A$ maps an open interval of $\R\P^1$ strictly inside itself, then for all $n$ large enough $\Phi_{\A_n}$ also maps an open interval strictly inside itself.
\end{thm}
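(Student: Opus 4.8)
The plan is to treat separately the two ways in which $\Phi_\A$ can map an interval $I=(p,q)$ strictly inside itself: \emph{compactly} (i.e.\ $\Phi_\A(\overline I)\subsetneq I$) and \emph{strictly but not compactly} (i.e.\ $\overline{\Phi_\A(I)}$ meets $\partial I$). The first is routine; the second is the crux. Throughout, after discarding finitely many terms we may assume $\#\A_n=\#\A=N$ and label the sets so that $\A=\{A_1,\dots,A_N\}$, $\A_n=\{A_{1,n},\dots,A_{N,n}\}$ with $A_{i,n}\to A_i$. Since each $\phi_A$ is an orientation-preserving circle homeomorphism with $\phi_A(\overline I)\subseteq\overline I$, its image $\phi_A(\overline I)$ is the sub-arc of $\overline I$ joining $\phi_A(p)$ to $\phi_A(q)$, and $\{\phi_A(p),\phi_A(q)\}=\{p,q\}$ forces $\phi_A(I)=I$; hence $\overline{\Phi_\A(I)}$ touches $p$ (resp.\ $q$) exactly when some $\phi_{A_i}$ fixes $p$ (resp.\ $q$).

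\textbf{Compact case.} If $\Phi_\A(\overline I)\subsetneq I$, then $\Phi_\A(\overline I)$ is a compact subset of the open interval $I$, hence separated from $\partial I$. Since $M\mapsto\phi_M(\overline I)$ is continuous from $\SL$ into the compact subsets of $\R\P^1$ with the Hausdorff metric and $\Phi_{\A_n}(\overline I)=\bigcup_i\phi_{A_{i,n}}(\overline I)\to\Phi_\A(\overline I)$, one gets $\Phi_{\A_n}(\overline I)\subseteq I$, and a fortiori $\Phi_{\A_n}(I)\subsetneq I$, for all large $n$. Semidiscreteness plays no role here.

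\textbf{Strict non-compact case.} Suppose $\overline{\Phi_\A(I)}$ touches an endpoint, say $p$, so $\A^{(p)}\vcentcolon=\{A_i:\phi_{A_i}(p)=p\}\neq\emptyset$ (a simultaneous touch at $q$ is symmetric). Then $\A^{(p)}\cap\A^{(q)}=\emptyset$, so the maps in $\A\setminus(\A^{(p)}\cup\A^{(q)})$ map $\overline I$ \emph{compactly} into $I$ and, by the compact-case argument, keep doing so after small perturbations of both matrices and interval. The obstruction is the maps in $\A^{(p)}$ (and $\A^{(q)}$), and semidiscreteness enters here: each such $\phi_{A_i}$ is non-elliptic, and from $\phi_{A_i}$-invariance of $\overline I$ one checks that either $\phi_{A_i}$ is hyperbolic with $p$ its repelling fixed point and attracting fixed point inside $I$, or $\phi_{A_i}$ contracts $(p,q)$ monotonically toward $p$ (hyperbolic with $p$ attracting, or parabolic fixing $p$); in every case $\phi_{A_i}(q)<q$. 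In the second type $p\in K_\A$, so $K_{\A_n}$ does contain points converging to $p$. The plan is then to build, for large $n$, an interval $I_n$ close to $I$ with $\Phi_{\A_n}(I_n)\subsetneq I_n$, by moving the touched endpoint $p$ to a nearby fixed point of the perturbed maps: the perturbed repelling fixed point $r(A_{i,n})\to p$ when $\phi_{A_i}$ is of the first type, and the perturbed attracting (or parabolic) fixed point when it is of the second type. The different maps in $\A^{(p)}$ impose competing constraints on the perturbed endpoint $p_n$ — it must not lie on the far side of any perturbed repelling fixed point of a first-type map, nor on the near side of any perturbed attracting/parabolic fixed point of a second-type map — and the heart of the proof is that these constraints are jointly satisfiable once $n$ is large. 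This is an ordering statement for the perturbed fixed points accumulating at $p$, and it is exactly here that one invokes the structure theory of semidiscrete sets of Jacques and Short \cite{jasho}: a violation of the required ordering would, on composing the offending perturbed maps, produce elements of $\A_n^*$ approaching the identity, contradicting semidiscreteness of $\A_n$. With the ordering in hand, $p_n$ (and, if $q$ is also touched, $q_n$ by the symmetric argument) can be selected, and $\Phi_{\A_n}(I_n)\subsetneq I_n$ is then checked map by map as above.

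\textbf{Main obstacle.} The delicate point, and the only place requiring the deeper semidiscrete machinery, is this simultaneous choice of the perturbed endpoint(s): balancing the ``repelling-type'' maps in $\A^{(p)}$, which constrain $I_n$ not to over-reach their perturbed repelling fixed points, against the ``attracting/parabolic-type'' maps, which constrain $I_n$ not to over-reach their perturbed attracting (or parabolic) fixed points. Everything else is either the elementary perturbation of the compact case or a routine verification.
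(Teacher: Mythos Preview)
The paper does not prove this theorem at all: it is quoted verbatim as ``the following result from \cite[Theorem~5.23]{thesis}'' and used as a black box in the proof of Theorem~\ref{cty}. There is therefore no ``paper's own proof'' to compare your proposal against.

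That said, your outline is plausible and has the right shape. The compact case is indeed routine, and your classification of the maps in $\A^{(p)}$ into repelling-type and attracting/parabolic-type is correct (using that $\A$ is semidiscrete, hence contains no elliptics, and that no $\phi_{A_i}$ can fix both endpoints since $\Phi_\A(I)\subsetneq I$). Where your sketch remains genuinely incomplete is precisely the step you flag as the main obstacle: the claim that a violation of the required ordering of perturbed fixed points ``would, on composing the offending perturbed maps, produce elements of $\A_n^*$ approaching the identity, contradicting semidiscreteness of $\A_n$'' is not an argument as written. Semidiscreteness of a \emph{single} $\A_n$ says $\mathrm{Id}\notin\overline{\A_n^*}$; it says nothing about a sequence indexed by $n$ drifting toward the identity across different semigroups. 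You would need to show, for each large $n$ separately, that the fixed-point configuration forced by semidiscreteness of $\A_n$ permits the choice of $p_n$, and this requires a more careful analysis (for instance, if $\A^{(p)}$ contains both a repelling-type map $A_i$ and an attracting-type map $A_j$, one must rule out that the perturbed repelling fixed point of $A_{i,n}$ lies strictly to the $I$-side of the perturbed attracting fixed point of $A_{j,n}$, and the mechanism for this uses the dynamics of the composite $A_{j,n}A_{i,n}$ or similar within the single semigroup $\A_n^*$). The structure theory in \cite{jasho} that you invoke does supply tools for exactly this kind of argument, but the sketch does not indicate which result is being used or how.
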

Theorem~\ref{principal} implies that, for all $n$ large enough, the projective IFS $\{\phi_{A_n},\phi_{B_n}\}$ maps an open interval $I_n\subsetneq \R\P^1$ strictly inside itself. By conjugating we can assume that $\overline{I_n}\subsetneq (0,\pi)$. Then $a(A_n)\in I_n$ and $p(B_n)\in\partial I_n$, whereas $r(A_n)$ lies in the complement of $\overline{I_n}$. So $I_n$ can be chosen to be the interval with endpoints $a(A_n)$ and $p(B_n)$ that does not contain $r(A_n)$. Without loss of generality, we assume that $a(A_n)<p(B_n)<r(A_n)$ for all $n$ large enough, i.e. $I_n=(a(A_n),p(B_n))$.

We are going to show that $K_{\A_n}$ contains $I_n$ for all $n$ large enough, which yields the desired result. It suffices to show that the attractor of $\{\phi_{A_n},\phi_{B_n}\}$ contains $I_n$ for all $n$ large enough.

Observe that $a(A_n),r(A_n)$ converge to the attracting and repelling fixed points of $A$, respectively, while $p(B_n)$ converges to the unique fixed point of $B$, which, by assumption, coincides with the repelling fixed point of $A$. This along with the fact that $\norm{A_n}$ and $\norm{B_n}$ are bounded, yields that $\phi_{B_n}(a(A_n))<\phi_{A_n}(p(B_n))$, for all $n$ large enough. Hence $\phi_{A_n}(I_n)\cup\phi_{B_n}(I_n)=I_n$, for all $n$ large enough. Fix such an $n$. So for every $x\in I_n$ there exists $C\in\{A_n,B_n\}$ such that $x\in\phi_C(I_n)$ and we can then recursively find a sequence $(C_k)\subseteq \{A_n,B_n\}$ so that $x\in \phi_{C_1C_2\cdots C_k}(I_n)$, for all $k\in\N$. It is now easy to check that the intervals $\phi_{C_1C_2\cdots C_k}(I_n)$ are nested and their diameter goes to 0 as $k\to\infty$. Therefore, every $x\in I_n$ is an accumulation point of either $a(A_n)$ or $p(B_n)$ under the semigroup generated by $\{\phi_{A_n},\phi_{B_n}\}$, which by the second part of Theorem~\ref{dense} yields that every $x\in I_n$ lies in the attractor of $\{\phi_{A_n},\phi_{B_n}\}$. This completes the proof of Theorem \ref{cty}.

\begin{rem}\label{discont}
It is natural to ask whether we could have a continuity theorem more general than Theorem \ref{cty}. However, the assumptions on the limit $\A$ of the sequence $\A_n$ in the statement of Theorem \ref{cty} cannot be significantly weakened due to the following:

It easy to show that the Hausdorff dimension can be discontinuous when the limit $\A$ generates a non-free semigroup or $K_\A$ is a singleton. Let us show that it is also necessary to assume that $\A$ is semidiscrete. Let $\A_n=\{A_n,B_n,C\}$ be irreducible and Diophantine subsets of $\SL$ such that $\Phi_{\A_n}$ maps an open interval $I$ compactly inside itself, for all $n$. Assume that $\lim_{n\to\infty} A_n=\lim_{n\to\infty} B_n=\mathrm{Id}$. Then $\A_n$ converges to $\A=\{\mathrm{Id}, \mathrm{Id}, C\}$, which is not semidiscrete and $\hd K_\A=0$. Then, $\phi_{A_n}(I)\cup \phi_{B_n}(I)=I$ for all $n$ large enough, and so $\hd K_{\A_n}=1$ for all $n$ large enough, which implies that the dimension is discontinuous at $\A$.
\end{rem}

\noindent \textbf{Acknowledgements.}  Both authors were financially supported by the \emph{Leverhulme Trust} (Research Project Grant number RPG-2016-194). The second author was also financially supported by the \emph{EPSRC} (Standard Grant EP/R015104/1).

\begin{bibdiv}
\begin{biblist}

\bib{aby}{article}{
   author={Avila, Artur},
   author={Bochi, Jairo},
   author={Yoccoz, Jean-Christophe},
   title={Uniformly hyperbolic finite-valued $\mathrm{SL}(2,\R)$-cocycles},
   journal={Comment. Math. Helv.},
   volume={85},
   date={2010},
   number={4},
   pages={813--884}
}

\bib{bakamo}{article}{
   author={B\'{a}r\'{a}ny, Bal\'{a}zs},
   author={K\"aenm\"aki, Antti},
   author={Morris, Ian D.},
   title={Domination, almost additivity and thermodynamical formalism for planar matrix cocycles},
   journal={to appear in Israel J. Math.}
   eprint={https://arxiv.org/abs/1802.01916}
}

\bib{babeca}{article}{
   author={B\'{a}r\'{a}ny, I.},
   author={Beardon, A. F.},
   author={Carne, T. K.},
   title={Barycentric subdivision of triangles and semigroups of M\"{o}bius
   maps},
   journal={Mathematika},
   volume={43},
   date={1996},
   number={1},
   pages={165--171}
}

\bib{bv}{article}{
   author={Barnsley, Michael F.},
   author={Vince, Andrew},
   title={Real projective iterated function systems},
   journal={J. Geom. Anal.},
   volume={22},
   date={2012},
   number={4},
   pages={1137--1172}
}

\bib{bogo}{article}{
   author={Bochi, Jairo},
   author={Gourmelon, Nicolas},
   title={Some characterizations of domination},
   journal={Math. Z.},
   volume={263},
   date={2009},
   number={1},
   pages={221--231}
}

\bib{thesis}{article}{
	author={Christodoulou, Argyrios},
	title={Dynamics of holomorphic function in the hyperbolic plane},
	journal={PhD Thesis, The Open University},
	date={2020},
	eprint={http://oro.open.ac.uk/70075/}
}

\bib{deleo1}{article}{
   author={De Leo, Roberto},
   title={On the exponential growth of norms in semigroups of linear
   endomorphisms and the Hausdorff dimension of attractors of projective
   iterated function systems},
   journal={J. Geom. Anal.},
   volume={25},
   date={2015},
   number={3},
   pages={1798--1827}
}

\bib{deleo2}{article}{
   author={De Leo, Roberto},
   title={A conjecture on the Hausdorff dimension of attractors of real
   self-projective iterated function systems},
   journal={Exp. Math.},
   volume={24},
   date={2015},
   number={3},
   pages={270--288}
}

\bib{frmast}{article}{
   author={Fried, David},
   author={Marotta, Sebastian M.},
   author={Stankewitz, Rich},
   title={Complex dynamics of M\"{o}bius semigroups},
   journal={Ergodic Theory Dynam. Systems},
   volume={32},
   date={2012},
   number={6},
   pages={1889--1929}
}

\bib{fu}{article}{
   author={Furstenberg, Harry},
   title={Noncommuting random products},
   journal={Trans. Amer. Math. Soc.},
   volume={108},
   date={1963},
   pages={377--428}
}

\bib{hima}{article}{
   author={Hinkkanen, A.},
   author={Martin, G. J.},
   title={The dynamics of semigroups of rational functions. I},
   journal={Proc. London Math. Soc. (3)},
   volume={73},
   date={1996},
   number={2},
   pages={358--384}
}

\bib{hochman-annals}{article}{
   author={Hochman, Michael},
   title={On self-similar sets with overlaps and inverse theorems for
   entropy},
   journal={Ann. of Math. (2)},
   volume={180},
   date={2014},
   number={2},
   pages={773--822}
}

\bib{hochman_sol}{article}{
   author={Hochman, Michael},
   author={Solomyak, Boris},
   title={On the dimension of Furstenberg measure for $SL_2(\Bbb R)$ random
   matrix products},
   journal={Invent. Math.},
   volume={210},
   date={2017},
   number={3},
   pages={815--875}
}

\bib{rap}{article}{
   author={Hochman, Michael},
   author={Rapaport, Ariel},
   title={Hausdorff Dimension of Planar Self-Affine Sets and Measures with Overlaps},
   eprint={https://arxiv.org/abs/1904.09812}
}

\bib{hutch}{article}{
   author={Hutchinson, John E.},
   title={Fractals and self-similarity},
   journal={Indiana Univ. Math. J.},
   volume={30},
   date={1981},
   number={5},
   pages={713--747}
}

\bib{jasho}{article}{
   author={Jacques, Matthew},
   author={Short, Ian},
   title={Dynamics of hyperbolic isometries},
   eprint={https://arxiv.org/abs/1609.00576v4}
}

\bib{ledrappier}{article}{
   author={Ledrappier, Fran\c{c}ois},
   title={Une relation entre entropie, dimension et exposant pour certaines
   marches al\'{e}atoires},
   journal={C. R. Acad. Sci. Paris S\'{e}r. I Math.},
   volume={296},
   date={1983},
   number={8},
   pages={369--372}
}

\bib{reyes}{article}{
   author={Oreg\'{o}n-Reyes, Eduardo},
   title={Properties of sets of isometries of Gromov hyperbolic spaces},
   journal={Groups Geom. Dyn.},
   volume={12},
   date={2018},
   number={3},
   pages={889--910}
}

\bib{solomyak}{article}{
   author={Solomyak, Boris},
   author={Takahashi, Yuki},
   title={Diophantine Property of Matrices and Attractors of Projective Iterated Function Systems in $\R\P^1$},
   journal={to appear in Int. Math. Res. Not. IMRN}
   eprint={https://arxiv.org/abs/1902.11059}
}

\bib{vince}{article}{
   author={Vince, Andrew},
   title={M\"{o}bius iterated function systems},
   journal={Trans. Amer. Math. Soc.},
   volume={365},
   date={2013},
   number={1},
   pages={491--509}
}

\bib{yoccoz}{article}{
   author={Yoccoz, Jean-Christophe},
   title={Some questions and remarks about ${\rm SL}(2,\bold R)$ cocycles},
   conference={
      title={Modern dynamical systems and applications},
   },
   book={
      publisher={Cambridge Univ. Press, Cambridge},
   },
   date={2004},
   pages={447--458}
}

\end{biblist}
\end{bibdiv}

\end{document}